\newtheorem{Thm}{Theorem}[section]
\newtheorem{Def}[Thm]{Definition}
\newtheorem{Lem}[Thm]{Lemma}
\newtheorem{Prop}[Thm]{Proposition}
\newtheorem{Kor}[Thm]{Corollary}
\title{A Generalized Vaserstein Symbol}
\author{Tariq Syed  \\
	Fakult\"at f\"ur Mathematik\\
	Universit\"at Duisburg-Essen \\
	Thea-Leymann-Stra{\ss}e 9 \\
	D-45127 Essen \\
	tariq.syed@gmx.de}
\date{\today} 
\begin{document}

\maketitle

\begin{abstract}
Let $R$ be a commutative ring. For any projective $R$-module $P_0$ of constant rank $2$ with a trivialization of its determinant, we define a generalized Vaserstein symbol on the orbit space of the set of epimorphisms $P_0 \oplus R \rightarrow R$ under the action of the group of elementary automorphisms of $P_0 \oplus R$, which maps into the elementary symplectic Witt group. We give criteria for the surjectivity and injectivity of the generalized Vaserstein symbol and deduce that it is an isomorphism if $R$ is a regular Noetherian ring of dimension $2$ or a regular affine algebra of dimension $3$ over a perfect field $k$ with $c.d.(k) \leq 1$ and $6 \in k^{\times}$.
\end{abstract}

\tableofcontents

\section{Introduction}

In this paper, we provide a generalized construction of the Vaserstein symbol, which was originally introduced by Andrei Suslin and Leonid Vaserstein in \cite{SV}. We let $R$ be a commutative ring and we let $\mathit{Um}_{n}(R)$ denote the set of unimodular rows of length $n$, i.e. row vectors $(a_{1}, a_{2}, , ..., a_{n})$ such that $\langle a_{1}, a_{2}, ..., a_{n} \rangle = R$. Such row vectors obviously correspond to epimorphisms $R^{n} \rightarrow R$. Therefore the group $GL_{n} (R)$ of invertible $n\times n$-matrices acts on the right on $\mathit{Um}_{n}(R)$ (by precomposition); consequently the same holds for any subgroup of $GL_{n} (R)$, e.g. the group $SL_{n} (R)$ of invertible $n\times n$-matrices with determinant $1$ or its subgroup $E_{n} (R)$ generated by elementary matrices. Note that the set $\mathit{Um}_{n}(R)$ has a canonical basepoint given by the row $e_{1} = (1, 0,...,0)$.\\
Now let $n=3$ and let $(a_1, a_2, a_3)$ be a unimodular row of length $3$. By definition, there exist elements $b_1,b_2,b_3 \in R$ such that $\sum_{i=1}^{3} a_i b_i = 1$. Therefore the alternating matrix

\begin{center}
$V (a,b) = \begin{pmatrix}
0 & - a_1 & - a_2 & - a_3 \\
a_1 & 0 & - b_3 & b_2 \\
a_2 & b_3 & 0 & - b_1 \\
a_3 & - b_2 & b_1 & 0
\end{pmatrix}$
\end{center}

has Pfaffian $1$ and represents an element of the so-called elementary symplectic Witt group $W_{E} (R)$. It was shown in \cite[Lemma 5.1]{SV} that this element is independent of the choice of the elements $b_1,b_2,b_3$. Furthermore, it follows from \cite[Theorem 5.2(a)]{SV} that this assignment is invariant under the action of $E_{3} (R)$ on $\mathit{Um}_{3}(R)$. Therefore one obtains a well-defined map

\begin{center}
$V: \mathit{Um}_{3}(R)/E_{3} (R) \rightarrow W_{E} (R)$
\end{center}

called the Vaserstein symbol. Suslin and Vaserstein also found criteria for this map to be surjective or injective in terms of the right action of $E_{n}(R)$ on $\mathit{Um}_{n}(R)$ mentioned above. More precisely, they proved that the Vaserstein symbol is surjective if $\mathit{Um}_{2n+1}(R) = e_{1} E_{2n+1}(R)$ for $n \geq 2$ (cf. \cite[Theorem 5.2(b)]{SV}) and injective if $e_{1} E_{2n} = e_{1} (E(R) \cap GL_{2n}(R))$ for $n \geq 3$ and $E(R) \cap GL_{4}(R) = E_{4}(R)$ (combine \cite[Theorem 5.2(c)]{SV} and the proof of \cite[Corollary 7.4]{SV}).\\
These criteria immediately enabled them to deduce that the Vaserstein symbol is a bijection for a Noetherian ring of Krull dimension $2$ (cf. \cite[Corollary 7.4]{SV}). Using local-global principles, Rao and van der Kallen could prove in \cite[Corollary 3.5]{RvdK} that the Vaserstein symbol is also a bijection for a $3$-dimensional regular affine algebra over a field $k$ with $c.d. (k) \leq 1$, which is supposed to be perfect if $char (k) = 2,3$.\\
The Vaserstein symbol plays an important role in the study of stably free modules of rank $2$ (cf. \cite{HB2}, \cite{F}). Indeed, the orbit space $\mathit{Um}_{3}(R)/E_{3} (R)$ naturally surjects onto the set of isomorphism classes of projective $R$-modules of rank $2$ which become free after adding a free direct summand of rank $1$ (cf. Section \ref{2.4}). In \cite[Theorem 7.5]{FRS}, the Vaserstein symbol was crucially used in order to prove that stably free modules of rank $d-1$ over smooth affine $k$-algebras of dimension $d \geq 3$ are free whenever $k$ is algebraically closed and $(d-1)! \in k^{\times}$: By reducing to the case of a threefold and by using the result of Rao and van der Kallen mentioned in the previous paragraph, it was proven that any unimodular row of length $d$ can be transformed via elementary matrices to a row of the form $(a_{1},a_{2},...,a_{d}^{(d-1)!})$. Then Suslin's theorem that any such row can be completed to an invertible matrix (cf. \cite[Theorem 2]{S2}) implied the result.\\
While projective modules of rank $\geq d$ are cancellative in the situation of \cite[Theorem 7.5]{FRS}, the same is not true in general for projective modules of rank $d-2$ (cf. \cite{NMK}). In particular, stably isomorphic projective modules of rank $2$ over smooth affine fourfolds over algebraically closed fields need not be isomorphic in general.\\
Our work on the generalization of the Vaserstein symbol is substantially motivated by the study of projective modules as described in the previous paragraphs: The generalized Vaserstein symbol will lead to a conceptual explanation for the failure of the cancellation property of projective modules of rank $2$ with trivial determinant over smooth affine fourfolds over algebraically closed fields. By generalizing the approach in \cite{FRS}, we also foresee that the generalized Vaserstein symbol will be an important tool in order to study the cancellation property of projective modules of rank $d-1$ with trivial determinant over smooth affine algebras of dimension $d$ over an algebraically closed field $k$ with $(d-1)! \in k^{\times}$. To keep the length of this paper reasonable, the discussion of these major applications is deferred to subsequent work. Our results in this paper are as follows:\\
First, recall from \cite{SV} that the elementary symplectic Witt group $W_{E} (R)$ is defined as a subgroup of a larger group usually denoted $W'_{E} (R)$, which we will define in Section \ref{3.1}. The group $W'_E (R)$ is generated by alternating invertible matrices and $W_{E} (R)$ then corresponds to its subgroup generated by matrices with Pfaffian $1$. It is known that the group $W'_{E} (R)$ is isomorphic to the higher Grothendieck-Witt group $GW_{1}^{3} (R)$ and also to the group $V (R)$ (cf. \cite{FRS}, see Section \ref{3.2} below). The latter group is generated by isometry classes of triples $(P, g, f)$, where $P$ is a finitely generated projective $R$-module and $f$ and $g$ are alternating isomorphisms on $P$ (or, equivalently, non-degenerate alternating forms on $P$). Under the isomorphism $W'_{E} (R) \cong V (R)$, the group $W_{E} (R)$ then corresponds to a subgroup of $V (R)$. We denote this subgroup by $\tilde{V} (R)$.\\
Now let $P_{0}$ be a finitely generated projective $R$-module of rank $2$ with a fixed trivialization $\theta_{0}: R \xrightarrow{\cong} \det (P_{0})$ of its determinant. We denote by $Um (P_{0} \oplus R)$ the set of epimorphisms $P_{0} \oplus R \rightarrow R$ and by $E (P_{0} \oplus R)$ the group of elementary automorphisms of $P_{0} \oplus R$. Any element $a: P_{0} \oplus R \rightarrow R$ of $Um (P_{0} \oplus R)$ has a section $s: R \rightarrow P_{0} \oplus R$, which canonically induces an isomorphism $i: P_{0} \oplus R \xrightarrow{\cong} P(a) \oplus R$, where $P(a) = \ker(a)$. We let $\chi_{0}$ be the alternating form on $P_{0}$ which sends a pair $(p,q)$ to the element $\theta_{0}^{-1} (p \wedge q)$ of $R$; similarly, there is an isomorphism $\theta: R \xrightarrow{\cong} \det (P(a))$ obtained as the composite of $\theta_{0}$ and the isomorphism $\det (P_{0}) \cong \det (P(a))$ induced by $a$ and $s$. We then denote by $\chi_{a}$ the alternating form on $P(a)$ which sends $(p,q)$ to the element $\theta^{-1} (p \wedge q)$ of $R$. We then consider the element

\begin{center}
$V (a) = [P_{0} \oplus R^{2}, \chi_{0} \perp \psi_{2}, {(i \oplus 1)}^{t} (\chi_{a} \perp \psi_{2}) (i \oplus 1)]$
\end{center}

of $V (R)$. Our first result is the following (Theorem \ref{T4.1}, Lemma \ref{L4.2} and Theorem \ref{T4.3} in the text):
\\ \\
\textbf{Theorem 1.} The element $V(a)$ is independent of the choice of a section $s$ of $a \in Um (P_{0} \oplus R)$ and is an element of $\tilde{V} (R)$. Furthermore, we have $V (a) = V (a \varphi)$ in $V (R)$ for all $a \in Um (P_{0} \oplus R)$ and $\varphi \in E (P_{0} \oplus R)$. Thus, the assignment above descends to a well-defined map $V: Um (P_{0} \oplus R)/E (P_{0} \oplus R) \rightarrow \tilde{V} (R)$, which we call the generalized Vaserstein symbol (associated to the trivialization $\theta_{0}$ of $\det (P_{0})$).
\\ \\
The terminology is justified by the following observation: If we take $P_{0} = R^{2}$ and let $e_{1} = (1,0)$ and $e_{2} = (0,1)$, then it is well-known that there is a canonical isomorphism $\theta_{0}: R \xrightarrow{\cong} \det(R^{2})$ given by $1 \mapsto e_{1} \wedge e_{2}$. Then the generalized Vaserstein symbol associated to $- \theta_{0}$ coincides with the usual Vaserstein symbol via the identification $\tilde{V} (R) \cong W_{E} (R)$ mentioned above.\\
Of course, any two trivializations of $\det (P_{0})$ are equal up to multiplication by a unit of $R$. We will actually make precise how the generalized Vaserstein symbol depends on the choice of a trivialization of $\det (P_{0})$ by means of a canonical $R^{\times}$-action on $V (R)$.\\
We also generalize the criteria found by Suslin and Vaserstein on the injectivity and surjectivity of the Vaserstein symbol. For this, let $P_{n} = P_{0} \oplus R^{n-2}$ for all $n \geq 3$ and let $E_{\infty} (P_{0})$ be the direct limit of the groups $E (P_{n})$ for $n \geq 3$. Note that $Um (P_{n})$ has a canonical basepoint given by the projection $\pi_{n,n}$ onto the "last" free direct summand of rank $1$. We then prove (Theorem \ref{T4.5} and Theorem \ref{T4.14} in the text):
\\ \\
\textbf{Theorem 2.} The Vaserstein symbol $V: Um (P_{0} \oplus R)/E (P_{0} \oplus R) \rightarrow \tilde{V} (R)$ is surjective if $Um (P_{2n+1}) = \pi_{2n+1,2n+1} E (P_{2n+1})$ for all $n \geq 2$. Furthermore, it is injective if $\pi_{2n,2n} E (P_{2n}) = \pi_{2n,2n} (E_{\infty} (P_{0}) \cap Aut (P_{2n}))$ for all $n \geq 3$ and $E_{\infty} (P_{0}) \cap Aut (P_{4}) = E (P_{4})$.
\\ \\
Using local-global principles for transvection groups (cf. \cite{BBR}), we may then prove the following result (Theorems \ref{T2.15}, \ref{T2.16} and \ref{T4.15} in the text):
\\ \\
\textbf{Theorem 3.} The equality $E_{\infty} (P_{0}) \cap Aut (P_{4}) = E (P_{4})$ holds if $R$ is a $2$-dimensional regular Noetherian ring or if $R$ is a $3$-dimensional regular affine algebra over a perfect field $k$ such that $c.d.(k) \leq 1$ and $6 \in k^{\times}$. In particular, it follows that the generalized Vaserstein symbol $V: Um (P_{0} \oplus R)/E (P_{0} \oplus R) \rightarrow \tilde{V} (R)$ is a bijection in these cases.
\\ \\
As indicated above, the corresponding result by Rao and van der Kallen for the usual Vaserstein symbol in dimension $3$ and Suslin's theorem on the completability of unimodular rows were crucially used in the proof of \cite[Theorem 7.5]{FRS}. As a special case of Suslin's theorem, one obtains that unimodular rows of the form $(a_{1}, a_{2}, a_{3}^{2})$ are completable to invertible matrices. An explicit completion of such a unimodular row is given e.g. in \cite{Kr}. In fact, we can translate this result to our setting: Any epimorphism $a: P_{0} \oplus R \rightarrow R$ can be written as $(a_{0}, a_{R})$, where $a_{0}$ and $a_{R}$ are the restrictions of $a$ to $P_{0}$ and $R$ respectively. Then we can generalize Krusemeyer's construction in order to give an explicit completion of an epimorphism of the form $a= (a_{0}, a_{R}^{2})$ to an automorphism of $P_{0} \oplus R$ (see Proposition \ref{P4.18}).\\
While it should be possible to define a Vaserstein symbol without the assumption of a trivial determinant of $P_{0}$, it is by no means obvious that our methods in this paper can be adjusted in order to prove the same results without this assumption.\\
The organization of the paper is as follows: In Section \ref{2}, we basically prove the technical ingredients for the proofs of the main results of this paper. In particular, we prove some lemmas on elementary automorphisms of projective modules and use local-global principles for transvection groups in order to derive stability results for automorphism groups of projective modules. Section \ref{3} basically covers the definition of the elementary symplectic Witt group $W_{E} (R)$ and the identifications of $W'_{E} (R)$, $V(R)$ and $GW_{1}^{3} (R)$. In Section \ref{4}, we motivate and give the definition of the generalized Vaserstein symbol and begin to study its basic properties. We will then use all the technical lemmas proven in the previous sections in order to deduce the theorems stated above.

\subsection*{Notation and Conventions}

In this paper, a ring $R$ will always be commutative with unit. If $k$ is a perfect field, we will denote by $\mathcal{H} (k)$ the $\mathbb{A}^{1}$-homotopy category as defined by Morel and Voevodsky and by $\mathcal{H}_{\bullet} (k)$ its pointed version. If $\mathcal{X}$ and $\mathcal{Y}$ are spaces (resp. pointed spaces), we will write $[\mathcal{X},\mathcal{Y}]_{\mathbb{A}^{1}}$ (resp. $[\mathcal{X},\mathcal{Y}]_{\mathbb{A}^{1}, \bullet}$) for the set of morphisms from $\mathcal{X}$ to $\mathcal{Y}$ in $\mathcal{H} (k)$ (resp. $\mathcal{H}_{\bullet} (k)$).

\subsection*{Acknowledgements}

The author would like to thank his PhD advisor Andreas Rosenschon for a general introduction to the subject of his thesis, for many helpful discussions and for his support. The author would also like to thank his PhD advisor Jean Fasel for suggesting to work on the generalization of the Vaserstein symbol, for providing the main idea of this work and for many helpful discussions. Furthermore, he would like to thank Anand Sawant for many helpful discussions around algebraic and Hermitian K-theory and motivic homotopy theory and Ravi Rao for helpful comments on the local-global principle for transvection groups. Finally, the author would like to thank the anonymous referees for suggesting changes which helped to avoid the assumption that $2 \in R^{\times}$ in an earlier version of the paper and which greatly improved the exposition.\\
The author was funded by a grant of the DFG priority program 1786 "Homotopy theory and algebraic geometry".

\section{Preliminaries on projective modules}\label{Preliminaries on projective modules}\label{2}

In this section, we recall some basic facts on projective modules over commutative rings and prove some technical lemmas on elementary automorphisms which will be crucially used in the proofs of the main results of this paper. We also recall the local-global principle for transvection groups in \cite{BBR} in order to prove a stability result on automorphisms of projective modules. At the end of this section, we briefly recall how projective modules stably isomorphic to a given projective module $P$ can be classified in terms of the orbit space of the set of epimorphisms $P \oplus R \rightarrow R$ under the action of the groups of automorphisms of $P \oplus R$.

\subsection{Local trivializations and alternating isomorphisms on projective modules}\label{Local trivializations and alternating isomorphisms on projective modules}

Let $R$ be a commutative ring and $P$ be any projective $R$-module. For any prime ideal $\mathfrak{p}$ of $R$, the localized $R_{\mathfrak{p}}$-module $P_{\mathfrak{p}}$ is again projective and therefore free (because projective modules over local rings are free). In this weak sense, projective modules are locally free. If the rank of $P_{\mathfrak{p}}$ as an $R_{\mathfrak{p}}$-module is finite for every prime $\mathfrak{p}$, then we say that $P$ is a projective module of finite rank. In this case, there is a well-defined map $rank_P: Spec (R) \rightarrow \mathbb{Z}$ which sends a prime ideal $\mathfrak{p}$ of $R$ to the rank of $P_{\mathfrak{p}}$ as an $R_{\mathfrak{p}}$-module. It is not true in general that projective modules of finite rank are finitely generated; nevertheless, this is true if $rank_P$ is a constant map (cf. \cite[Chapter I, Exercise 2.14]{W}). We will say that $P$ is locally free of finite rank (in the strong sense) if it admits elements $f_1,..., f_n  \in R$ generating the unit ideal such that the localizations $P_{f_k}$ are free $R_{f_k}$-modules of finite rank. In fact, it is well-known that this is true if and only if $P$ is a finitely generated projective module. The following lemma follows from \cite[Chapter I, Lemma 2.4]{W} and \cite[Chapter I, Ex. 2.11]{W}:

\begin{Lem}\label{L2.1}
Let $R$ be a ring and $M$ be an $R$-module. Then the following statements are equivalent:
\begin{itemize}
\item[a)] $M$ is a finitely generated projective $R$-module;
\item[b)] $M$ is locally free of finite rank (in the strong sense);
\item[c)] $M$ is a finitely presented $R$-module and $M_{\mathfrak{p}}$ is a free $R_{\mathfrak{p}}$-module for every prime ideal $\mathfrak{p}$ of $R$;
\item[d)] $M$ is a finitely generated $R$-module, $M_{\mathfrak{p}}$ is a free $R_{\mathfrak{p}}$-module for every prime ideal $\mathfrak{p}$ of $R$ and the induced map $rank_M: Spec (R) \rightarrow \mathbb{Z}$ is continuous.
\end{itemize}
\end{Lem}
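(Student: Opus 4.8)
The plan is to prove the equivalence by running the cycle (a) $\Rightarrow$ (b) $\Rightarrow$ (c) $\Rightarrow$ (a) and then closing the loop with (d) via (b) $\Rightarrow$ (d) $\Rightarrow$ (b). Two elementary facts about modules over a commutative ring will be used repeatedly: a surjective $R$-linear map between finitely generated free modules of the same finite rank is an isomorphism (the determinant trick / Cayley--Hamilton applied to the induced surjective endomorphism), and a module all of whose localizations at prime ideals vanish is zero. The common mechanism is the familiar ``lift a local basis'' construction, and the only real subtlety is keeping track of which hypothesis controls the kernel of the lifting map in each case.

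For (a) $\Rightarrow$ (b): fix a prime $\mathfrak{p}$, choose elements $m_{1},\dots,m_{r}\in M$ whose images form a basis of the free module $M_{\mathfrak{p}}$, and let $\varphi\colon R^{r}\to M$ be the resulting map. Its cokernel is finitely generated (a quotient of $M$) and vanishes at $\mathfrak{p}$, so $\varphi_{f_{1}}$ is surjective for some $f_{1}\notin\mathfrak{p}$. Since $M_{f_{1}}$ is projective, $\varphi_{f_{1}}$ splits, giving $R_{f_{1}}^{r}\cong M_{f_{1}}\oplus K$ with $K$ finitely generated; because $\varphi_{\mathfrak{p}}$ is a surjection between free $R_{\mathfrak{p}}$-modules of rank $r$, it is an isomorphism, so $K_{\mathfrak{p}}=0$ and hence $K_{f_{2}}=0$ for some $f_{2}\notin\mathfrak{p}$. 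Thus $M_{f_{1}f_{2}}$ is free of finite rank. Letting $\mathfrak{p}$ vary, the elements $f_{1}f_{2}$ cannot all lie in a common maximal ideal, so they generate the unit ideal; quasi-compactness of $Spec(R)$ extracts a finite subfamily, which is exactly (b).

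For (b) $\Rightarrow$ (c): patching the free presentations of the $M_{f_{k}}$ over the covering $D(f_{k})$ shows $M$ is finitely presented, and each $M_{\mathfrak{p}}$ is a localization of some free $M_{f_{k}}$, hence free. For (c) $\Rightarrow$ (a): since $M_{\mathfrak{p}}$ is free for every $\mathfrak{p}$ and flatness is a local property, $M$ is flat; a finitely presented flat module over a commutative ring is finitely generated projective (for instance, by Lazard's theorem $M$ is a filtered colimit of finitely generated free modules, and finite presentation forces the identity of $M$ to factor through one of these, exhibiting $M$ as a retract of a finitely generated free module). This closes the cycle (a) $\Leftrightarrow$ (b) $\Leftrightarrow$ (c).

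Finally, (b) $\Rightarrow$ (d) is immediate: $M$ is finitely generated and stalk-wise free, and $rank_{M}$ is constant on each of the finitely many basic opens on which $M$ is free, hence locally constant, hence continuous into the discrete set $\mathbb{Z}$. For (d) $\Rightarrow$ (b) I would repeat the lifting construction of the second paragraph, but \emph{without} projectivity available to split $\varphi_{f_{1}}$; instead, after replacing $D(f_{1})$ by a smaller basic open on which $rank_{M}$ is identically $r$, for every prime $\mathfrak{q}\in D(f_{1})$ the map $\varphi_{\mathfrak{q}}\colon R_{\mathfrak{q}}^{r}\to M_{\mathfrak{q}}$ is a surjection between free modules of rank $r$ and so an isomorphism; hence $\ker(\varphi_{f_{1}})$ has all localizations zero and is therefore zero, so $M_{f_{1}}$ is free of rank $r$. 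The main obstacle is precisely this point: in (a) the splitting of the lift is what kills the superfluous part, in (c) it is finite presentation (which makes the kernel finitely generated and lets one pass from a single prime to a neighbourhood), and in (d) it is the local constancy of the rank; getting the right one of these to do the work in each implication, while everything else is formal, is the crux.
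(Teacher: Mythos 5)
Your proof is correct, and each implication is handled with the right tool: splitting of the lift in (a)$\Rightarrow$(b), flatness plus Lazard/finite presentation in (c)$\Rightarrow$(a), and local constancy of the rank to kill the kernel in (d)$\Rightarrow$(b). The paper itself does not prove this lemma at all --- it is simply cited from Weibel's K-book (Chapter I, Lemma 2.4 and Exercise 2.11) --- so there is no argument to compare against; what you have written is the standard proof those references contain. The only step you compress is the claim in (b)$\Rightarrow$(c) that finite presentation can be ``patched'' over a finite cover by basic opens $D(f_k)$ with $(f_1,\dots,f_n)=R$; this is a genuine (standard) fact --- finite generation descends by clearing denominators and using a partition of unity, and then the kernel of a resulting surjection $R^n\to M$ is finitely generated because each $(\ker)_{f_k}$ is --- and it would be worth one sentence to say so, but it is not a gap.
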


For any projective $R$-module $P$ of finite rank, there is a canonical isomorphism 

\begin{center}
$can: P \rightarrow P^{\vee\vee}$, $p \mapsto (ev_{p}: P^{\vee} \rightarrow R, a \mapsto a(p))$
\end{center}

induced by evaluation. A symmetric isomorphism on $P$ is an isomorphism $f: P \rightarrow P^{\vee}$ such that the diagram

\begin{center}
$\begin{xy}
  \xymatrix{
      P \ar[r]^{f} \ar[d]_{can}    &   P^{\vee} \ar@2{-}[d]^{id}  \\
      P^{\vee \vee} \ar[r]_{f^{\vee}}             &   P^{\vee}   
  }
\end{xy}$
\end{center}

\noindent is commutative. Similarly, a skew-symmetric isomorphism on $P$ is an isomorphism $f: P \rightarrow P^{\vee}$ such that the diagram

\begin{center}
$\begin{xy}
  \xymatrix{
      P \ar[r]^{f} \ar[d]_{-can}    &   P^{\vee} \ar@2{-}[d]^{id}  \\
      P^{\vee\vee} \ar[r]_{f^{\vee}}             &   P^{\vee}   
  }
\end{xy}$
\end{center}

\noindent is commutative. Finally, an alternating isomorphism on $P$ is an isomorphism $f: P \rightarrow P^{\vee}$ such that $f(p)(p) = 0$ for all $p \in P$.\\
A symmetric form on a projective $R$-module $P$ of finite rank is an $R$-bilinear map $\chi : P \times P \rightarrow R$ such that $\chi (p,q) = \chi (q,p)$ for all $p,q \in P$. Similarly, a skew-symmetric form on a projective $R$-module $P$ of finite rank is an $R$-bilinear map $\chi: P \times P \rightarrow R$ such that $\chi (p,q) = - \chi (q,p)$ for all $p,q \in P$. Moreover, an alternating form on a projective $R$-module $P$ of finite rank is an $R$-bilinear map $\chi: P \times P \rightarrow R$ such that $\chi (p,p) = 0$ for all $p \in P$. Note that any alternating form on $P$ is automatically skew-symmetric. If $2 \in R^{\times}$, any skew-symmetric form is alternating as well. A (skew-)symmetric or alternating form $\chi$ is non-degenerate if the induced map $P \rightarrow P^{\vee}, q \mapsto (p \mapsto \chi(p,q))$ is an isomorphism. Obviously, the data of a non-degenerate (skew-)symmetric form is equivalent to the data of a (skew-)symmetric isomorphism. Analogously, the data of a non-degenerate alternating form is equivalent to the data of an alternating isomorphism.\\
Now let $\chi: M \times M \rightarrow R$ be any $R$-bilinear form on $M$. This form induces a homomorphism $M \otimes_{R} M \rightarrow R$. For any prime $\mathfrak{p}$ of $R$, there is an induced homomorphism $M_{\mathfrak{p}} \otimes_{R_\mathfrak{p}} M_{\mathfrak{p}} \cong {(M \otimes_R M)}_{\mathfrak{p}} \rightarrow R_{\mathfrak{p}}$. This gives an $R$-bilinear form $\chi_{\mathfrak{p}} : M_{\mathfrak{p}} \times M_{\mathfrak{p}} \rightarrow R_{\mathfrak{p}}$ on $M_{\mathfrak{p}}$. The following lemma shows that these localized forms completely determine $\chi$:
 
\begin{Lem}\label{L2.2}
If $\chi_1$ and $\chi_2$ are $R$-bilinear forms on an $R$-module $M$. Then $\chi_1 = \chi_2$ if and only if ${\chi_1}_{\mathfrak{p}} = {\chi_2}_{\mathfrak{p}}$ for every prime ideal $\mathfrak{p}$ of $R$.
\end{Lem}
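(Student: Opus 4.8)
The plan is to reduce everything to the elementary fact that an element of a ring is zero if and only if it vanishes in every localization at a prime (equivalently, maximal) ideal. The ``only if'' direction is immediate: if $\chi_1 = \chi_2$, then applying the localization functor at $\mathfrak{p}$ to the induced homomorphism $M \otimes_R M \rightarrow R$ shows ${\chi_1}_{\mathfrak{p}} = {\chi_2}_{\mathfrak{p}}$ for every prime ideal $\mathfrak{p}$ of $R$.

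For the converse, set $\delta = \chi_1 - \chi_2$, which is again an $R$-bilinear form on $M$, and observe that $\delta_{\mathfrak{p}} = {\chi_1}_{\mathfrak{p}} - {\chi_2}_{\mathfrak{p}} = 0$ for every prime $\mathfrak{p}$, since localization is additive. It therefore suffices to show that an $R$-bilinear form $\delta$ on $M$ with $\delta_{\mathfrak{p}} = 0$ for all primes $\mathfrak{p}$ is identically zero. Fix arbitrary $p, q \in M$; I claim $\delta(p,q) = 0$ in $R$. Under the canonical isomorphism $(M \otimes_R M)_{\mathfrak{p}} \cong M_{\mathfrak{p}} \otimes_{R_{\mathfrak{p}}} M_{\mathfrak{p}}$ recalled above, the localized form $\delta_{\mathfrak{p}}$ sends the pair $(p/1, q/1)$ to the image of $\delta(p,q)$ in $R_{\mathfrak{p}}$; since $\delta_{\mathfrak{p}} = 0$, this image is $0$ for every prime $\mathfrak{p}$.

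Finally I invoke the standard local-global principle for $R$ itself: an element $r \in R$ with $r/1 = 0$ in $R_{\mathfrak{p}}$ for every prime (indeed, for every maximal) ideal $\mathfrak{p}$ must be $0$, since otherwise its annihilator would be a proper ideal contained in some maximal ideal $\mathfrak{m}$, contradicting $r/1 = 0$ in $R_{\mathfrak{m}}$. Applying this to $r = \delta(p,q)$ gives $\delta(p,q) = 0$, and as $p, q$ were arbitrary we conclude $\delta = 0$, i.e. $\chi_1 = \chi_2$. I expect no genuine obstacle here; the only point requiring a little care is the bookkeeping that identifies $\delta_{\mathfrak{p}}(p/1, q/1)$ with the image of $\delta(p,q)$ in $R_{\mathfrak{p}}$, which is precisely the compatibility $(M \otimes_R M)_{\mathfrak{p}} \cong M_{\mathfrak{p}} \otimes_{R_{\mathfrak{p}}} M_{\mathfrak{p}}$ already noted preceding the statement.
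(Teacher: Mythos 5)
Your proof is correct and follows essentially the same route as the paper: reduce to showing $\chi_1(p,q) - \chi_2(p,q) = 0$ for all $p,q \in M$ and invoke the fact that being zero is a local property of ring (or module) elements. You simply spell out the compatibility of localization with the tensor product, which the paper leaves implicit.
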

 
\begin{proof}
The forms $\chi_1$ and $\chi_2$ agree if and only if $\chi_1 (p,q) - \chi_2 (p,q) = 0$ for all $p,q \in M$. Therefore the lemma follows immediately from the fact that being $0$ is a local property for elements of any $R$-module.
\end{proof}

\subsection{Elementary automorphisms and unimodular elements}\label{Elementary automorphisms and unimodular elements}\label{2.2}

Again, let $R$ be a ring and let $M \cong \bigoplus_{i=1}^{n} M_{i}$ be an R-module which admits a decomposition into a \mbox{direct} sum of R-modules $M_{i}$, $i=1,...,n$. An elementary automorphism $\varphi$ of $M$ with respect to the given decomposition is an endomorphism of the form $\varphi_{s_{ij}} = id_M + s_{ij}$, where $s_{ij}: M_{j} \rightarrow M_{i}$ is an R-linear homomorphism for some $i \neq j$ (cf. \cite[Chapter IV, \S 3]{HB}). Any such homomorphism automatically is an isomorphism with inverse given by $\varphi_{s_{ij}}^{-1} = id_M - s_{ij}$. For $M=R^{n} \cong \bigoplus_{i=1}^{n} R$, one just obtains the automorphisms given by elementary matrices. We denote by $Aut (M)$ the group of automorphisms of $M$ and by $E(M_1,..., M_n)$ (or simply $E(M)$ if the decomposition is understood) the subgroup of $Aut(M)$ generated by elementary automorphisms.\\
The following lemma gives a list of useful formulas which can be checked easily by direct computation:
 
\begin{Lem}\label{L2.3}
Let $M = \bigoplus_{i=1}^{n} M_{i}$ be a direct sum of $R$-modules. Then we have
 
\begin{itemize}
\item[a)] $\varphi_{s_{ij}} \varphi_{t_{ij}} = \varphi_{(s_{ij} + t_{ij})}$ for all $s_{ij}: M_{j} \rightarrow M_{i}$, $t_{ij}: M_{j} \rightarrow M_{i}$ and $i \neq j$;
\item[b)] $\varphi_{s_{ij}} \varphi_{s_{kl}} = \varphi_{s_{kl}} \varphi_{s_{ij}}$ for all $s_{ij}: M_{j} \rightarrow M_{i}$, $s_{kl}: M_{l} \rightarrow M_{k}$, $i \neq j$, $k \neq l$, $j \neq k$ and $i \neq l$;
\item[c)] $\varphi_{s_{ij}} \varphi_{s_{jk}} \varphi_{- s_{ij}} \varphi_{- s_{jk}} = \varphi_{(s_{ij} s_{jk})}$ for all $s_{ij}: M_{j} \rightarrow M_{i}$, $s_{jk}: M_{k} \rightarrow M_{j}$ and distinct $i,j,k$;
\item[d)] $\varphi_{s_{ij}} \varphi_{s_{ki}} \varphi_{- s_{ij}} \varphi_{- s_{ki}} = \varphi_{(- s_{ki} s_{ij})}$ for all $s_{ij}: M_{j} \rightarrow M_{i}$, $s_{ki}: M_{i} \rightarrow M_{k}$ and distinct $i,j,k$.
\end{itemize}
\end{Lem}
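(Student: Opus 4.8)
The plan is to reduce all four identities to a single composition rule for the extended homomorphisms, after which each of a)--d) becomes a two-line expansion. Given the decomposition $M = \bigoplus_{i=1}^{n} M_i$, I regard a homomorphism $s_{ij}: M_j \to M_i$ as an endomorphism of $M$ by precomposing with the projection onto $M_j$ and postcomposing with the inclusion of $M_i$; thus $s_{ij}$ annihilates every summand $M_\ell$ with $\ell \neq j$ and has image in $M_i$, so that $\varphi_{s_{ij}} = \mathrm{id}_M + s_{ij}$ and $\varphi_{s_{ij}}^{-1} = \mathrm{id}_M - s_{ij}$. The key observation is the composition rule: for $s_{ij}: M_j \to M_i$ and $s_{kl}: M_l \to M_k$, the composite $s_{ij} \circ s_{kl}$ of endomorphisms of $M$ is the extension by zero of $s_{ij} \circ s_{kl}: M_l \to M_k \to M_i$ if $j = k$, and is $0$ if $j \neq k$ (because the image of $s_{kl}$ lies in $M_k$ while $s_{ij}$ vanishes off $M_j$); in particular $s_{ij} \circ s_{ij} = 0$ whenever $i \neq j$. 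All four identities then follow by expanding products of sums of such pieces and discarding the terms killed by this rule.

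Parts a) and b) are immediate: $\varphi_{s_{ij}} \varphi_{t_{ij}} = \mathrm{id}_M + s_{ij} + t_{ij} + s_{ij} \circ t_{ij}$ and the last term vanishes since $i \neq j$, giving a); while both $\varphi_{s_{ij}} \varphi_{s_{kl}}$ and $\varphi_{s_{kl}} \varphi_{s_{ij}}$ expand to $\mathrm{id}_M + s_{ij} + s_{kl}$, because the cross term $s_{ij} \circ s_{kl}$ vanishes ($j \neq k$) and the cross term $s_{kl} \circ s_{ij}$ vanishes ($i \neq l$), giving b).

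For the commutator identities c) and d) I would set $a = s_{ij}$ and $b = s_{jk}$ (respectively $a = s_{ij}$, $c = s_{ki}$) and expand $(\mathrm{id}_M + a)(\mathrm{id}_M + b)(\mathrm{id}_M - a)(\mathrm{id}_M - b)$. Since $i, j, k$ are distinct we have $a \circ a = b \circ b = 0$, and in c) moreover $b \circ a = 0$ while $a \circ b: M_k \to M_i$ is the surviving composite; then $(\mathrm{id}_M + a)(\mathrm{id}_M + b) = \mathrm{id}_M + a + b + a \circ b$ and $(\mathrm{id}_M - a)(\mathrm{id}_M - b) = \mathrm{id}_M - a - b + a \circ b$, and in their product every remaining higher composite contains a vanishing pair, so it collapses to $\mathrm{id}_M + a \circ b = \varphi_{s_{ij} s_{jk}}$. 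Part d) is the same computation, except that here $a \circ c = 0$ whereas $c \circ a: M_j \to M_k$ survives, so $(\mathrm{id}_M + a)(\mathrm{id}_M + c) = \mathrm{id}_M + a + c$ and the product telescopes to $\mathrm{id}_M - c \circ a = \varphi_{-s_{ki} s_{ij}}$, which is precisely the signed term in the statement.

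There is no genuine obstacle; the only points requiring care are the order of composition --- the paper writes $s_{ij} s_{jk}$ for $s_{ij} \circ s_{jk}$, so that in c) the surviving term is $a \circ b$ rather than $b \circ a$ --- and the lone minus sign in d), which appears precisely because in that commutator it is $c \circ a$, and not $a \circ c$, that fails to vanish. Once the composition rule above is in place, each of a)--d) is a short expansion.
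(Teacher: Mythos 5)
Your proposal is correct and matches the paper's intent exactly: the paper offers no written proof, stating only that the formulas ``can be checked easily by direct computation,'' and your argument is precisely that computation, organized around the correct vanishing rule $s_{ij} \circ s_{kl} = 0$ for $j \neq k$. All four expansions, including the sign in d), check out.
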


If we restrict to the case $M_i = M_n$ for $i \geq 2$, we obtain the following result on $E (M)$:

\begin{Kor}\label{C2.4}
If $M_i = M_n$ for $i \geq 2$, then the group $E (M)$ is generated by the elementary automorphisms of the form $\varphi_{s} = id_M + s$, where $s$ is an $R$-linear map $M_i \rightarrow M_n$ or $M_n \rightarrow M_i$ for some $i \neq n$. The same statement holds if one replaces $n$ by any other $k \geq 2$.
\end{Kor}

\begin{proof}
Since $M_{i} = M_{n}$ for all $i \geq 2$, we have identities ${id}_{in}: M_{n} \rightarrow M_{i}$ and ${id}_{ni}: M_{i} \rightarrow M_{n}$ for all $i \geq 2$. Let $s_{ij}: M_{j} \rightarrow M_{i}$ be a morphism with $i \neq j$ and therefore either $i \geq 2$ or $j \geq 2$. We may assume that $i,j,n$ are distinct. If $i \geq 2$, then
 
\begin{center}
$\varphi_{s_{ij}} = \varphi_{id_{in}} \varphi_{id_{ni} s_{ij}} \varphi_{- id_{in}} \varphi_{(-{id}_{ni} s_{ij})}$
\end{center}
 
by the third formula in Lemma \ref{L2.3}. If $j \geq 2$, then
 
\begin{center}
$\varphi_{s_{ij}} = \varphi_{(s_{ij} {id}_{jn})} \varphi_{id_{nj}} \varphi_{(- s_{ij} {id}_{jn})} \varphi_{- id_{nj}}$.
\end{center}
 
by the third formula in Lemma \ref{L2.3}. This proves the first part of the corollary. The last part follows in the same way if $n$ is replaced by $k \geq 2$.
\end{proof}

The proof of Corollary \ref{C2.4} also shows:
 
\begin{Kor}\label{C2.5}
Let $M = \bigoplus_{i=1}^{n} M_{i}$ be a direct sum of $R$-modules and also let $s: M_{j} \rightarrow M_{i}$, $i \neq j$, be an $R$-linear map. Assume that there is $k \neq i$ with $M_{k} = M_{i}$ or $k \neq j$ with $M_{k} = M_{j}$. Then the induced elementary automorphism $\varphi_{s}$ is a commutator.
\end{Kor}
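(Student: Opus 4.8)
The plan is to follow, almost verbatim, the computation behind Corollary \ref{C2.4}: the third and fourth formulas of Lemma \ref{L2.3} already rewrite an elementary automorphism of the shape $\varphi_{s_{ij}s_{jk}}$ (respectively $\varphi_{-s_{ki}s_{ij}}$) as a four-fold product $\varphi_{\alpha}\varphi_{\beta}\varphi_{-\alpha}\varphi_{-\beta}=[\varphi_{\alpha},\varphi_{\beta}]$, so the task reduces to factoring the given map $s\colon M_j\to M_i$ through a summand equal to $M_i$ or to $M_j$ and reading off the commutator.

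First I would treat the case in which some index $k\notin\{i,j\}$ satisfies $M_k=M_i$. Then $i,j,k$ are pairwise distinct, and with the identifications $\mathrm{id}_{ik}\colon M_k\to M_i$ and $\mathrm{id}_{ki}\colon M_i\to M_k$ one has $\mathrm{id}_{ik}\circ(\mathrm{id}_{ki}\circ s)=s$. Applying the third formula of Lemma \ref{L2.3} to the triple $(i,k,j)$ then gives
\[
\varphi_s=\varphi_{\mathrm{id}_{ik}}\,\varphi_{\mathrm{id}_{ki}\circ s}\,\varphi_{-\mathrm{id}_{ik}}\,\varphi_{-(\mathrm{id}_{ki}\circ s)}=[\varphi_{\mathrm{id}_{ik}},\varphi_{\mathrm{id}_{ki}\circ s}],
\]
a commutator of two elementary automorphisms. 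Symmetrically, if some $k\notin\{i,j\}$ satisfies $M_k=M_j$, the same formula applied to $(i,k,j)$ together with the factorisation $s=(s\circ\mathrm{id}_{jk})\circ\mathrm{id}_{kj}$ exhibits $\varphi_s$ as $[\varphi_{s\circ\mathrm{id}_{jk}},\varphi_{\mathrm{id}_{kj}}]$; one may equally invoke the fourth formula of Lemma \ref{L2.3} here. This covers every case with $M_i\ne M_j$, and also the case $M_i=M_j$ whenever some further summand equals $M_i$.

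The remaining case is when no summand other than the $i$-th and the $j$-th equals $M_i$ or $M_j$; since the hypothesis of the corollary still supplies some $k\ne i$ with $M_k=M_i$ (or some $k\ne j$ with $M_k=M_j$), we must then have $k=j$ (respectively $k=i$), so that $M_i=M_j$. This includes the case $n=2$, where no third index exists at all. Here I would leave the group $E(M)$: let $D\in Aut(M)$ be the automorphism acting as $-\mathrm{id}$ on the summand $M_i$ and as the identity on every other summand. A one-line computation gives $D\,\varphi_{s'}\,D^{-1}=\varphi_{-s'}$ for every $R$-linear $s'\colon M_j\to M_i$, whence $[D,\varphi_{s'}]=\varphi_{-s'}\varphi_{-s'}=\varphi_{-2s'}$ by the first formula of Lemma \ref{L2.3}. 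Since $2\in R^{\times}$, taking $s'=-\tfrac{1}{2}s$ yields $\varphi_s=[D,\varphi_{-s/2}]$, a commutator in $Aut(M)$.

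I expect this last case to be the only genuine obstacle: with only the two indices $i$ and $j$ available the three-index identities of Lemma \ref{L2.3} do not apply, so one is forced to pass from $E(M)$ to $Aut(M)$ and to bring in the standing assumption $2\in R^{\times}$ through the diagonal automorphism $D$. Incidentally, this last argument uses neither the hypothesis nor the restriction $M_i=M_j$, so under $2\in R^{\times}$ the whole statement in fact follows from it; I would keep the case distinction only to stay parallel to the proof of Corollary \ref{C2.4}, where it produces genuine commutators of elementary automorphisms. Everything else is routine bookkeeping with the formulas of Lemma \ref{L2.3}.
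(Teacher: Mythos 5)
Your proposal is correct, and for the main case it is exactly the paper's argument: the paper proves this corollary by pointing back at the proof of Corollary \ref{C2.4}, i.e.\ by factoring $s$ through the duplicate summand and reading the third (or fourth) identity of Lemma \ref{L2.3} as a commutator $[\varphi_{\alpha},\varphi_{\beta}]=\varphi_{\alpha}\varphi_{\beta}\varphi_{-\alpha}\varphi_{-\beta}$. Your factorizations $s=\mathrm{id}_{ik}\circ(\mathrm{id}_{ki}\circ s)$ and $s=(s\circ\mathrm{id}_{jk})\circ\mathrm{id}_{kj}$ are precisely what that proof does. Where you go beyond the paper is the degenerate case in which the only witness $k$ is $j$ itself (equivalently $M_i=M_j$ with no third coinciding summand, e.g.\ $n=2$): this situation is literally allowed by the hypothesis as stated but is not reached by the three-index identities, and the paper's one-line proof silently ignores it --- harmlessly, since the corollary is only ever invoked (in Lemma \ref{L3.1}) after artificially adding an extra copy of $M_i$, so that a third index is always available. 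Your conjugation trick $D\varphi_{s'}D^{-1}=\varphi_{-s'}$, giving $[D,\varphi_{-s/2}]=\varphi_{s}$, correctly disposes of the remaining case, at the cost of leaving $E(M)$ for $\mathrm{Aut}(M)$ and invoking the standing assumption $2\in R^{\times}$ (which the author states is not needed for most of Section \ref{2}); that cost is unavoidable, since for instance $\begin{pmatrix}1&1\\0&1\end{pmatrix}$ is not a commutator in $GL_2(\mathbb{Z})$, so the degenerate case genuinely fails without some such hypothesis. Your closing observation --- that under $2\in R^{\times}$ the diagonal trick alone proves every elementary automorphism is a commutator in $\mathrm{Aut}(M)$, with no hypothesis on the summands --- is also correct and would even let one skip the summand-adding step in the proof of Lemma \ref{L3.1}; the paper's route has the advantage of producing commutators of elementary automorphisms and of working over arbitrary commutative rings.
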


The following lemma is a version of Whitehead's lemma in our general setting:

\begin{Lem}\label{L2.6}
Let $M = M_{1} \oplus M_{2}$ and let $f: M_{1} \rightarrow M_{2}$, $g: M_{2} \rightarrow M_{1}$ be morphisms. Assume that $id_{M_{1}} + g f$ is an automorphism of $M_{1}$. Then:
\begin{itemize}
\item $id_{M_{2}} + f g$ is an automorphism of $M_{2}$ and
\item $(id_{M_{1}} + g f) \oplus {(id_{M_{2}} + f g)}^{-1}$ is an element of $E (M_{1} \oplus M_{2})$
\end{itemize}
\end{Lem}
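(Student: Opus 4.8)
The plan is to mimic the classical matrix identity that expresses $\mathrm{diag}(u, u^{-1})$ as a product of elementary matrices, carried out with block maps instead of entries. First I would establish that $id_{M_2} + fg$ is an automorphism: setting $u = id_{M_1} + gf$, a direct computation shows that $id_{M_2} - f u^{-1} g$ is a two-sided inverse of $id_{M_2} + fg$. Indeed, $(id_{M_2}+fg)(id_{M_2}-fu^{-1}g) = id_{M_2} + fg - fu^{-1}g - fgfu^{-1}g = id_{M_2} + fg - f(id_{M_1}+gf)u^{-1}g = id_{M_2} + fg - fg = id_{M_2}$, using $u^{-1}$ on the right; the other side is symmetric. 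So $v := id_{M_2}+fg$ is invertible with $v^{-1} = id_{M_2} - f u^{-1} g$.

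Next I would write down the explicit factorization. Working in $Aut(M_1 \oplus M_2)$, consider the elementary automorphisms $\varphi_{f}$ (with $f: M_1 \to M_2$), $\varphi_{g}$ (with $g: M_2 \to M_1$), and their variants with $u^{-1} g$, $-f$, etc. Guided by the standard identity
\[
\begin{pmatrix} u & 0 \\ 0 & v^{-1} \end{pmatrix}
= \begin{pmatrix} 1 & u \\ 0 & 1\end{pmatrix}
\begin{pmatrix} 1 & 0 \\ -v^{-1}g' & 1\end{pmatrix}
\cdots
\]
I would produce a product of four (or six) elementary block-automorphisms that equals $u \oplus v^{-1}$, and verify the identity by multiplying out the block matrices and simplifying using $u = id + gf$, $v = id + fg$, and the formula for $v^{-1}$ above. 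A convenient choice is to first check that $u \oplus v^{-1}$ agrees with $\begin{pmatrix} id & u \\ 0 & id\end{pmatrix}\begin{pmatrix} id & 0 \\ -v^{-1} & id\end{pmatrix}\begin{pmatrix} id & id \\ 0 & id\end{pmatrix}\begin{pmatrix} id & 0 \\ -v^{-1} & id\end{pmatrix}$ after suitable bookkeeping, but the precise intermediate maps matter only insofar as each factor is of the form $id_M + s$ with $s$ going between the two summands, hence lies in $E(M_1 \oplus M_2)$ by definition.

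Since each factor in the resulting product is an elementary automorphism of $M_1 \oplus M_2$ with respect to the decomposition $M = M_1 \oplus M_2$, the product $(id_{M_1}+gf) \oplus (id_{M_2}+fg)^{-1}$ lies in $E(M_1 \oplus M_2)$, which is the second assertion. The main obstacle is purely bookkeeping: one must choose the block maps in the factorization so that every factor is genuinely \emph{elementary} (i.e. off-diagonal) rather than merely invertible, and then check the block-matrix product identity is valid in the noncommutative setting of composable module homomorphisms — so one has to be careful about the order of composition and never commute $f$ past $g$ illegitimately. Once the candidate factorization is written down correctly, the verification is a routine expansion using only $u = id + gf$ and $v^{-1} = id - fu^{-1}g$.
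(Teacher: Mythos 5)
Your first bullet is fine: the explicit inverse $(id_{M_2}+fg)^{-1}=id_{M_2}-f\,(id_{M_1}+gf)^{-1}g$ is correct, and it is a legitimate (arguably cleaner) alternative to the paper's route, which instead deduces invertibility from the identity $id_{M_1}\oplus(id_{M_2}+fg)=\varphi_{-f}\varphi_{-g}\bigl((id_{M_1}+gf)\oplus id_{M_2}\bigr)\varphi_{f}\varphi_{g}$. The problem lies in the second bullet, and it sits exactly at the step you defer to ``bookkeeping.'' Your displayed candidate factorization has off-diagonal blocks equal to $u$, $id$, and $-v^{-1}$; but $M_1$ and $M_2$ are arbitrary, generally non-isomorphic modules, so there is no identity map $M_2\rightarrow M_1$, and $u\colon M_1\rightarrow M_1$, $v^{-1}\colon M_2\rightarrow M_2$ cannot occupy off-diagonal positions. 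The classical Whitehead factorization of $\mathrm{diag}(u,u^{-1})$ depends on both summands being the same module and does not transfer verbatim. Since exhibiting a correctly typed factorization \emph{is} the content of the second assertion, what you have written is a genuine gap, not a routine verification.

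It is a fillable gap, and your overall strategy (an explicit product of off-diagonal elementary automorphisms) is the same as the paper's. The constraint to respect is that every off-diagonal entry must be assembled from the only available maps $f$, $g$, $u^{-1}$, $v^{-1}$. Carrying out the reduction you describe under this constraint gives, with $u=id_{M_1}+gf$ and $v=id_{M_2}+fg$,
\[
u\oplus v^{-1}=\varphi_{g}\,\varphi_{f}\,\varphi_{-u^{-1}g}\,\varphi_{-vf},
\]
where $\varphi_{s}=id_{M}+s$: block multiplication using $(id_{M_1}+gf)u^{-1}=id_{M_1}$ and $v^{-1}=id_{M_2}-fu^{-1}g$ shows that the product of the first three factors is $\left(\begin{smallmatrix} u & 0 \\ f & v^{-1}\end{smallmatrix}\right)$, and the last factor clears the remaining corner. (The paper instead verifies the five-factor identity $u\oplus v^{-1}=\varphi_{-g}\varphi_{-f}\varphi_{g}\varphi_{u^{-1}g-g}\varphi_{fgf+f}$.) Once such a correctly typed factorization is in hand, your conclusion that $u\oplus v^{-1}\in E(M_1\oplus M_2)$ follows as you say.
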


\begin{proof}
We have $id_{M_{1}} \oplus (id_{M_{2}} + f g) = {\varphi}_{-f} {\varphi}_{-g} {((id_{M_{1}} + g f) \oplus id_{M_{2}})} {\varphi}_{f} {\varphi}_{g}$. This shows the first statement. For the second statement one checks that

\begin{center}
$(id_{M_{1}} + g f) \oplus {(id_{M_{2}} + f g)}^{-1} = {\varphi}_{-g} {\varphi}_{-f} {\varphi}_{g} {\varphi}_{{(id_{M_{1}} + g f)}^{-1} g - g} {\varphi}_{f g f + f}$.
\end{center}

So $(id_{M_{1}} + g f) \oplus {(id_{M_{2}} + f g)}^{-1}$ lies in $E (M_{1} \oplus M_{2})$.
\end{proof}

Now let $P$ be a finitely generated projective $R$-module. We denote by $Um(P)$ the set of epimorphisms $P \rightarrow R$. The group $Aut(P)$ of automorphisms of $P$ then acts on the right on $Um(P)$; consequently, the same holds for any subgroup of $Aut (P)$. In particular, it holds for the subgroup $SL (P)$ of automorphisms of determinant $1$ and, if we fix a decomposition $P \cong \bigoplus_{i=1}^{n} P_{i}$, for the group $E (P) = E(P_1,...,P_n)$ as well.\\
An element $p \in P$ is called unimodular if there is an $a \in Um (P)$ such that $a (p) = 1$; this means that the morphism $R \rightarrow P, 1 \mapsto p$ defines a section for the epimorphism $a$. We denote by $Unim.El. (P)$ the set of unimodular elements of $P$. Note that the group $Aut (P)$ and hence also $SL (P)$ and $E (P)$ act on the left on $P$; these actions restrict to actions on $Unim.El. (P)$.\\
The canonical isomorphism $can: P \rightarrow P^{\vee\vee}$ identifies the set of unimodular elements $Unim.El. (P)$ of $P$ with the set $Um (P^{\vee})$ of epimorphisms $P^{\vee} \rightarrow R$, i.e. an element $p \in P$ is unimodular if and only if $ev_{p}: P^{\vee} \rightarrow R$ is an epimorphism. Furthermore, if $p$ and $q$ are unimodular elements of $P$ and $\varphi \in Aut (P)$ with $\varphi (p) = q$, then $ev_{p} {\varphi}^{\vee} = ev_{q} : P^{\vee} \rightarrow R$.\\
We therefore obtain a well-defined map

\begin{center}
$Unim.El. (P)/Aut (P) \rightarrow Um (P^{\vee})/Aut (P^{\vee})$.
\end{center}

Let us show that this map is actually a bijection. Since the map is automatically surjective, it only remains to show that it is injective. So let $\psi \in Aut (P^{\vee})$ such that $ev_{p} \psi = ev_{q}$. One can easily check that the map $Aut (P) \rightarrow Aut (P^{\vee})$, $\varphi \mapsto {\varphi}^{\vee}$, is bijective; hence $\psi = {\varphi}^{\vee}$ for some $\varphi \in Aut (P)$. Thus, we obtain $ev_{q} = ev_{p} {\varphi}^{\vee}= ev_{\varphi (p)}$ and therefore $\varphi (p) = q$, because $can: P \rightarrow P^{\vee\vee}$ is injective. Altogether, we obtain a bijection
 
\begin{center}
$Unim.El. (P)/Aut (P) \xrightarrow{\cong} Um (P^{\vee})/Aut (P^{\vee})$.
\end{center}
 
In particular, if $P \cong P^{\vee}$, then $Unim.El. (P)/Aut (P) \cong Um (P)/Aut (P)$.\\

We introduce some notation. Let $P_0$ be a finitely generated projective $R$-module. For any $n \geq 3$, let $P_n = P_0 \oplus R e_3 \oplus ... \oplus R e_n$ be the direct sum of $P_0$ and free $R$-modules $R e_i$, $3 \leq i \leq n$, of rank $1$ with explicit generators $e_i$. We denote by $\pi_{k, n}: P_n \rightarrow R$ the projections onto the free direct summands of rank $1$ with index $k = 3, ...,n$. For any non-degenerate alternating form $\chi$ on $P_{2n}$, $n \geq 2$, we define ${Sp} (\chi) = \{\varphi \in Aut (P_{2n}) | {\varphi}^{t} \chi \varphi = \chi \}$.\\
For $n \geq 3$, we have embeddings $Aut (P_{n}) \rightarrow Aut (P_{n+1})$ and $E (P_{n}) \rightarrow E (P_{n+1})$. We denote by $Aut_{\infty} (P_0)$ (resp. $E_{\infty} (P_0)$) the direct limits of the groups $Aut (P_{n})$ (resp. $E (P_{n})$) via these embeddings.\\
In the following lemmas, we denote by $\psi_{2}$ the alternating form on $R^{2}$ given by the matrix
 
\begin{center}
$
\begin{pmatrix}
0 & 1 \\
- 1 & 0
\end{pmatrix}
$.
\end{center}

Thus, for any non-degenerate alternating form $\chi$ on $P_{2n}$ for some $n \geq 2$, we obtain a non-degenerate alternating form on $P_{2n+2}$ given by the orthogonal sum $\chi \perp \psi_{2}$.\\
With this notation in mind, we may now state and prove a few lemmas which provide the technical groundwork in the proofs of the main results in this paper:
 
\begin{Lem}\label{L2.7}
Let $\chi$ be a non-degenerate alternating form on $P_{2n}$ for some $n \geq 2$. Let $p \in P_{2n-1}$ and $a: P_{2n-1} \rightarrow R$. Then there are $\varphi, \psi \in Aut (P_{2n-1})$ such that
\begin{itemize}
\item the morphism $(\varphi \oplus 1) (id_{P_{2n}} + p \pi_{2n, 2n} )$ is an element of $E (P_{2n}) \cap {Sp} (\chi)$ and
\item the morphism $(\psi \oplus 1) (id_{P_{2n}} + a e_{2n})$ is an element of $E (P_{2n}) \cap {Sp} (\chi)$.
\end{itemize}
\end{Lem}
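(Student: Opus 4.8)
The plan is to exhibit $\varphi$ and $\psi$ explicitly as square-zero perturbations of the identity on $P_{2n-1}$, to check by a direct bilinearity computation that the two displayed composites preserve $\chi$, and to conclude that they lie in $E(P_{2n})$ via the Whitehead-type Lemma \ref{L2.6}. The whole argument rests on the following structural description of $\chi$ relative to $e_{2n}$, and this structural step is the main obstacle. Since $\chi$ is non-degenerate, the map $\chi^{\flat}: P_{2n} \to P_{2n}^{\nu}$, $x \mapsto \chi(x,-)$, is an isomorphism, and $\pi_{2n,2n} \in P_{2n}^{\nu}$ is unimodular (it has the section $e_{2n}$); hence $w := (\chi^{\flat})^{-1}(\pi_{2n,2n})$ is a unimodular element of $P_{2n}$ characterized by $\chi(w,x) = \pi_{2n,2n}(x)$ for all $x \in P_{2n}$. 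From $\chi(w,w)=0$ we get $\pi_{2n,2n}(w)=0$, so $w \in P_{2n-1}$; writing $\chi_0 := \chi|_{P_{2n-1}\times P_{2n-1}}$ and $c := \chi(-,e_{2n})|_{P_{2n-1}} \in P_{2n-1}^{\nu}$, we then have $\chi_0(w,-)=0$ and $c(w)=1$. In particular $c$ is a unimodular functional with section $w$, so $P_{2n-1} = Rw \oplus \ker(c)$ and $P_{2n} = (Rw \oplus Re_{2n}) \oplus \ker(c)$; this decomposition is orthogonal for $\chi$ (because $\chi(w,P_{2n-1})=0$ and $\chi(\ker(c),e_{2n}) = c(\ker(c)) = 0$), and $Rw \oplus Re_{2n}$ is a non-degenerate hyperbolic plane since $\chi(w,e_{2n}) = 1$ and $\chi(w,w)=\chi(e_{2n},e_{2n})=0$. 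It follows that $\chi_0$ restricts to a non-degenerate skew-symmetric form on $\ker(c)$.

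For the first bullet I would set $\varphi(x) := x - \chi_0(x,p)\,w$; its nilpotent part is $w$ composed with the functional $-\chi_0(-,p)$, which annihilates $w$ because $\chi_0(w,p) = \pi_{2n,2n}(p) = 0$, so $\varphi \in Aut(P_{2n-1})$. Then $\omega := (\varphi \oplus 1)(id_{P_{2n}} + p\,\pi_{2n,2n})$ satisfies $\omega(x) = x - \chi_0(x,p)w$ for $x \in P_{2n-1}$ and $\omega(e_{2n}) = e_{2n}+p$, and expanding $\chi(\omega x,\omega y)$ by bilinearity and using $\chi_0(w,-)=0$, $\chi(w,e_{2n})=1$, $\chi(w,p)=0$ and $\chi_0(p,p)=0$ one gets $\chi(\omega x,\omega y)=\chi(x,y)$ on the three types of pairs, i.e. $\omega \in Sp(\chi)$. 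Since $id_{P_{2n}} + p\,\pi_{2n,2n}$ lies in $E(P_{2n})$ (being a product of elementary automorphisms), it remains to check $\varphi \oplus 1 \in E(P_{2n})$; but $\varphi \oplus 1 = (id_{P_{2n-1}} + gf) \oplus (id_{Re_{2n}} + fg)^{-1}$ for $f: P_{2n-1} \to Re_{2n}$, $x \mapsto -\chi_0(x,p)e_{2n}$, and $g: Re_{2n} \to P_{2n-1}$, $e_{2n} \mapsto w$ (here $fg = 0$, as $\chi_0(w,p)=0$), so Lemma \ref{L2.6} gives $\varphi \oplus 1 \in E(P_{2n-1}\oplus Re_{2n}) \subseteq E(P_{2n})$. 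Hence $\omega \in E(P_{2n}) \cap Sp(\chi)$.

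The second bullet is dual. By non-degeneracy of $\chi_0$ on $\ker(c)$ there is a unique $k_0 \in \ker(c)$ with $\chi_0(k_0,k) = -a(k)$ for all $k \in \ker(c)$; set $\psi(x) := x + c(x)\,k_0$, which lies in $Aut(P_{2n-1})$ because $c(k_0) = 0$. Then $\omega' := (\psi \oplus 1)(id_{P_{2n}} + a\,e_{2n})$ satisfies $\omega'(x) = x + c(x)k_0 + a(x)e_{2n}$ for $x \in P_{2n-1}$ and $\omega'(e_{2n}) = e_{2n}$; writing each $x \in P_{2n-1}$ as $c(x)w + k_x$ with $k_x \in \ker(c)$ and using $c(w)=1$, $\chi_0(w,-)=0$, $c(k_0)=0$ and the defining property of $k_0$, the bilinearity computation again yields $\chi(\omega' x,\omega' y) = \chi(x,y)$ — the point is that all the terms involving $a(w)$ cancel — so $\omega' \in Sp(\chi)$. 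Finally $id_{P_{2n}} + a\,e_{2n}$ is elementary, and $\psi \oplus 1 = (id_{P_{2n-1}} + gf)\oplus(id_{Re_{2n}} + fg)^{-1}$ with $f: P_{2n-1}\to Re_{2n}$, $x \mapsto c(x)e_{2n}$, and $g: Re_{2n}\to P_{2n-1}$, $e_{2n} \mapsto k_0$, where again $fg = 0$, so Lemma \ref{L2.6} gives $\psi \oplus 1 \in E(P_{2n})$ and hence $\omega' \in E(P_{2n}) \cap Sp(\chi)$. Apart from the structural step in the first paragraph, the only work left is these two routine bilinearity verifications and the two applications of Lemma \ref{L2.6}.
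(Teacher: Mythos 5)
Your proposal is correct, and the two explicit unipotent corrections you construct are essentially the ones in the paper: your $w$ is the paper's $\Phi^{-1}(\pi_{2n,2n})$, your $\varphi = id - \chi_0(-,p)\,w$ is (up to sign convention) the paper's $id_{P_{2n-1}} - d\nu$, and your $k_0$ plays the role of the paper's $\Phi^{-1}(a\oplus 0)$; both arguments then invoke Lemma \ref{L2.6} in exactly the same way to get elementarity. Where you genuinely diverge is in the verification of the symplectic condition and in the construction of $\psi$: the paper checks membership in $Sp(\chi)$ \emph{locally}, citing Lemma \ref{L2.2} and the matrix computation in the proof of \cite[Lemma 5.4]{SV}, whereas you first establish the global orthogonal splitting $P_{2n} = (Rw \oplus Re_{2n}) \perp \ker(c)$ with $Rw \oplus Re_{2n}$ a hyperbolic plane, deduce non-degeneracy of $\chi_0$ on $\ker(c)$, and then verify $\chi(\omega x,\omega y)=\chi(x,y)$ by a direct coordinate-free bilinearity computation. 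Your route is more self-contained (no appeal to Suslin--Vaserstein and no localization), and the hyperbolic-plane splitting makes transparent \emph{why} the correction terms exist; the paper's local check is shorter on the page but outsources the actual computation. Both are valid, and your global computations (including the cancellation of the $a(w)$ terms in the second bullet) check out.
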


\begin{proof}
We let $\Phi: P_{2n} \rightarrow P_{2n}^{\vee}$ be the alternating isomorphism induced by $\chi$ and ${\Phi}^{-1}$ be its inverse.\\
For the first part, we introduce the following homomorphisms: Let $d$ be the morphism $R \rightarrow P_{2n-1}$ which sends $1$ to ${\Phi}^{-1} (\pi_{2n, 2n})$ (note that because of $\pi_{2n, 2n} ({\Phi}^{-1} (\pi_{2n, 2n})) = \chi ({\Phi}^{-1} (\pi_{2n, 2n}), {\Phi}^{-1} (\pi_{2n, 2n})) = 0$ it can be considered an element of $P_{2n-1}$). Furthermore, let $\nu = \chi (p, -) : P_{2n-1} \rightarrow R$. We observe that $\nu d = 0$. By Lemma \ref{L2.6}, the morphism $\varphi = id_{P_{2n-1}} - d \nu$ is an automorphism and $\varphi \oplus 1$ is an elementary automorphism. In particular, $(\varphi \oplus 1) (id_{P_{2n}} + p \pi_{2n, 2n})$ is an elementary automorphism. In light of the proof of \cite[Lemma 5.4]{SV} and Lemma \ref{L2.2}, one can check locally that it also lies in ${Sp} (\chi)$.\\
For the second part, we introduce the following homomorphisms: We will denote $c = \chi (-, e_{2n}): P_{2n-1} \rightarrow R$. Furthermore, we let $a \oplus 0: P_{2n} \rightarrow R$ be the extension of $a$ to $P_{2n}$ which sends $e_{2n}$ to $0$; then we denote by $\vartheta$ the homomorphism $R \rightarrow P_{2n-1}$ which sends $1$ to $\pi {\Phi}^{-1} (a \oplus 0)$, where $\pi: P_{2n} \rightarrow P_{2n-1}$ is the projection. Note that $c \vartheta = 0$. Again by Lemma \ref{L2.6}, the morphism $\psi = id_{P_{2n-1}} - \vartheta c$ is an automorphism and $\psi \oplus 1$ is an elementary automorphism. In particular, $(\psi \oplus 1) (id_{P_{2n}} + a e_{2n})$ is an elementary automorphism as well. Again, in light of the proof of \cite[Lemma 5.4]{SV} and Lemma \ref{L2.2}, one can check locally that it also lies in ${Sp} (\chi)$.
\end{proof}

\begin{Lem}\label{L2.8}
Let $\chi$ be a non-degenerate alternating form on the module $P_{2n}$ for some $n \geq 2$. Then $E (P_{2n}) e_{2n} = (E (P_{2n}) \cap {Sp} (\chi)) e_{2n}$.
\end{Lem}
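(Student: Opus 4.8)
The inclusion $(E (P_{2n}) \cap {Sp} (\chi)) e_{2n} \subseteq E (P_{2n}) e_{2n}$ is clear, so the plan is to prove the reverse one, and I would reduce it to the following factorization claim: \emph{every $\varphi \in E (P_{2n})$ can be written as $\varphi = (\beta \oplus 1) \sigma$ with $\beta \in Aut (P_{2n-1})$ and $\sigma \in E (P_{2n}) \cap {Sp} (\chi)$.} Granting this, I apply it to $\varphi^{-1}$ in place of $\varphi$: writing $\varphi^{-1} = (\beta \oplus 1) \sigma$ gives $\varphi = \sigma^{-1} (\beta^{-1} \oplus 1)$, and since $\beta^{-1} \oplus 1$ restricts to the identity on $R e_{2n}$ while $\sigma^{-1} \in E (P_{2n}) \cap {Sp} (\chi)$, we obtain $\varphi (e_{2n}) = \sigma^{-1} (e_{2n}) \in (E (P_{2n}) \cap {Sp} (\chi)) e_{2n}$. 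As $\varphi$ was arbitrary, $E (P_{2n}) e_{2n} \subseteq (E (P_{2n}) \cap {Sp} (\chi)) e_{2n}$, which together with the obvious inclusion gives the lemma. It thus remains to prove the claim.

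For this I would use that, by Corollary \ref{C2.4}, $E (P_{2n})$ is generated by the elementary automorphisms involving the summand $R e_{2n}$, and hence — taking commuting products of these — by the automorphisms of the two shapes $\gamma = id_{P_{2n}} + p \, \pi_{2n,2n}$ with $p \in P_{2n-1}$ and $\gamma = id_{P_{2n}} + a \, e_{2n}$ with $a \colon P_{2n-1} \to R$ (extended by zero on $R e_{2n}$). Two ingredients then enter. First, for $\beta \in Aut (P_{2n-1})$ a direct computation yields the commutation relations $(id_{P_{2n}} + p \, \pi_{2n,2n})(\beta \oplus 1) = (\beta \oplus 1)(id_{P_{2n}} + \beta^{-1}(p) \, \pi_{2n,2n})$ and $(id_{P_{2n}} + a \, e_{2n})(\beta \oplus 1) = (\beta \oplus 1)(id_{P_{2n}} + (a \circ \beta) \, e_{2n})$, so that a generator of either shape can be moved past a factor $\beta \oplus 1$ at the cost of merely reparametrising it within the same shape. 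Second, Lemma \ref{L2.7} says exactly that for each such generator $\gamma$ there is $\tau \in Aut (P_{2n-1})$ with $(\tau \oplus 1) \gamma \in E (P_{2n}) \cap {Sp} (\chi)$; equivalently $\gamma = (\tau^{-1} \oplus 1) \sigma$ with $\sigma := (\tau \oplus 1) \gamma \in E (P_{2n}) \cap {Sp} (\chi)$.

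Now I would write $\varphi = \gamma_k \cdots \gamma_1$ with each $\gamma_j$ of one of the two shapes and induct on $k$, the case $k = 0$ being trivial. For $k \geq 1$, the inductive hypothesis applied to $\gamma_{k-1} \cdots \gamma_1$ gives $(\beta' \oplus 1) \sigma'$ with $\beta' \in Aut (P_{2n-1})$ and $\sigma' \in E (P_{2n}) \cap {Sp} (\chi)$, so $\varphi = \gamma_k (\beta' \oplus 1) \sigma'$. The commutation relations give $\gamma_k (\beta' \oplus 1) = (\beta' \oplus 1) \gamma_k'$ for a generator $\gamma_k'$ of the same shape, and Lemma \ref{L2.7} applied to $\gamma_k'$ gives $\gamma_k' = (\beta'' \oplus 1) \sigma_k$ with $\beta'' \in Aut (P_{2n-1})$ and $\sigma_k \in E (P_{2n}) \cap {Sp} (\chi)$. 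Hence $\varphi = ((\beta' \beta'') \oplus 1)(\sigma_k \sigma')$, which is of the required form since $\beta' \beta'' \in Aut (P_{2n-1})$ and $\sigma_k \sigma' \in E (P_{2n}) \cap {Sp} (\chi)$. This completes the induction, hence the claim, hence the lemma.

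I expect the main obstacle to be conceptual rather than computational: one cannot simply replace each elementary factor of $\varphi$ by its symplectic substitute from Lemma \ref{L2.7} and recompose, because the resulting correction factors $\tau \oplus 1$ would be interleaved with the elementary factors and spoil the product. The key point is that these corrections live in $Aut (P_{2n-1}) \oplus 1$, so they can be commuted through the remaining elementary factors — harmlessly reparametrising them — and collected on one side of the word; and since $Aut (P_{2n-1}) \oplus 1$ fixes $e_{2n}$, passing to $\varphi^{-1}$ then moves this collected correction to the side that does not affect the image of $e_{2n}$. The remaining points — verifying the two commutation relations and checking that Lemma \ref{L2.7} applies verbatim to the reparametrised generators — are routine calculations.
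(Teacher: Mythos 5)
Your proof is correct and follows essentially the same route as the paper: both arguments induct on the length of an elementary word, use Lemma \ref{L2.7} to replace one generator by a symplectic one at the cost of a correction in $Aut(P_{2n-1}) \oplus 1$, and then push that correction through the remaining generators (the paper via conjugation $\beta_i = (\gamma \oplus 1)\alpha_i(\gamma^{-1}\oplus 1)$, you via the equivalent commutation relations), exploiting that $Aut(P_{2n-1})\oplus 1$ fixes $e_{2n}$. Your packaging as an explicit factorization $\varphi = (\beta\oplus 1)\sigma$ applied to $\varphi^{-1}$ is just a tidier reorganization of the same mechanism.
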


\begin{proof}
Let $p \in E (P_{2n}) e_{2n}$. By Corollary \ref{C2.4}, the group $E (P_{2n})$ is generated by automorphisms of the form $id_{P_{2n}} + s$, where $s$ is a morphism $P_{2n-1} \rightarrow Re_{2n}$ or $Re_{2n} \rightarrow P_{2n-1}$. Hence we can write $(\alpha_{1} ... \alpha_{r}) (p) = e_{2n}$, where each $\alpha_{i}$ is one of these generators. We show by induction on $r$ that $p \in (E (P_{2n}) \cap {Sp} (\chi)) e_{2n}$. If $r=0$, there is nothing to show. So let $r \geq 1$. Lemma \ref{L2.7} shows that there is $\gamma \in Aut (P_{2n-1})$ such that $(\gamma \oplus 1) \alpha_{r}$ lies in $E (P_{2n}) \cap {Sp} (\chi)$. We set $\beta_{i} = (\gamma \oplus 1) \alpha_{i} ({\gamma}^{-1} \oplus 1)$ for each $i < r$. Each of the $\beta_{i}$ lies in $E (P_{2n})$ and is again one of the generators of $E (P_{2n})$ given above. By construction, we furthermore have $(\beta_{1} ... \beta_{r-1} (\gamma \oplus 1) \alpha_{r}) (p) = e_{2n}$. This enables us to conclude by induction.
\end{proof}
 
\begin{Lem}\label{L2.9}
Let $\chi_{1}$ and $\chi_{2}$ be non-degenerate alternating forms on $P_{2n}$ such that ${\varphi}^{t} (\chi_{1} \perp \psi_{2}) \varphi = \chi_{2} \perp \psi_{2}$ for some $\varphi \in E_{\infty} (P_{0}) \cap Aut (P_{2n+2})$. Now let $\chi = \chi_{1} \perp \psi_{2}$. If $(E_{\infty} (P_{0}) \cap Aut (P_{2n+2})) e_{2n+2} = (E_{\infty} (P_{0}) \cap {Sp} (\chi)) e_{2n+2}$ holds, then one has ${\psi}^{t} \chi_{2} \psi = \chi_{1}$ for some $\psi \in E_{\infty} (P_{0}) \cap Aut (P_{2n})$.
\end{Lem}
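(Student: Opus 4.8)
The plan is to peel the hyperbolic plane $\psi_{2}$ off the given isometry $\varphi$ in two steps: first normalize $\varphi$ so that it fixes $e_{2n+2}$, and then show that the normalized map splits as $\bar{\tau} \oplus id_{R^{2}}$ with $\bar{\tau}$ the sought-for isometry. First I would rephrase the hypothesis: the identity ${\varphi}^{t} \chi \varphi = \chi_{2} \perp \psi_{2}$ says exactly that $\varphi$ is an isometry $(P_{2n+2}, \chi_{2} \perp \psi_{2}) \to (P_{2n+2}, \chi)$ lying in $E_{\infty} (P_{0}) \cap Aut (P_{2n+2})$. Since $\varphi (e_{2n+2})$ then lies in $(E_{\infty} (P_{0}) \cap Aut (P_{2n+2})) e_{2n+2}$, the assumed equality $(E_{\infty} (P_{0}) \cap Aut (P_{2n+2})) e_{2n+2} = (E_{\infty} (P_{0}) \cap {Sp} (\chi)) e_{2n+2}$ yields some $\sigma \in E_{\infty} (P_{0}) \cap {Sp} (\chi)$ with $\sigma (e_{2n+2}) = \varphi (e_{2n+2})$. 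Setting $\tau = {\sigma}^{-1} \varphi$, we then have $\tau \in E_{\infty} (P_{0}) \cap Aut (P_{2n+2})$, still ${\tau}^{t} \chi \tau = \chi_{2} \perp \psi_{2}$ (because $\sigma \in {Sp} (\chi)$), and $\tau (e_{2n+2}) = e_{2n+2}$.

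Next I would extract the block structure of $\tau$. Pairing an arbitrary $x \in P_{2n+2}$ against $e_{2n+2} = \tau (e_{2n+2})$ gives $(\chi_{1} \perp \psi_{2}) (\tau x, e_{2n+2}) = ({\tau}^{t} \chi \tau) (x, e_{2n+2}) = (\chi_{2} \perp \psi_{2}) (x, e_{2n+2})$; since $(\chi_{j} \perp \psi_{2}) (-, e_{2n+2}) = \pi_{2n+1, 2n+1}$ for $j = 1, 2$ by the shape of $\psi_{2}$, this gives $\pi_{2n+1, 2n+1} \circ \tau = \pi_{2n+1, 2n+1}$. Applying the same remark to ${\tau}^{-1}$ (an isometry in the opposite direction, still fixing $e_{2n+2}$) shows that $\tau$ and ${\tau}^{-1}$ both map $Q := \ker (\pi_{2n+1, 2n+1}) = P_{2n} \oplus R e_{2n+2}$ into itself, hence $\tau (Q) = Q$. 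Writing $\tau (x) = \bar{\tau} (x) + \ell (x) e_{2n+2}$ for $x \in P_{2n}$, with $\bar{\tau}: P_{2n} \to P_{2n}$ and $\ell : P_{2n} \to R$, the induced automorphism of $Q$ has (using $\tau (e_{2n+2}) = e_{2n+2}$) matrix $\left( \begin{smallmatrix} \bar{\tau} & 0 \\ \ell & 1 \end{smallmatrix} \right)$ with respect to $P_{2n} \oplus R e_{2n+2}$, forcing $\bar{\tau}$ to be an automorphism of $P_{2n}$. Restricting ${\tau}^{t} \chi \tau = \chi_{2} \perp \psi_{2}$ to $P_{2n} \times P_{2n}$ and using that $P_{2n}$ is $\chi$-orthogonal to $e_{2n+2}$ and that $\psi_{2} (e_{2n+2}, e_{2n+2}) = 0$, one obtains $\chi_{1} (\bar{\tau} x, \bar{\tau} y) = \chi_{2} (x, y)$, i.e. ${\bar{\tau}}^{t} \chi_{1} \bar{\tau} = \chi_{2}$. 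Hence $\psi := {\bar{\tau}}^{-1} \in Aut (P_{2n})$ satisfies ${\psi}^{t} \chi_{2} \psi = \chi_{1}$.

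It remains to see that $\psi$, equivalently $\bar{\tau}$, lies in $E_{\infty} (P_{0})$. Here I would write out the full matrix of $\tau$ relative to $P_{2n} \oplus R e_{2n+1} \oplus R e_{2n+2}$ — by the previous paragraph its $e_{2n+2}$-column and its $e_{2n+1}$-row already coincide with those of the identity — and then clear the three remaining off-diagonal blocks by multiplying $\tau$ on the left by elementary automorphisms of $P_{2n+2}$ of the types $P_{2n} \to R e_{2n+2}$, $R e_{2n+1} \to P_{2n}$ and $R e_{2n+1} \to R e_{2n+2}$. Decomposing these along the summands of $P_{2n}$ and invoking Lemma \ref{L2.3} and Corollary \ref{C2.4}, each such map is a product of elementary automorphisms of $P_{2n+2}$ and so lies in $E (P_{2n+2})$. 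This presents $\bar{\tau} \oplus id_{R^{2}}$ as a product of elements of $E_{\infty} (P_{0})$, whence $\bar{\tau} \oplus id_{R^{2}} \in E_{\infty} (P_{0})$; since $\bar{\tau}$ and $\bar{\tau} \oplus id_{R^{2}}$ have the same image under the stabilization maps $Aut (P_{2n}) \hookrightarrow Aut (P_{2n+2}) \hookrightarrow Aut_{\infty} (P_{0})$, it follows that $\bar{\tau} \in E_{\infty} (P_{0}) \cap Aut (P_{2n})$, hence $\psi \in E_{\infty} (P_{0}) \cap Aut (P_{2n})$.

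I expect this last step to be the main obstacle: one has to be careful that the normalization of $\tau$ to the block-diagonal form $\bar{\tau} \oplus id_{R^{2}}$ is performed entirely inside $E (P_{2n+2})$, so that no genuinely non-elementary automorphism is introduced, and that "elementary after one stabilization" really descends to membership of $\bar{\tau}$ in $E_{\infty} (P_{0}) \cap Aut (P_{2n})$. This bookkeeping with the direct-limit groups is the only delicate point; the form-theoretic computations of the middle step are routine and, as in the proof of Lemma \ref{L2.7}, could be checked locally if desired.
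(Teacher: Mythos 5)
Your proposal is correct and follows essentially the same route as the paper: normalize $\varphi$ by an element of $E_{\infty}(P_0)\cap Sp(\chi)$ so that it fixes $e_{2n+2}$, observe that the isometry condition forces $\pi_{2n+1,2n+2}\circ\tau=\pi_{2n+1,2n+2}$, and then strip off the last two summands by elementary automorphisms to land in $E_{\infty}(P_0)\cap Aut(P_{2n})$. You merely supply the block-matrix bookkeeping that the paper compresses into "equals $\psi'$ up to elementary morphisms," and you correctly pass to $\psi=\bar{\tau}^{-1}$ to match the stated orientation ${\psi}^{t}\chi_2\psi=\chi_1$ (modulo the harmless index slip $\pi_{2n+1,2n+1}$ for $\pi_{2n+1,2n+2}$).
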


\begin{proof}
Let ${\psi''} e_{2n+2} = \varphi e_{2n+2}$ for some ${\psi''} \in E_{\infty} (P_{0}) \cap {Sp} (\chi)$. Then we simply define ${\psi'} = {(\psi'')}^{-1} \varphi$. Since ${(\psi')}^{t} (\chi_{1} \perp \psi_{2}) {\psi'} = \chi_{2} \perp \psi_{2}$, the composite $\psi: P_{2n} \xrightarrow{\psi'} P_{2n+2} \rightarrow P_{2n}$ and $\psi'$ satisfy the following conditions:

\begin{itemize}
\item ${\psi}^{t} \chi_{1} \psi = \chi_{2}$;
\item ${\psi'} (e_{2n+2}) = e_{2n+2}$;
\item $\pi_{2n+1, 2n+2} {\psi'} = \pi_{2n+1, 2n+2}$.
\end{itemize}
 
The last two conditions imply that $\psi$ equals ${\psi'}$ up to elementary automorphisms and $\psi \in E_{\infty} (P_0) \cap Aut (P_{2n})$, which finishes the proof.
\end{proof}

\begin{Lem}\label{L2.10}
Assume that $\pi_{2n+1, 2n+1} (E_{\infty} (P_0) \cap Aut (P_{2n+1})) = Um (P_{2n+1})$ holds for some $n \in \mathbb{N}$. Then for any non-degenerate alternating form $\chi$ on $P_{2n+2}$ there is an automorphism $\varphi \in E_{\infty} (P_0) \cap Aut (P_{2n+2})$ such that ${\varphi}^{t} \chi \varphi = {\psi} \perp \psi_{2}$ for some non-degenerate alternating form $\psi$ on $P_{2n}$.
\end{Lem}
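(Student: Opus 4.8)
The plan is to bring $\chi$ into the shape $\psi \perp \psi_{2}$ in two moves: first use the surjectivity hypothesis to straighten the ``column'' of $\chi$ paired against $e_{2n+2}$, and then remove the remaining off-diagonal block by a single elementary shear. For the first move, let $\Phi\colon P_{2n+2} \to P_{2n+2}^{\nu}$ be the skew-symmetric isomorphism attached to $\chi$. Since $e_{2n+2}$ is a unimodular element of $P_{2n+2}$, the homomorphism $b := \chi(-, e_{2n+2})\colon P_{2n+2} \to R$ is an epimorphism; because $\chi(e_{2n+2}, e_{2n+2}) = 0$ it annihilates $R e_{2n+2}$, hence factors through $P_{2n+2}/R e_{2n+2} \cong P_{2n+1}$ and restricts to an epimorphism $\bar{b} \in Um(P_{2n+1})$. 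The hypothesis then provides $\gamma \in E_{\infty}(P_0) \cap Aut(P_{2n+1})$ with $\pi_{2n+1, 2n+1} \circ \gamma = \bar{b}$, i.e. $\chi(\gamma^{-1} y, e_{2n+2}) = \pi_{2n+1, 2n+1}(y)$ for all $y \in P_{2n+1}$. Extending $\gamma$ to $\tilde{\gamma} = \gamma \oplus id_{R e_{2n+2}} \in E_{\infty}(P_0) \cap Aut(P_{2n+2})$ and replacing $\chi$ by $\chi' := (\tilde{\gamma}^{-1})^{t} \chi (\tilde{\gamma}^{-1})$, I obtain $\chi'(y, e_{2n+2}) = \pi_{2n+1, 2n+1}(y)$ for $y \in P_{2n+1}$, so that $\chi'(e_{2n+1}, e_{2n+2}) = 1$, $\chi'(p, e_{2n+2}) = 0$ for all $p \in P_{2n}$, and $\chi'(e_{2n+2}, e_{2n+2}) = 0$.

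For the second move, set $\lambda := \chi'(-, e_{2n+1})|_{P_{2n}}\colon P_{2n} \to R$ and let $\Theta \in Aut(P_{2n+2})$ be the map fixing $e_{2n+1}$ and $e_{2n+2}$ and sending $p \mapsto p + \lambda(p) e_{2n+2}$ for $p \in P_{2n}$. Since $\lambda$ has image in the free rank-one summand $R e_{2n+2}$, the map $\Theta$ is a product of commuting elementary automorphisms for the given decomposition of $P_{2n+2}$ by Lemma \ref{L2.3}; in particular $\Theta \in E(P_{2n+2})$. A direct computation using $\chi'(p, e_{2n+2}) = 0$, $\chi'(e_{2n+2}, e_{2n+2}) = 0$ and $\chi'(e_{2n+2}, e_{2n+1}) = -1$ shows $(\Theta^{t} \chi' \Theta)(p, e_{2n+1}) = 0$ and $(\Theta^{t} \chi' \Theta)(p, e_{2n+2}) = 0$ for $p \in P_{2n}$, $(\Theta^{t} \chi' \Theta)(e_{2n+1}, e_{2n+2}) = 1$, and $(\Theta^{t} \chi' \Theta)(p, q) = \chi'(p, q)$ for $p, q \in P_{2n}$. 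Hence $\Theta^{t} \chi' \Theta = \psi \perp \psi_{2}$ with $\psi := \chi'|_{P_{2n}}$; moreover $\psi$ is non-degenerate, since $\psi \perp \psi_{2}$ is (it is $\chi$ pulled back along an automorphism) and $\psi_{2}$ is. Putting $\varphi := \tilde{\gamma}^{-1} \Theta \in E_{\infty}(P_0) \cap Aut(P_{2n+2})$ then yields $\varphi^{t} \chi \varphi = \psi \perp \psi_{2}$.

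The part that needs the most care is the first move: one must make sure the hypothesis --- stated via the right action of $E_{\infty}(P_0) \cap Aut(P_{2n+1})$ on $Um(P_{2n+1})$ --- really delivers the above normalisation after transporting everything to the pulled-back form on $P_{2n+2}$, and that the stabilisation $\gamma \mapsto \gamma \oplus id$ keeps us inside $E_{\infty}(P_0) \cap Aut(P_{2n+2})$ (which follows from compatibility of the embeddings defining $E_{\infty}(P_0)$). The second move is a routine symplectic normal-form manipulation of the type already handled locally via Lemma \ref{L2.2} and \cite[Lemma 5.4]{SV}; its one subtlety is to observe that the shear $\Theta$, although defined on the non-free summand $P_{2n}$, still lies in $E(P_{2n+2})$ because its image is contained in a single free rank-one summand.
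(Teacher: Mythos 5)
Your proof is correct and follows essentially the same route as the paper: first use the hypothesis to turn $\chi(-,e_{2n+2})$ into $\pi_{2n+1,2n+1}$ via an automorphism of $P_{2n+1}$ in $E_{\infty}(P_0)$ extended by the identity on $Re_{2n+2}$, then apply the elementary shear $id_{P_{2n+2}} + \chi'(-,e_{2n+1})\,e_{2n+2}$ to split off $\psi_2$. The differences are only cosmetic (you conjugate by $\tilde{\gamma}^{-1}$ where the paper uses $\varphi'\oplus 1$, you verify surjectivity of $\chi(-,e_{2n+2})$ globally rather than locally, and you restrict the shear to $P_{2n}$, which agrees with the paper's version since $\chi'(e_{2n+1},e_{2n+1})=0$).
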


\begin{proof}
Let $d = \chi(-, e_{2n+2}): P_{2n+1} \rightarrow R$. Since $d$ can be locally checked to be an epimorphism, there is an automorphism ${\varphi'} \in E_{\infty} (P_0) \cap Aut (P_{2n+1})$ such that ${d} {\varphi'} = \pi_{2n+1, 2n+1}$. Then the alternating form ${\chi'} = ({\varphi'}^{t} \oplus 1) \chi  ({\varphi'} \oplus 1)$ satisfies that ${\chi'} (-, e_{2n+2}) : P_{2n+1} \rightarrow R$ is just $\pi_{2n+1, 2n+1}$. Now we simply define $c = {\chi'} (-, e_{2n+1}): P_{2n+1} \rightarrow R$ and let ${\varphi}_{c} = id_{P_{2n+2}} + c e_{2n+2}$ be the elementary automorphism on $P_{2n+2}$ induced by $c$; then ${{\varphi}_{c}}^{t} {\chi'} {\varphi}_{c} = \psi \perp \psi_{2}$ for some non-degenerate alternating form $\psi$ on $P_{2n}$, as desired.
\end{proof}

\begin{Lem}\label{L2.11}
Let $P_0$ be a finitely generated projective $R$-module of rank $2$. Then we have $E (P_{0} \oplus R) \subset SL (P_{0} \oplus R)$. Furthermore, if $\varphi \in SL (P_0 \oplus R)$, then the induced morphism $\varphi_{*}: \det (P_0 \oplus R) \rightarrow \det (P_0 \oplus R)$ is the identity on $\det (P_0 \oplus R)$.
\end{Lem}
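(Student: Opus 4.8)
The plan is to treat the two assertions in turn, both of which come down to properties of the top exterior power functor. To set things up, note that $P_{0} \oplus R$ is finitely generated projective of constant rank $3$, so $L := det(P_{0} \oplus R) = \bigwedge^{3}(P_{0} \oplus R)$ is an invertible $R$-module; for $\varphi \in Aut(P_{0} \oplus R)$ the induced map $\varphi_{*} = \bigwedge^{3}\varphi$ is an automorphism of $L$, hence is multiplication by a unit $det(\varphi) \in R^{\times}$ under the canonical identification $End_{R}(L) \cong R$. Since $\bigwedge^{3}$ is a functor and composition in $End_{R}(L)$ corresponds to multiplication in $R$, the assignment $det: Aut(P_{0} \oplus R) \to R^{\times}$ is a group homomorphism whose kernel is, by definition, $SL(P_{0} \oplus R)$.

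For the first assertion it then suffices, since $E(P_{0} \oplus R)$ is generated by the elementary automorphisms $\varphi_{s}$ with $s: R \to P_{0}$ or $s: P_{0} \to R$, to show that $det(\varphi_{s}) = 1$ for each such generator. I would check this locally: for any prime $\mathfrak{p}$ the module $(P_{0})_{\mathfrak{p}}$ is free of rank $2$ over the local ring $R_{\mathfrak{p}}$, so $(\varphi_{s})_{\mathfrak{p}}$ is the automorphism of $R_{\mathfrak{p}}^{3}$ given by a block-triangular matrix with identity diagonal blocks and off-diagonal block $s_{\mathfrak{p}}$, i.e. a product of elementary matrices, whose determinant is $1$. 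As $\bigwedge^{3}$ commutes with localization, $det(\varphi_{s})$ is an element of $R$ whose image in every $R_{\mathfrak{p}}$ is $1$, and since being $1$ is a local property for elements of $R$ (cf. the proof of Lemma \ref{L2.2}), we conclude $det(\varphi_{s}) = 1$. Alternatively, one can avoid localization: $\varphi_{s}$ defines an endomorphism of the short exact sequence $0 \to P_{0} \to P_{0} \oplus R \to R \to 0$ (resp. $0 \to R \to P_{0} \oplus R \to P_{0} \to 0$) inducing the identity on the sub- and on the quotient module, and the determinant is additive along short exact sequences of finitely generated projective modules.

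For the second assertion there is essentially nothing left to do: if $\varphi \in SL(P_{0} \oplus R)$ then $det(\varphi) = 1 \in R^{\times}$ by the definition of $SL$, and since $\varphi_{*} = \bigwedge^{3}\varphi$ is precisely multiplication by $det(\varphi)$ on $L = det(P_{0} \oplus R)$, it follows that $\varphi_{*} = id_{det(P_{0} \oplus R)}$. The only point that genuinely requires care is the one made at the outset, namely that the determinant of an automorphism of the possibly non-free module $P_{0} \oplus R$ is a well-defined and multiplicative element of $R$ via the isomorphism $End_{R}(L) \cong R$; once this is in place both statements are immediate, and I anticipate no real obstacle beyond this bookkeeping.
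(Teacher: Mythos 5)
Your proof is correct. For the inclusion $E(P_{0}\oplus R)\subset SL(P_{0}\oplus R)$ you argue essentially as the paper does: localize at a prime, use that $(P_{0})_{\mathfrak{p}}$ is free of rank $2$ to identify the localized elementary automorphism with an element of $E_{3}(R_{\mathfrak{p}})\subset SL_{3}(R_{\mathfrak{p}})$, and use that equality of elements of $R$ is a local condition. Where you genuinely diverge is in the second statement: the paper localizes once more and invokes the fact that an automorphism of $R_{\mathfrak{p}}^{3}$ acts on $\det(R_{\mathfrak{p}}^{3})$ by multiplication by its determinant, whereas you take the unit by which $\bigwedge^{3}\varphi$ acts on the invertible module $\det(P_{0}\oplus R)$ as the \emph{definition} of $\det(\varphi)$, which renders the second statement tautological. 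Since the paper never pins down its definition of the determinant of an automorphism of a non-free projective module, this is legitimate; the real content of the paper's second local check is exactly that the exterior-power definition agrees with the locally computed one, and your opening paragraph (showing that $\bigwedge^{3}\varphi$ is multiplication by a well-defined unit and that this assignment is multiplicative) supplies the same bookkeeping in a cleaner form. Your alternative, localization-free argument for the first part --- that $\varphi_{s}$ is an endomorphism of a short exact sequence inducing the identity on the submodule and on the quotient, combined with the natural isomorphism $\det(M)\cong\det(M')\otimes\det(M'')$ for short exact sequences of finitely generated projectives --- is also valid; the only slip is calling this property of the determinant ``additive'' where ``multiplicative'' is meant, which is harmless.
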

 
\begin{proof}
Use that these properties are local and check them when $R$ is local.
\end{proof}

\subsection{The local-global principle for transvection groups}\label{The local-global principle for transvection groups}\label{2.3}

We will now briefly review the local-global principle for transvection groups proven in \cite{BBR} and use it in order to deduce stability results for stably elementary automorphisms of $P_{0} \oplus R^{2}$. For this, we only have to assume that $R$ is an arbitrary commutative ring with unit.\\
First of all, let $P$ be a finitely generated projective $R$-module and $q \in P$, $\varphi \in P^{\vee}$ such that $\varphi (q) = 0$. This data naturally determines a homomorphism $\varphi_{q}: P \rightarrow P$ by $\varphi_{q} (p) = \varphi (p) q$ for all $p \in P$. An automorphism of the form $id_{P} + \varphi_{q}$ is called a transvection if either $q \in Unim.El. (P)$ or $\varphi \in Um (P)$. We denote by $T(P)$ the subgroup of $Aut (P)$ generated by transvections.\\
Now let $Q = P \oplus R$ be a direct sum of a finitely generated projective $R$-module $P$ of rank $\geq 2$ and the free $R$-module of rank $1$. Then the elementary automorphisms of $P \oplus R$ are easily seen to be transvections and are also called elementary transvections. Consequently, we have $E (Q) \subset T (Q) \subset Aut (Q)$.\\
In the theorem stated below, we denote by $R [X]$ the polynomial ring in one variable over $R$ and let $Q [X] = Q \otimes_{R} R[X]$. The evaluation homomorphisms $ev_{0}, ev_{1}: R[X] \rightarrow R$ induce maps $Aut (Q[X]) \rightarrow Aut (Q)$. If $\alpha (X) \in Aut (Q[X])$, then we denote its images under these maps by $\alpha (0)$ and $\alpha (1)$ respectively. Similarly, the localization homomorphism $R \rightarrow R_{\mathfrak{m}}$ at any maximal ideal $\mathfrak{m}$ of $R$ induces a map $Aut (Q[X]) \rightarrow Aut (Q_{\mathfrak{m}} [X])$, where $Q_{\mathfrak{m}} [X] = Q[X] \otimes_{R[X]} R_{\mathfrak{m}}[X]$; if $\alpha (X) \in Aut (Q[X])$, we denote its image under this map by $\alpha_{\mathfrak{m}} (X)$.\\
We will use the following result proven by Bak, Basu and Rao (see \cite[Theorems 3.6 and 3.10]{BBR}):

\begin{Thm}\label{T2.12}
The inclusion $E (Q) \subset T (Q)$ is an equality. If $\alpha (X) \in Aut (Q[X])$ satisfies $\alpha (0) = id_{Q} \in Aut (Q)$ and $\alpha_{\mathfrak{m}} (X) \in E (Q_{\mathfrak{m}} [X])$ for all maximal ideals $\mathfrak{m}$ of $R$, then $\alpha (X) \in E (Q[X])$.
\end{Thm}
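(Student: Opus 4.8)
The statement combines two facts of different character --- the identity $E(Q) = T(Q)$ for $Q = P \oplus R$, and a Quillen-type local--global principle for the functor $A \mapsto E(Q \otimes_R A)$ --- and I would treat them separately. For $E(Q) = T(Q)$ only the inclusion $T(Q) \subseteq E(Q)$ needs proof: given a transvection $id_Q + \varphi_q$ with $\varphi(q) = 0$, if $q$ is unimodular one splits $Q = Rq \oplus C$, and if instead $\varphi \in Um(Q)$ one splits $Q = \ker\varphi \oplus Rf$ with $\varphi(f) = 1$; in either case $id_Q + \varphi_q$ is, with respect to the displayed decomposition, a single elementary automorphism $id_Q + s$ with $s$ mapping the free rank-one summand into, or the complement onto, the other factor. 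Thus the claim reduces to the lemma that an elementary automorphism taken with respect to \emph{any} decomposition $Q = C \oplus R$ into a projective module plus a free rank-one module already lies in $E(P,R)$; I would deduce this from the commutator calculus of Lemma \ref{L2.3} and Corollary \ref{C2.5} together with the Whitehead-type Lemma \ref{L2.6}. This part I regard as essentially known from earlier work on transvection groups and will not belabour it.

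For the local--global principle I would run the classical Quillen--Suslin ``analytic'' argument, transported to the transvection group. Since $Q$ is finitely presented, $A \mapsto Aut(Q\otimes_R A)$ and its elementary subgroup commute with filtered colimits of rings; in particular $E(Q_{\mathfrak m}[X]) = \varinjlim_{s \notin \mathfrak m} E(Q_s[X])$, so the hypothesis $\alpha_{\mathfrak m}(X) \in E(Q_{\mathfrak m}[X])$ produces, for each maximal ideal $\mathfrak m$, an element $s \notin \mathfrak m$ with $\alpha_s(X) \in E(Q_s[X])$; by quasi-compactness of $Spec(R)$ finitely many such $s_1,\dots,s_m$ generate the unit ideal. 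The technical core is then the \emph{dilation lemma}: for a single $s$, if $\beta(X) \in Aut(Q[X])$ satisfies $\beta(0) = id_Q$ and $\beta_s(X) \in E(Q_s[X])$, then there is an integer $k \geq 1$ such that $\beta(bX)\,\beta(cX)^{-1} \in E(Q[X])$ whenever $b - c \in s^k R$. I would prove it by the usual device: using $\beta_s(0) = id_Q$, write $\beta_s(X)$ as a product of elementary transvections over $R_s[X]$ whose transvection parameters vanish at $X = 0$ (here the free summand of $Q$ substitutes for the hypothesis ``$n \geq 3$'' of the matrix case, via a relative factorization of $E(Q_s[X])$); introduce an auxiliary variable $Y$ through the substitution $X \mapsto XY$ to get a family over $R_s[X,Y]$ trivial at $Y = 0$; and then replace $Y$ by $s^k Y$ for $k$ large, which --- since every parameter is divisible by $Y$ --- clears all $R_s$-denominators, so the family now lives over $R$ and inside $E(Q[X,Y])$; suitable specializations of $Y$ yield the asserted elements $\beta(bX)\beta(cX)^{-1}$.

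It then remains to patch. Choose $a_1,\dots,a_m \in R$ with $\sum_i a_i = 1$ and $a_i \in s_i^{k_i}R$ (possible, as the $s_i^{k_i}$ generate the unit ideal, with $k_i$ the exponent furnished by the dilation lemma for $s_i$). Setting $g_i := \alpha\big((a_1 + \cdots + a_i)X\big)$, so that $g_0 = \alpha(0) = id_Q$ and $g_m = \alpha(X)$, one has
\[
\alpha(X) \;=\; \big(g_m g_{m-1}^{-1}\big)\big(g_{m-1} g_{m-2}^{-1}\big)\cdots\big(g_1 g_0^{-1}\big),
\]
and each factor $g_i g_{i-1}^{-1}$ has argument-difference $a_i \in s_i^{k_i}R$, hence lies in $E(Q[X])$ by the dilation lemma; therefore $\alpha(X) \in E(Q[X])$, which is the assertion.

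The step I expect to be the real obstacle is the dilation lemma in the genuinely non-free setting. In the $GL_n$ case one clears denominators by manipulating matrix entries; here one must argue intrinsically with the transvection data $(q,\varphi)$, control its behaviour under the localization $R \to R_s$, and make sure that after clearing denominators the resulting family lands in $E(Q[X,Y])$ rather than merely in $Aut(Q[X,Y])$ --- this is exactly where the identity $E(Q) = T(Q)$ and the better local structure of $Q_s$ over $R_s$ (locally free, closer to free) are used, and where the relative-factorization normal form for $E(Q_s[X])$ must be set up carefully. A pleasant feature of routing the proof through the transvection group in this way, rather than through Quillen's original patching for extended modules, is that no Noetherian hypothesis on $R$ is needed.
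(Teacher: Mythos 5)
A preliminary remark: the paper does not prove Theorem \ref{T2.12} at all --- it is quoted from Bak--Basu--Rao \cite{BBR} (their Theorems 3.6 and 3.10), so there is no internal proof to compare against. Judged on its own terms, your treatment of the local--global half is the standard Quillen--Suslin dilation-and-patching argument transported to the elementary transvection group, and this is essentially how it is done in \cite{BBR}: commutation of $E(-)$ with filtered colimits to replace $\mathfrak{m}$ by a single $s \notin \mathfrak{m}$, the dilation lemma with denominators cleared by the substitution $Y \mapsto s^k Y$ (using that $\mathrm{Hom}(P_s[X], R_s[X]) \cong \mathrm{Hom}(P[X], R[X]) \otimes_R R_s$ because $P$ is finitely generated projective), and the telescoping product over a partition $\sum_i a_i = 1$ with $a_i \in s_i^{k_i}R$. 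That half of the proposal is sound as a sketch.

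The gap is in the first half. Your reduction of a transvection $id_Q + \varphi_q$ to an elementary automorphism with respect to \emph{some} decomposition $Q = C \oplus R$ is correct, but the remaining step --- that such an automorphism lies in $E(P \oplus R)$ for the \emph{fixed} decomposition --- does not follow from the commutator calculus of Lemma \ref{L2.3}, Corollary \ref{C2.5} and the Whitehead-type Lemma \ref{L2.6}. Those identities manipulate elementary generators taken with respect to one and the same decomposition; they give no handle on comparing two unrelated splittings of $Q$. Even in the free case $Q = R^n$ the corresponding statement (that $I_n + vw^t \in E_n(R)$ for $v$ unimodular, $\langle v, w\rangle = 0$, $n \geq 3$) is Suslin's lemma, whose proof needs the decomposition $w = \sum_{i<j} a_{ij}(v_i e_j - v_j e_i)$ and an explicit factorization of each resulting factor, not just commutator identities. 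In \cite{BBR} the logical order is the reverse of yours: the local--global principle for the elementary transvection group is proved first, and $T(Q) = E(Q)$ is then deduced from it by applying the principle to the isotopy $id_{Q[X]} + X\varphi_q$, which is trivial at $X = 0$ and is locally a transvection of a free module of rank $\geq 3$, hence locally elementary by the classical free-module result (this is also where the hypothesis that $P$ has rank $\geq 2$, suppressed in the statement, enters). So you should either prove the decomposition-independence lemma honestly --- which in practice forces you through a Suslin-type argument or through the local--global principle anyway --- or restructure the proof so that $E(Q) = T(Q)$ is derived last, as a corollary of the patching theorem.
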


In order to prove the desired stability results, we introduce the following property: Let $\mathfrak{C}$ be either the class of Noetherian rings or the class of affine $k$-algebras over a fixed field $k$. Furthermore, let $d \geq 1$ be an integer and $m \in \mathbb{N}$. We say that $\mathfrak{C}$ has the property $\mathcal{P} (d,m)$ if for $R$ in $\mathfrak{C}$ of dimension $d$ and for any finitely generated projective $R$-module $P$ of rank $m$ the group $SL(P \oplus R^{n})$ acts transitively on $Um (P \oplus R^{n})$ for all $n \geq 2$.\\
If $k$ is a field, we simply say that $k$ has the property $\mathcal{P} (d,m)$ if the class of affine $k$-algebras has the property $\mathcal{P} (d,m)$.\\
Of course, if the class of Noetherian rings has the property $\mathcal{P} (d,m)$, then the same holds for every field. The class of Noetherian rings has the property $\mathcal{P} (d,m)$ for $m \geq d$. Furthermore, it follows from \cite{B} that any perfect field $k$ of cohomological dimension $\leq 1$ satisfies property $\mathcal{P} (d,d-1)$ if $d! \in k^{\times}$.\\
In the remainder of this section, we will denote by $\pi$ the canonical projection $P \oplus R^{n} \rightarrow R$ onto the "last" free direct summand of $R^{n}$.

\begin{Lem}\label{L2.13}
Let $\mathfrak{C}$ be the class of Noetherian rings or affine $k$-algebras over a fixed field $k$. Assume that $\mathfrak{C}$ has the property $\mathcal{P} (d,m)$. Let $R$ be a $d$-dimensional ring in $\mathfrak{C}$, $P$ a projective $R$-module of rank $m$ and $a \in Um (P \oplus R^{n})$ for some $n \geq 2$. Moreover, assume that there is an element $t \in R$ and a homomorphism $w: P \oplus R^{n} \rightarrow R$ such that $a - \pi = t w$. Then there is $\varphi \in SL (P \oplus R^{n})$ such that $a = \pi \varphi$ and $\varphi (x) \equiv id_{P \oplus R^{n}} (x)$ modulo $(t)$ for all $x$.
\end{Lem}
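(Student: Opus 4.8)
The plan is to deduce this relative statement from the \emph{absolute} transitivity furnished by $\mathcal{P}(d,m)$ by passing to the ``doubled'' ring $R' := R \times_{R/(t)} R$, the fibre product of two copies of $R$ along the canonical surjection $R \to R/(t)$. Write $Q := P \oplus R^{n}$, and for an $R$-module $M$ write $\bar{M} := M/tM$; the hypothesis $a - \pi = t w$ says precisely that $\bar{a} = \bar{\pi}$ as homomorphisms $\bar{Q} \to R/(t)$.

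The first step is to check that $R'$ is an admissible ring. The two projections exhibit $R \times R$ as a module-finite $R'$-algebra (it is generated over $R'$ by $(1,0)$ and $(0,1)$, since $(1,1) \in R'$), so by Eakin--Nagata in the Noetherian case, resp. Artin--Tate in the affine case, $R'$ again lies in $\mathfrak{C}$, and $\dim R' = \dim (R \times R) = d$. By Milnor patching along the Milnor square with corners $R', R, R, R/(t)$ (cf. \cite{W}), the $R'$-module $P' := P \times_{\bar{P}} P$ is finitely generated projective of constant rank $m$, one has $Q' := Q \times_{\bar{Q}} Q \cong P' \oplus R'^{\,n}$, and $End_{R'}(Q') = End_{R}(Q) \times_{End_{R/(t)}(\bar{Q})} End_{R}(Q)$; hence $Aut_{R'}(Q')$, and inside it $SL(Q')$, consists exactly of the pairs $(\sigma_{1}, \sigma_{2})$ with $\sigma_{i} \in Aut_{R}(Q)$ (resp. in $SL(Q)$) and $\bar{\sigma}_{1} = \bar{\sigma}_{2}$.

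The second step uses $\bar a = \bar\pi$: the pair $(a, \pi)$ is then a well-defined $R'$-linear map $Q' \to R'$, and it is surjective, because $a$ and $\pi$ are and because $\ker \bar{a} = \ker \bar{\pi}$ (the latter being the reduction of the direct summand $\ker \pi$); thus $(a, \pi) \in Um(Q')$, while the standard projection $\pi'$ of $Q' \cong P' \oplus R'^{\,n}$ onto its last free summand is exactly $(\pi, \pi) \in Um(Q')$. Applying $\mathcal{P}(d,m)$ over $R'$ to the rank-$m$ module $P'$ (with the given $n \geq 2$), the group $SL(Q')$ acts transitively on $Um(Q')$, so there is $\Sigma = (\sigma_{1}, \sigma_{2}) \in SL(Q')$ with $\pi' \Sigma = (a, \pi)$; unravelling, this says $\pi \sigma_{1} = a$, $\pi \sigma_{2} = \pi$, and $\bar{\sigma}_{1} = \bar{\sigma}_{2}$. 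Now set $\varphi := \sigma_{2}^{-1} \sigma_{1} \in Aut_{R}(Q)$: from $\pi \sigma_{2} = \pi$ we get $\pi \sigma_{2}^{-1} = \pi$, hence $\pi \varphi = \pi \sigma_{2}^{-1} \sigma_{1} = \pi \sigma_{1} = a$; and reducing modulo $t$ gives $\bar{\varphi} = \bar{\sigma}_{2}^{-1} \bar{\sigma}_{1} = id_{\bar{Q}}$, i.e. $\varphi(x) \equiv id_{P \oplus R^{n}}(x)$ modulo $(t)$ for all $x$, as required.

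The only genuine work, and thus the main obstacle, is the patching bookkeeping of the second paragraph: making sure $R'$ really stays in $\mathfrak{C}$ with dimension exactly $d$, that $P'$ is projective of the prescribed rank (so that $\mathcal{P}(d,m)$ is applicable over $R'$), and that automorphisms of $Q'$ split as compatible pairs $(\sigma_{1}, \sigma_{2})$ with $\bar{\sigma}_{1} = \bar{\sigma}_{2}$ — this last point being what allows the output of transitivity to be fed into the short final computation. All of this is standard Milnor-square theory; once it is set up the rest is the two-line argument above.
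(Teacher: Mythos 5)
Your proof is correct, and it is the paper's proof in a light disguise. The paper (following Rao--van der Kallen) passes to the auxiliary ring $B = R[X]/(X^{2}-tX)$, lifts $a = \pi + tw$ to $a(X) = \pi + Xw$ over $Q \otimes_{R} B$, applies $\mathcal{P}(d,m)$ there to get $\varphi(X) \in SL(Q \otimes_{R} B)$ with $\pi\varphi(X) = a(X)$, and takes $\varphi = \varphi(0)^{-1}\varphi(t)$; the two evaluations $X \mapsto 0$ and $X \mapsto t$ are exactly your two projections, and indeed $B$ surjects onto your $R' = R \times_{R/(t)} R$ via $X \mapsto (0,t)$ (this is an isomorphism when $t$ is a nonzerodivisor, and has kernel $\mathrm{Ann}(t)\cdot X$ in general). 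So the conceptual skeleton --- manufacture a ring in $\mathfrak{C}$ of the same dimension that sees $a$ and $\pi$ simultaneously, apply the absolute transitivity there, and divide the two specializations --- is identical; what differs is the bookkeeping. The paper's choice buys simplicity: $B$ is visibly a finite free $R$-algebra presented as a quotient of $R[X]$, so membership in $\mathfrak{C}$, the dimension count, and the projectivity and rank of $Q \otimes_{R} B$ are immediate, and no patching of endomorphism rings is needed. Your choice costs you Eakin--Nagata resp. Artin--Tate to keep $R'$ in $\mathfrak{C}$, plus the Milnor-square identifications of $P'$, $Q'$ and $\mathrm{End}_{R'}(Q')$ (all standard for finitely generated projectives, and your surjectivity of $(a,\pi)$ is most cleanly seen by the explicit correction $x = y + tz$ with $a(z)$ hitting the discrepancy); in exchange the final computation $\varphi = \sigma_{2}^{-1}\sigma_{1}$ is perhaps more transparent than the evaluation argument. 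Both are complete proofs.
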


\begin{proof}
We set $B = R[X]/\langle X^{2}-tX\rangle$. By assumption, we have $a = \pi + t w$. We lift it to $a (X) = \pi + X w: (P \oplus R^{n})\otimes_{R} B\rightarrow B$, which can be checked to be an epimorphism (as in the proof of \cite[Proposition 3.3]{RvdK}). Therefore we have $a (X) \in Um ((P \oplus R^{n})\otimes_{R} B)$. Since $B$ still is a ring in $\mathfrak{C}$ of dimension $d$, property $\mathcal{P} (d,m)$ now gives an element $\varphi (X) \in SL ((P \oplus R^{n})\otimes_{R} B)$ with $a (X) = \pi \varphi (X)$. Then $\varphi = {\varphi (0)}^{-1} \varphi (t)$ is the desired automorphism.
\end{proof}

For any $n \geq 2$, we say that two automorphisms $\varphi, \psi \in SL (P \oplus R^{n})$ are isotopic if there is an automorphism $\tau (X)$ of $(P \oplus R^{n})\otimes_{R} R[X]$ with determinant $1$ such that $\tau (0) = \varphi$ and $\tau (1) = \psi$.

\begin{Thm}\label{T2.14}
Let $\mathfrak{C}$ be the class of Noetherian rings or affine $k$-algebras over a fixed field $k$. Assume that $\mathfrak{C}$ has the property $\mathcal{P} (d+1,m+1)$. Let $R$ be a $d$-dimensional ring in $\mathfrak{C}$, $P$ a projective $R$-module of rank $m$ and $\sigma \in Aut (P \oplus R^{n})$ for some $n \geq 2$. Assume that $\sigma \oplus 1 \in E (P \oplus R^{n+1})$. Then $\sigma$ is isotopic to $id_{P \oplus R^{n}}$.
\end{Thm}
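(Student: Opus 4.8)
The plan is to reduce the statement to an application of Theorem~\ref{T2.12} (the local-global principle for transvection groups) combined with Lemma~\ref{L2.13}. Since $\sigma \oplus 1 \in E(P \oplus R^{n+1})$, the element $\pi' := \pi \sigma \in Um(P \oplus R^n)$ (where $\pi$ is the projection onto the last summand) becomes, after adding a free summand, equivalent to the base point $\pi$ under $E(P \oplus R^{n+1})$; but more importantly I want to produce an isotopy directly. First I would write $\sigma \oplus 1 = \gamma_1 \cdots \gamma_r$ as a product of elementary transvections of $P \oplus R^{n+1}$, and replace each generator $\gamma_j = \varphi_{s_{ij}}$ by the "linearized" automorphism $\gamma_j(X) = \varphi_{X s_{ij}}$ of $(P\oplus R^{n+1})\otimes_R R[X]$. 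The product $\tilde\tau(X) = \gamma_1(X)\cdots\gamma_r(X)$ is then an automorphism of $(P\oplus R^{n+1})[X]$ with $\tilde\tau(0) = id$ and $\tilde\tau(1) = \sigma \oplus 1$. This is the standard trick showing that any elementary automorphism is isotopic (through elementary automorphisms) to the identity; it gives an isotopy in $P \oplus R^{n+1}$, not in $P \oplus R^n$, so the genuine work is to "descend" this isotopy one free rank.

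To descend, I would use the hypothesis $\mathcal{P}(d+1,m+1)$ applied to the ring $R$ of dimension $d$ (viewing $P \oplus R^n$ as $(P\oplus R^{n-1})\oplus R$, a module of the shape "projective of rank $m+n-1 \geq m+1$ plus a free summand" — so actually the relevant instance is $\mathcal{P}(d, m + n - 1)$ or, after the $B = R[X]/(X^2-tX)$ trick raises the dimension, $\mathcal{P}(d+1, \cdot)$; I will need to check that the rank bookkeeping in $\mathcal{P}(d+1,m+1)$ covers the module $P \oplus R^n$ for every $n \geq 2$, which it does since property $\mathcal{P}(d,m)$ for the class of Noetherian rings or affine algebras is monotone in $m$ in the relevant range). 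Concretely: since $\sigma \oplus 1$ is elementary, the row $\pi(\sigma \oplus 1) = (\pi\sigma, 0) + (0,\dots,0,1)$ differs from $\pi$ by something divisible by a suitable parameter after passing to $R[X]/(X^2 - tX)$, so Lemma~\ref{L2.13} (or rather its proof applied to the isotopy $\tilde\tau(X)$ restricted appropriately) yields that $\pi\sigma = \pi\varphi$ for some $\varphi \in Aut(P\oplus R^n)$ congruent to the identity, and then $\sigma\varphi^{-1}$ fixes $\pi$, hence stabilizes $P \oplus R^{n-1}$ and lies in a group to which one can induct. Iterating, or more cleanly re-running the linearization argument one rank lower using $\mathcal{P}(d+1,m+1)$ to guarantee the needed transitivity at each polynomial-ring stage, produces an isotopy $\tau(X)$ of $(P\oplus R^n)\otimes_R R[X]$ with $\tau(0) = id$, $\tau(1) = \sigma$.

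For the final clause, assume $R$ is regular. Then I would invoke the local-global principle: the isotopy $\tau(X) \in Aut((P\oplus R^n)[X])$ satisfies $\tau(0) = id$, and for every maximal ideal $\mathfrak{m}$ the localization $\tau_{\mathfrak{m}}(X) \in Aut((P\oplus R^n)_{\mathfrak{m}}[X])$ — since $R_{\mathfrak{m}}$ is regular local, $(P\oplus R^n)_{\mathfrak{m}}$ is free, and over $R_{\mathfrak{m}}[X]$ one uses (via Suslin's $K_1$-analogue / the $SL_k = E_k$ statement over regular local polynomial rings, or directly that an isotopy to the identity over a local ring is elementary) that $\tau_{\mathfrak{m}}(X) \in E((P\oplus R^n)_{\mathfrak{m}}[X])$. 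By Theorem~\ref{T2.12} this forces $\tau(X) \in E((P\oplus R^n)[X])$, and evaluating at $X = 1$ gives $\sigma = \tau(1) \in E(P\oplus R^n)$.

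The main obstacle I anticipate is the descent step: passing from an isotopy (or elementariness) in $P \oplus R^{n+1}$ down to $P \oplus R^n$ without losing control, i.e. correctly feeding the base-point transitivity property $\mathcal{P}(d+1,m+1)$ into the $R[X]/(X^2-tX)$ construction of Lemma~\ref{L2.13} so that the resulting $\varphi$ is genuinely an automorphism of $P \oplus R^n$ congruent to the identity modulo the parameter, and then checking that the residual automorphism $\sigma\varphi^{-1}$ fixing $\pi$ really does reduce the problem to one of strictly smaller free rank (equivalently, that the stabilizer of $\pi$ in $Aut(P\oplus R^n)$ is $Aut(P\oplus R^{n-1})\ltimes (\text{elementary factor})$). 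The regularity input in the last clause is comparatively routine once the local statement "isotopic-to-identity over a regular local ring $\Rightarrow$ elementary over its polynomial ring" is in hand.
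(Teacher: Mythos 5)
Your proposal follows essentially the same route as the paper: linearize the elementary factorization of $\sigma\oplus 1$ to get an isotopy $\tilde\tau(X)$ with $\tilde\tau(0)=id$ and $\tilde\tau(1)=\sigma\oplus 1$, apply Lemma \ref{L2.13} over $R[X]$ (of dimension $d+1$, viewing $P\oplus R^{n+1}$ as $(P\oplus R)\oplus R^{n}$ so that the rank-$(m+1)$ hypothesis $\mathcal{P}(d+1,m+1)$ is the one used) with parameter $t=X^{2}-X$ --- legitimate because $\pi\tilde\tau(X)-\pi$ vanishes at $X=0$ and $X=1$ --- to produce $\chi(X)\equiv id$ mod $(X^{2}-X)$ with $\pi\tilde\tau(X)\chi(X)^{-1}=\pi$, so that $\tilde\tau(X)\chi(X)^{-1}$ descends up to elementary automorphisms to the desired isotopy $\rho(X)$ of $(P\oplus R^{n})\otimes_{R}R[X]$, and finally invoke Vorst's theorem locally together with Theorem \ref{T2.12} to get $\rho(X)$ elementary when $R$ is regular. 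The only wobbles are cosmetic: a single descent step suffices (no iteration or induction on $n$), and your remark that $\pi(\sigma\oplus 1)$ ``differs from $\pi$'' is off --- they coincide exactly since $\sigma\oplus 1$ is the identity on the last free summand, which is precisely what makes the divisibility by $X^{2}-X$ work.
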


\begin{proof}
Since ${\sigma \oplus 1} \in E (P \oplus R^{n+1})$, it is clear that there is a natural isotopy $\tau (X) \in E ((P \oplus R^{n+1})\otimes_{R} R[X])$ with $\tau (0) = id_{P \oplus R^{n+1}}$ and $\tau (1) = \sigma \oplus 1$. Now apply the previous lemma to $R [X]$, $X^{2} - X$ and $a = \pi \tau (X)$ in order to obtain an automorphism $\chi (X) \in SL ((P \oplus R^{n+1})\otimes_{R} R[X])$ with $\pi \chi (X) = a$ such that ${\chi(X)} (x) \equiv x$ modulo $\langle X^{2}-X\rangle$. Thus, $\pi \tau (X) {\chi(X)}^{-1} = \pi$. Therefore $\tau (X) {\chi (X)}^{-1}$ equals $\rho (X) \oplus 1$ for some $\rho (X) \in SL ((P \oplus R^{n})\otimes_{R} R[X])$ up to elementary automorphisms. But then $\rho (X)$ is an isotopy from $id_{P \oplus R^{n}}$ to $\sigma$.
\end{proof}

We can now use Theorem \ref{T2.14} in order to deduce the following stability results:

\begin{Thm}\label{T2.15}
With the notation of Section \ref{2.2}, we further assume that $P_{0}$ has rank $2$. If $R$ is a regular Noetherian ring of dimension $2$, then there is an equality $E_{\infty} (P_{0}) \cap Aut (P_{4}) = E (P_{4})$.
\end{Thm}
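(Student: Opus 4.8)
The plan is to obtain this as a formal consequence of the stability result Theorem \ref{T2.14}, via a downward induction on the number of free summands that one has stabilized by. First note that the inclusion $E (P_{4}) \subseteq E_{\infty} (P_{0}) \cap Aut (P_{4})$ is immediate: $E(P_4)$ sits inside $Aut(P_4)$ by definition, and it maps into $E_{\infty} (P_{0})$ under the stabilization maps $E (P_{n}) \to E (P_{n+1})$. So the whole content is the reverse inclusion.

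Let $\sigma \in E_{\infty} (P_{0}) \cap Aut (P_{4})$. Since $E_{\infty} (P_{0})$ is the direct limit of the groups $E (P_{n})$, the image of $\sigma$ in $Aut_{\infty} (P_{0})$ is represented by some element of $E (P_{n})$; hence there is an integer $k \geq 0$ such that the stabilization $\sigma \oplus id_{R^{k}}$ lies in $E (P_{0} \oplus R^{2+k}) = E (P_{4+k})$. If $k = 0$ we are done. Otherwise I claim that for each $j$ with $1 \leq j \leq k$, the containment $\sigma \oplus id_{R^{j}} \in E (P_{0} \oplus R^{2+j})$ already implies $\sigma \oplus id_{R^{j-1}} \in E (P_{0} \oplus R^{2+(j-1)})$. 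Granting the claim, a descending induction from $j = k$ down to $j = 1$ yields $\sigma \in E (P_{0} \oplus R^{2}) = E (P_{4})$.

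To prove the claim, set $\tau := \sigma \oplus id_{R^{j-1}} \in Aut (P_{0} \oplus R^{j+1})$ and apply Theorem \ref{T2.14} with $P = P_{0}$ (so $m = 2$, the rank of $P_{0}$), with $d = \dim R = 2$, and with $n = j+1 \geq 2$. The hypothesis required there is that the class of Noetherian rings have property $\mathcal{P} (d+1, m+1) = \mathcal{P} (3,3)$; this holds because $3 \geq 3$, by the remarks on $\mathcal{P} (d,m)$ for $m \geq d$ following Theorem \ref{T2.12}. Moreover $\tau \oplus 1 = \sigma \oplus id_{R^{j}} \in E (P_{0} \oplus R^{j+2})$ by hypothesis, so Theorem \ref{T2.14} applies; since $R$ is regular, its last conclusion gives $\tau = \sigma \oplus id_{R^{j-1}} \in E (P_{0} \oplus R^{j+1}) = E (P_{0} \oplus R^{2+(j-1)})$, which is exactly the claim.

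Since this is essentially a repackaging of Theorem \ref{T2.14} (which in turn rests on Lemma \ref{L2.13}, the local-global principle Theorem \ref{T2.12}, and Vorst's theorem for the regular case), there is no deep obstacle. The only point demanding care is the bookkeeping of stabilization indices: one must check that the running index $n = j+1$ stays $\geq 2$ throughout the descent, so that Theorem \ref{T2.14} is legitimately applicable at every step, down to and including the final step that lands $\sigma$ in $E(P_4)$ itself.
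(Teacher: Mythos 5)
Your proof is correct and takes essentially the same route as the paper, whose entire argument is to invoke Theorem \ref{T2.14} with $P = P_{0}$ (so $m=2$, $d=2$, requiring $\mathcal{P}(3,3)$, which holds for Noetherian rings since $m \geq d$). Your explicit downward induction on the stabilization index merely spells out the descent that the paper leaves implicit, and the bookkeeping ($n = j+1 \geq 2$ at every step) checks out.
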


\begin{proof}
If $\sigma \in SL (P_{4})$ is stably elementary, then $\sigma \in E (P_{n+1})$ for some $n \geq 4$. We can now apply Theorem \ref{T2.14} to $P = P_{0}$ and deduce that there is an isotopy $\rho (X) \in SL (P_{n} [X])$ from $id_{P_{n}}$ to $\sigma$.\\
But since $R$ is regular, we know that $\rho_{\mathfrak{m}} (X)$ is stably elementary (for any maximal ideal $\mathfrak{m}$ of $R$); in fact, we can deduce that $\rho_{\mathfrak{m}} (X)$ is elementary: Since $\dim (R) = 2$, the spectrum of the $3$-dimensional ring $R_{\mathfrak{m}}[X]$ is the union of a finite number of subspaces of dimension $\leq 2$ (cf. the last paragraph of \cite[Section 1.1]{R}). Hence it follows from \cite[Lemma 7.5]{SV} that the stable rank of $R_{\mathfrak{m}}[X]$ is at most $3$; in particular, $SL_{n} (R_{\mathfrak{m}}[X]) \cap E (R_{\mathfrak{m}}[X]) = E_{n} (R_{\mathfrak{m}}[X])$ and $\rho_{\mathfrak{m}} (X)$ is elementary.\\
Then Theorem \ref{T2.12} implies that $\rho (X) \in E (P_{n}[X])$ and hence $\sigma = \rho (1) \in E (P_{n})$. The theorem now follows by inductively repeating this argument and deducing that $\sigma \in E (P_{4})$.
\end{proof}

\begin{Thm}\label{T2.16}
With the notation of Section \ref{2.2}, we further assume that $P_{0}$ has rank $2$. Let $k$ be a perfect field with $\mathcal{P} (4,3)$. If $R$ is a regular affine $k$-algebra of dimension $3$, then $E_{\infty} (P_{0}) \cap Aut (P_{4}) = E (P_{4})$.
\end{Thm}

\begin{proof}
By a famous theorem of Vorst (cf. \cite{V}), we know that there is an equality $SL_{N} (R_{\mathfrak{p}} [X]) = E_{N} (R_{\mathfrak{p}} [X])$ for any prime $\mathfrak{p}$ of $R$ and $N \geq 4$. We can thus argue as in the proof of Theorem \ref{T2.15}.
\end{proof}

\begin{Thm}\label{T2.17}
With the notation of Section \ref{2.2}, we further assume that $P_{0}$ has rank $2$. Let $k$ be a perfect field with $\mathcal{P} (5,3)$. If $R$ is a regular affine $k$-algebra of dimension $4$, then $E_{\infty} (P_{0}) \cap Aut (P_{4}) = E (P_{4})$.
\end{Thm}

\begin{proof}
We can argue as in the proof of Theorem \ref{T2.16}.
\end{proof}

\subsection{Classification of stably isomorphic projective modules}\label{Classification of stably isomorphic projective modules}\label{2.4}

We consider the map

\begin{center}
$\phi_{n} : \mathcal{V}_{n} (R) \rightarrow \mathcal{V}_{n+1} (R), [P] \mapsto [P \oplus R]$,
\end{center}
 
from isomorphism classes of rank $n$ projective modules to rank $n+1$ projective modules and fix a projective module $P \oplus R$ representing an element of ${\mathcal{V}}_{n+1} (R)$ in the image of this map. An element $[P']$ of ${\mathcal{V}}_{n} (R)$ lies in the fiber over $[P \oplus R]$ if and only if there is an isomorphism $i: P' \oplus R \xrightarrow{\cong} P \oplus R$. Any such isomorphism yields an element of $Um (P \oplus R)$ given by the composite 

\begin{center}
$a (i): P \oplus R \xrightarrow{{i}^{-1}} P' \oplus R \xrightarrow{\pi_R} R$.
\end{center}

Note that if one chooses another module $P''$ representing the isomorphism class of $P'$ and any isomorphism $j: P'' \oplus R \xrightarrow{\cong} P \oplus R$, the resulting element $a (j)$ of $Um (P \oplus R)$ still lies in the same orbit of $Um (P \oplus R)/Aut (P \oplus R)$:
For if we choose an isomorphism $k: P' \xrightarrow{\cong} P''$, then we have an equality

\begin{center}
$a (i) = a (j) \circ ({j} {(k \oplus id_{R})} {i}^{-1})$.
\end{center}

Thus, we obtain a well-defined map

\begin{center}
$\phi_{n}^{-1} ([P \oplus R]) \rightarrow Um (P \oplus R)/Aut (P \oplus R)$.
\end{center}

Conversely, any element $a$ of $Um (P \oplus R)$ gives an element of ${\mathcal{V}}_{n} (R)$ lying over $[P \oplus R]$, namely $[P'] = [\ker (a)]$. Note that the kernels of two epimorphisms $P \oplus R \rightarrow R$ are isomorphic if these epimorphisms are in the same orbit in $Um (P \oplus R)/Aut (P \oplus R)$. Thus, we also obtain a well-defined map

\begin{center}
$Um (P \oplus R)/Aut(P \oplus R) \rightarrow \phi_{n}^{-1} ([P \oplus R])$.
\end{center}

One can easily check that the maps $\phi_{n}^{-1} ([P \oplus R]) \rightarrow Um (P \oplus R)/Aut (P \oplus R)$ and $Um (P \oplus R)/Aut(P \oplus R) \rightarrow \phi_{n}^{-1} ([P \oplus R])$ are inverse to each other. Note that $[P]$ corresponds to the class represented by the canonical projection $\pi_{R} : P \oplus R \rightarrow R$ under these bijections. In conclusion, we have a pointed bijection between the sets $Um (P \oplus R)/Aut(P \oplus R)$ and $\phi_{n}^{-1} ([P \oplus R])$ equipped with $[\pi_{R}]$ and $[P]$ as their respective basepoints. Moreover, we also obtain a (pointed) surjection $Um (P \oplus R)/E(P \oplus R) \rightarrow \phi_{n}^{-1} ([P \oplus R])$.

\section{The elementary symplectic Witt group}\label{The elementary symplectic Witt group}\label{3}

In this section, we briefly recall the definition of the so-called elementary symplectic Witt group $W_{E} (R)$. Primarily, it appears as the kernel of a homomorphism $W'_{E} (R) \rightarrow R^{\times}$ induced by the Pfaffian of alternating invertible matrices. As we will discuss, the group $W'_{E} (R)$ itself can be identified with a group denoted $V (R)$ and with $GW_{1}^{3} (R)$, a higher Grothendieck-Witt group of $R$. We will also prove some lemmas on the group $V (R)$, which will be used to prove the main results of this paper. Furthermore, we introduce a canonical $R^{\times}$-action on $V (R)$ and identify this action with an action of $R^{\times}$ on $GW_{1}^{3} (R)$ coming from the multiplicative structure of higher Grothendieck-Witt groups.
 
\subsection{The group $W'_E (R)$}\label{The group $W'_E (R)$}\label{3.1}

Let R be a commutative ring. For any $n \in \mathbb{N}$, we denote by $A_{2n} (R)$ the set of alternating invertible matrices of rank $2n$. We inductively define an element $\psi_{2n} \in  A_{2n} (R)$ by setting
 
\begin{center}
$\psi_2 =
\begin{pmatrix}
0 & 1 \\
- 1 & 0
\end{pmatrix}
$
\end{center}
 
\noindent and $\psi_{2n+2} = \psi_{2n} \perp \psi_2$. For any $m < n$, there is an embedding of $A_{2m} (R)$ into $A_{2n} (R)$ given by $M \mapsto M \perp \psi_{2n-2m}$. We denote by $A (R)$ the direct limit of the sets $A_{2n} (R)$ under these embeddings. Two alternating invertible matrices $M \in A_{2m} (R)$ and $N \in A_{2n} (R)$ are called equivalent, $M \sim N$, if there is an integer $s \in \mathbb{N}$ and a matrix $E \in E_{2n+2m+2s}$ such that

\begin{center}
$M \perp \psi_{2n+2s} = E^{t} (N \perp \psi_{2m+2s}) E$.
\end{center}

The set of equivalence classes $A(R)/{\sim}$ is denoted $W'_E (R)$. Since
 
\begin{center}
$
\begin{pmatrix}
0 & id_{s} \\
id_{r} & 0
\end{pmatrix}
\in E_{r+s} (R)$
\end{center}
 
for even $rs$, it follows that the orthogonal sum equips $W'_E (R)$ with the structure of an abelian monoid. As it is shown in \cite{SV}, this abelian monoid is actually an abelian group. An inverse for an element of $W'_E (R)$ represented by a matrix $N \in A_{2n} (R)$ is given by the element represented by the matrix $\sigma_{2n} N^{-1} \sigma_{2n}$, where the matrices $\sigma_{2n}$ are inductively defined by
 
\begin{center}
$\sigma_2 =
\begin{pmatrix}
0 & 1 \\
1 & 0
\end{pmatrix}
$
\end{center}
 
\noindent and $\sigma_{2n+2} = \sigma_{2n} \perp \sigma_2$.
Now recall that one can assign to any alternating invertible matrix $M$ an element $\mathit{Pf}(M)$ of $R^{\times}$ called the Pfaffian of $M$. The Pfaffian satisfies the following formulas:
 
\begin{itemize}
\item $\mathit{Pf}(M \perp N) = \mathit{Pf}(M) \mathit{Pf}(N)$ for all $M \in A_{2m} (R)$ and $N \in A_{2n} (R)$
\item $\mathit{Pf}(G^{t} N G) = \det (G) \mathit{Pf}(N)$ for all $G \in GL_{2n} (R)$ and $N \in A_{2n} (R)$
\item ${\mathit{Pf} (N)}^{2} = \det (N)$ for all $N \in A_{2n} (R)$
\item $\mathit{Pf} (\psi_{2n}) = 1$ for all $n \in \mathbb{N}$
\end{itemize}

\noindent Therefore the Pfaffian determines a group homomorphism $\mathit{Pf}: W'_E (R) \rightarrow R^{\times}$; its kernel is denoted $W_E (R)$ and is called the elementary symplectic Witt group of $R$. Note that the homomorphism $\mathit{Pf}: W'_E (R) \rightarrow R^{\times}$ is split by the homomorphism $R^{\times} \rightarrow W'_E (R)$ which assigns to any $t \in R^{\times}$ the class in $W'_E (R)$ represented by the matrix

\begin{center}
$\begin{pmatrix}
0 & t \\
-t & 0
\end{pmatrix}
$.
\end{center}

Hence $W'_E (R) \cong W_E (R) \oplus R^{\times}$.

\subsection{The group V(R)}\label{The group V(R)}\label{3.2}

Again, let $R$ be a commutative ring. Consider the set of triples $(P, g, f)$, where $P$ is a finitely generated projective $R$-module and $f,g$ are alternating isomorphisms on $P$. Two such triples $(P, f_0, f_1)$ and $(P', f'_0, f'_1)$ are called isometric if there is an isomorphism $h: P \rightarrow P^{'}$ such that $f_{i} = h^{\vee} f'_{i} h$ for $i=0,1$. We denote by $[P, g, f]$ the isometry class of the triple $(P, g, f)$.\\
Let $V (R)$ be the quotient of the free abelian group on isometry classes of triples as above by the subgroup generated by the relations
 
\begin{itemize}
\item $[P \oplus P', g \perp g', f \perp f'] = [P, g, f] + [P', g', f']$ for alternating isomorphisms $f,g$ on $P$ and $f',g'$ on $P'$,
\item $[P, f_0, f_1] + [P, f_1, f_2] = [P, f_0, f_2]$ for alternating isomorphisms $f_{0}, f_{1}, f_{2}$ on $P$.
\end{itemize}

Note that these relations yield the useful identities
 
\begin{itemize}
\item $[P, f, f] = 0$ in $V (R)$ for any alternating isomorphism $f$ on $P$,
\item $[P, g, f] = - [P, f, g]$ in $V (R)$ for alternating isomorphisms $f,g$ on $P$,
\item $[P, g, {\beta}^{\vee} {\alpha}^{\vee} f \alpha \beta] = [P, f, {\alpha}^{\vee} f \alpha] + [P, g, {\beta}^{\vee} f \beta]$ in $V (R)$ for all automorphisms $\alpha, \beta$ of $P$ and alternating isomorphisms $f,g$ on $P$.
\end{itemize}

We may also restrict this construction to free $R$-modules of finite rank. The corresponding group will be denoted $V_{\mathit{free}}(R)$. Note that there is an obvious group homomorphism $V_{\mathit{free}}(R) \rightarrow V (R)$.

This homomorphism can be seen to be an isomorphism as follows: For any finitely generated projective $R$-module $P$, we call

\begin{center}
$H_P =
\begin{pmatrix}
0 & id_{P^{\vee}} \\
-can & 0
\end{pmatrix}
: P \oplus P^{\vee} \rightarrow P^{\vee} \oplus P^{\vee\vee}$
\end{center}

the hyperbolic isomorphism on $P$.
 
Now let $(P, g, f)$ be a triple as above. Since $P$ is a finitely generated projective $R$-module, there is another $R$-module $Q$ such that $P \oplus Q \cong R^{n}$ for some $n \in \mathbb{N}$. In particular, $P \oplus P^{\vee} \oplus Q \oplus Q^{\vee}$ is free of rank $2n$. Therefore the triple
 
\begin{center}
$(P \oplus P^{\vee} \oplus Q \oplus Q^{\vee}, g \perp can\; g^{-1} \perp H_Q, f \perp can\; g^{-1} \perp H_Q)$
\end{center}
 
represents an element of $V_{\mathit{free}}(R)$; this element is independent of the choice of $Q$. It follows that the assignment
 
\begin{center}
$(P, g, f) \mapsto (P \oplus P^{\vee} \oplus Q \oplus Q^{\vee}, g \perp can\; g^{-1} \perp H_Q, f \perp can\; g^{-1} \perp H_Q)$
\end{center}
 
induces a well-defined group homomorphism
 
\begin{center}
$V (R) \rightarrow V_{\mathit{free}}(R)$.
\end{center}
 
Since
 
\begin{center}
$[P, g, f] = [P \oplus P^{\vee} \oplus Q \oplus Q^{\vee}, g \perp can\; g^{-1} \perp H_Q, f \perp can\; g^{-1} \perp H_Q]$
\end{center}

in $V (R)$, this homomorphism is actually inverse to the canonical morphism $V_{\mathit{free}}(R) \rightarrow V (R)$. Thus, $V_{\mathit{free}}(R) \cong V (R)$.\\

In order to discuss the identification of $V (R)$ with the group $W'_{E} (R)$ described in the previous section, we first need to prove Lemma \ref{L3.1} and Corollaries \ref{C3.2} and \ref{C3.3} below. They will also be used in the proofs of the main results of this paper.

\begin{Lem}\label{L3.1}
Let $P = \bigoplus_{i=1}^{n} P_{i}$ be a finitely generated projective module and $f_{i}$ alternating isomorphisms on $P_{i}$, $i=1,...,n$. Let $f = f_1 \perp ... \perp f_n$. Then $[P, f, {\varphi}^{\vee} f {\varphi}] = 0$ in $V (R)$ for any element ${\varphi}$ of the commutator subgroup of $Aut (P)$. In particular, the same holds for every element of $E (P)$ with respect to the given decomposition.
\end{Lem}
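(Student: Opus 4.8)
The plan is to exhibit the assignment $q\colon Aut(P)\to V(R)$, $\varphi\mapsto [P,f,\varphi^{\nu}f\varphi]$, as a group homomorphism. Since $V(R)$ is abelian, any homomorphism out of $Aut(P)$ is trivial on the commutator subgroup, which is precisely the first assertion. Before checking multiplicativity I would record two preliminary points: first, that $q$ takes values in $V(R)$ at all, i.e. that $\varphi^{\nu}f\varphi\colon P\to P^{\nu}$ is again a skew-symmetric isomorphism for every $\varphi\in Aut(P)$ --- this follows from the skew-symmetry of $f$ together with the naturality $can\circ\varphi=\varphi^{\nu\nu}\circ can$ of the evaluation isomorphism; second, that $q(id_P)=[P,f,f]=0$ by the first of the three identities listed after the definition of $V(R)$.

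For the homomorphism property I would invoke the third of those identities directly: applied with $g=f$, $\alpha=\varphi$, $\beta=\psi$ it reads $[P,f,\psi^{\nu}\varphi^{\nu}f\varphi\psi]=[P,f,\varphi^{\nu}f\varphi]+[P,f,\psi^{\nu}f\psi]$, i.e. $q(\varphi\psi)=q(\varphi)+q(\psi)$. If one prefers to argue from scratch, one inserts the intermediate form $\psi^{\nu}f\psi$ via $[P,f_0,f_1]+[P,f_1,f_2]=[P,f_0,f_2]$ to obtain
\[
q(\varphi\psi)=[P,f,\psi^{\nu}f\psi]+[P,\psi^{\nu}f\psi,\psi^{\nu}(\varphi^{\nu}f\varphi)\psi],
\]
where the first summand is $q(\psi)$ and, since $\psi$ is an isometry from the triple $(P,\psi^{\nu}f\psi,\psi^{\nu}(\varphi^{\nu}f\varphi)\psi)$ to $(P,f,\varphi^{\nu}f\varphi)$, the second summand is $q(\varphi)$. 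Either way $q$ is a homomorphism into the abelian group $V(R)$, hence vanishes on $[Aut(P),Aut(P)]$; this gives $[P,f,\varphi^{\nu}f\varphi]=0$ for every element $\varphi$ of the commutator subgroup, and a fortiori for every product of commutators.

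For the final sentence of the lemma it then suffices to know that $E(P)=E(P_1,\dots,P_n)$ is contained in the commutator subgroup of $Aut(P)$: this group is generated by the elementary automorphisms relative to the given decomposition, and each such generator is a commutator in $Aut(P)$ by Corollary \ref{C2.5} (in combination with the commutator formulas of Lemma \ref{L2.3}). Applying the first part to these elements finishes the proof.

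The argument is essentially formal and I do not anticipate a genuine obstacle. The only places that warrant care are the small amount of bookkeeping in $V(R)$ --- confirming that $\varphi^{\nu}f\varphi$ is skew-symmetric and that the two triples above are literally isometric via $\psi$ --- and, for the very last step, making sure the hypotheses of Corollary \ref{C2.5} are satisfied by the decomposition at hand, so that each elementary automorphism really is a commutator in the full group $Aut(P)$.
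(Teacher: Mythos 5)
Your treatment of the first assertion is correct and is exactly the paper's argument: the third identity listed after the definition of $V(R)$ makes $\varphi\mapsto[P,f,\varphi^{\nu}f\varphi]$ a homomorphism into the abelian group $V(R)$, hence trivial on the commutator subgroup of $Aut(P)$. (The paper writes out the cancellation for a single commutator $\varphi_{1}\varphi_{2}\varphi_{1}^{-1}\varphi_{2}^{-1}$ explicitly, but the content is the same, and your ``from scratch'' verification of multiplicativity via the isometry $\psi$ is also fine.)

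The second assertion, however, has a genuine gap, and you have in fact put your finger on it yourself without resolving it. Corollary \ref{C2.5} only shows that $\varphi_{s}$, for $s\colon P_{j}\to P_{i}$, is a commutator under the hypothesis that some \emph{other} summand $P_{k}$ of the given decomposition equals $P_{i}$ or $P_{j}$. For a general decomposition --- say $n=2$ with $P_{1}\not\cong P_{2}$ --- this hypothesis fails, and there is no reason for an elementary automorphism to lie in $[Aut(P),Aut(P)]$; so the inclusion $E(P)\subset[Aut(P),Aut(P)]$ that your last step rests on is simply not available in general. The missing idea is a stabilization: by the first relation in the definition of $V(R)$ one may add the trivial orthogonal summand $[P_{i},f_{i},f_{i}]=0$, i.e. replace the triple $(P,\,f,\,\varphi_{s}^{\nu}f\varphi_{s})$ by $(P\oplus P_{i},\,f\perp f_{i},\,(\varphi_{s}\oplus id_{P_{i}})^{\nu}(f\perp f_{i})(\varphi_{s}\oplus id_{P_{i}}))$ without changing the class in $V(R)$. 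In the enlarged decomposition the summand $P_{i}$ occurs twice, so Corollary \ref{C2.5} does apply, $\varphi_{s}\oplus id_{P_{i}}$ is a commutator in $Aut(P\oplus P_{i})$, and the first part of the lemma, applied to $P\oplus P_{i}$ with the form $f\perp f_{i}$, gives the vanishing. This is precisely the paper's remark ``since we can add the summand $[P_{i},f_{i},f_{i}]=0$, we may assume that we are in the situation of Corollary \ref{C2.5}''; with this one extra step your argument closes.
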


\begin{proof}
By the third of the useful identities listed above, we have
 
\begin{center}
$[P, f, {\varphi}_{2}^{\vee} {\varphi}_{1}^{\vee} f {\varphi}_{1} {\varphi}_{2}] = [P, f, {\varphi}_{1}^{\vee} f {\varphi}_{1}] + [P, f, {\varphi}_{2}^{\vee} f {\varphi}_{2}]$.
\end{center}
 
Therefore we only have to prove the first statement for commutators. Now if $\varphi = {\varphi}_{1} {\varphi}_{2} {\varphi}_{1}^{-1} {\varphi}_{2}^{-1}$ is a commutator, then the formula above yields
 
\begin{center}
$[P, f, {\varphi}^{\vee} f {\varphi}] = [P, f, {\varphi}_{1}^{\vee} f {\varphi}_{1}] + [P, f, {\varphi}_{2}^{\vee} f {\varphi}_{2}] + [P, f, {({\varphi}_{1}^{-1})}^{\vee} f {\varphi}_{1}^{-1}]+ [P, f, {({\varphi}_{2}^{-1})}^{\vee} f {\varphi}_{2}^{-1}] = 0$,
\end{center}
 
which proves first part of the lemma.\\
For the second part, observe that by the formula above we only need to prove the statement for elementary automorphisms. So let $\varphi_{s}$ be the elementary automorphism induced by $s: P_{j} \rightarrow P_{i}$. Since we can add the summand $[P_{i}, f_{i}, f_{i}] = 0$, we may assume that we are in the situation of Corollary \ref{C2.5}. Therefore we may assume that $\varphi_{s}$ is a commutator and the second statement then follows from the first part of the lemma.
\end{proof}

\begin{Kor}\label{C3.2}
Let $P$ be a finitely generated projective $R$-module and $\chi$ an alternating isomorphism on $P$. Then $[P \oplus R^{2n}, \chi \perp \psi_{2n}, {\varphi}^{\vee} (\chi \perp \psi_{2n}) {\varphi}] = 0$ in $V (R)$ for any elementary automorphism ${\varphi}$ of $P \oplus R^{2n}$. In particular, if $f$ is any alternating isomorphism on $P \oplus R^{2n}$, it follows that there is an equality $[P \oplus R^{2n}, \chi \perp \psi_{2n}, {\varphi}^{\vee} f {\varphi}] = [P \oplus R^{2n}, \chi \perp \psi_{2n}, f]$ in $V (R)$.
\end{Kor}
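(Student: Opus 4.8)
The plan is to deduce the first assertion directly from Lemma \ref{L3.1}, and then derive the ``in particular'' clause from the useful identities for $V(R)$. For the first part, I would start with the skew-symmetric isomorphism $\chi \perp \psi_2 \perp \dots \perp \psi_2$ on $P \oplus R^{2n} = P \oplus R^2 \oplus \dots \oplus R^2$, viewed with respect to the decomposition into $P$ and $n$ copies of $R^2$; since $\chi$ is skew-symmetric on $P$ and each $\psi_2$ is a skew-symmetric isomorphism on the corresponding $R^2$, Lemma \ref{L3.1} applies verbatim once we know that $\varphi$ lies in $E(P \oplus R^{2n})$ with respect to this decomposition. The only subtlety is that the elementary automorphisms appearing in the statement are those of $P \oplus R^{2n}$ with respect to the \emph{finer} decomposition $P \oplus Re_3 \oplus \dots \oplus Re_{2n+2}$ into rank-one free summands (plus $P$), whereas Lemma \ref{L3.1} is phrased for an arbitrary direct-sum decomposition. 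So the first step is the routine observation that an elementary automorphism with respect to the finer decomposition is in particular an elementary automorphism with respect to the coarser decomposition $P \oplus R^2 \oplus \dots \oplus R^2$ (a homomorphism $Re_j \to Re_i$ or $Re_j \to P$ extends by zero to a homomorphism between the appropriate $R^2$-blocks, or between an $R^2$-block and $P$), and hence lies in $E(P \oplus R^2 \oplus \dots \oplus R^2)$. With this in hand, Lemma \ref{L3.1} gives $[P \oplus R^{2n}, \chi \perp \psi_{2n}, \varphi^{\nu}(\chi \perp \psi_{2n})\varphi] = 0$ in $V(R)$ for every such $\varphi$.

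For the second assertion, let $f$ be an arbitrary skew-symmetric isomorphism on $P \oplus R^{2n}$ and write $\chi' = \chi \perp \psi_{2n}$ for brevity. Using the relation $[Q, f_0, f_1] + [Q, f_1, f_2] = [Q, f_0, f_2]$ with $Q = P \oplus R^{2n}$, $f_0 = \chi'$, $f_1 = f$, $f_2 = \varphi^{\nu} f \varphi$, together with the already-established vanishing applied in the form $[Q, \chi', \varphi^{\nu}\chi'\varphi] = 0$ (equivalently $[Q, f, \varphi^\nu f \varphi] = 0$, which follows from the same Lemma \ref{L3.1} argument applied with $f$ in place of $\chi'$ — or more cleanly, one transports along $\varphi$ using the third useful identity), one obtains $[Q, \chi', \varphi^{\nu} f \varphi] = [Q, \chi', f] + [Q, f, \varphi^{\nu} f \varphi]$, and the last term vanishes because $\varphi$ is elementary. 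Hence $[Q, \chi', \varphi^{\nu} f \varphi] = [Q, \chi', f]$, as desired.

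I do not expect any serious obstacle here: the corollary is essentially a repackaging of Lemma \ref{L3.1}. The only point requiring a moment's care is the compatibility of the two notions of ``elementary automorphism'' for the two nested decompositions of $P \oplus R^{2n}$, and the bookkeeping with the additive relations in $V(R)$ to pass from the vanishing statement to the invariance statement; both are straightforward once spelled out.
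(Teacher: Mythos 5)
Your overall strategy --- pass to the coarser decomposition $P \oplus R^{2} \oplus \dots \oplus R^{2}$, for which $\chi \perp \psi_{2n} = \chi \perp \psi_{2} \perp \dots \perp \psi_{2}$ is an orthogonal sum of skew-symmetric isomorphisms, and then quote Lemma \ref{L3.1} --- is the right one, and this is essentially what the paper does. However, the ``routine observation'' you use to bridge the two decompositions fails for some of the generators, and those are precisely the generators to which the corollary is later applied. A generator of $E(P \oplus R^{2n})$ of the form $id + s$ with $s: Re_{j} \rightarrow Re_{i}$, where $Re_{i}$ and $Re_{j}$ lie in the \emph{same} $R^{2}$-block, extends by zero to a map from that block to itself; $id+s$ is therefore \emph{not} an elementary automorphism with respect to the coarse decomposition (an elementary automorphism must be induced by a map between two distinct summands), and for $n=1$ it is not even clear that it lies in $E(P \oplus R^{2})$ for the two-block decomposition. (In the proof of Theorem \ref{T4.1} the corollary is invoked exactly for such a $\varphi_{R} = id - d_{R} e_{4}$ with $d_{R}: Re_{3} \rightarrow Re_{4}$.) The repair is to use the \emph{commutator} clause of Lemma \ref{L3.1} instead: by Corollary \ref{C2.5}, every generator of $E(P \oplus R^{2n})$ with respect to the fine decomposition is a commutator in $Aut(P \oplus R^{2n})$, since at least one of the two summands involved is a rank-one free summand and there are $2n \geq 2$ of those; the first assertion of Lemma \ref{L3.1}, applied with the coarse decomposition, then yields the vanishing for all of $E(P \oplus R^{2n})$.

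A second, smaller imprecision concerns the ``in particular'' clause: your bookkeeping requires $[Q, f, \varphi^{\nu} f \varphi] = 0$ for an \emph{arbitrary} skew-symmetric isomorphism $f$ on $Q = P \oplus R^{2n}$, which you justify by ``the same Lemma \ref{L3.1} argument with $f$ in place of $\chi'$''. As stated, Lemma \ref{L3.1} only applies to forms that are orthogonal sums adapted to a decomposition, which an arbitrary $f$ need not be. The claim is nonetheless true (the commutator half of the proof of Lemma \ref{L3.1} never uses the decomposition of the form), but it is not a citation of the lemma as stated. The paper's route avoids the issue entirely: write $[Q, \chi', \varphi^{\nu} f \varphi] = [Q, \chi', \varphi^{\nu} \chi' \varphi] + [Q, \varphi^{\nu} \chi' \varphi, \varphi^{\nu} f \varphi]$; the first summand vanishes by the first part of the corollary, and the second equals $[Q, \chi', f]$ because the triples $(Q, \varphi^{\nu} \chi' \varphi, \varphi^{\nu} f \varphi)$ and $(Q, \chi', f)$ are isometric via $\varphi$ itself. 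I recommend adopting that version of the second step.
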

 
\begin{proof}
The first part follows directly from the previous lemma. The second part is then a direct consequence of the second relation given in the definition of the group $V (R)$.
\end{proof}
 
\begin{Kor}\label{C3.3}
For an arbitrary elementary matrix $E \in E_{2n} (R)$, we have $[R^{2n}, \psi_{2n}, E^{t} \psi_{2n} E] = 0$ in $V (R)$. In particular, for any alternating matrix $N \in GL_{2n} (R)$, we have $[R^{2n}, \psi_{2n}, N] = [R^{2n}, \psi_{2n}, E^{t} N E]$ in $V (R)$.
\end{Kor}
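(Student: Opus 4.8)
The plan is to deduce both assertions from Lemma \ref{L3.1} together with the relations defining $V(R)$, after noting that for a matrix the $R$-linear dual map is the transpose, i.e. $E^{\nu} = E^{t}$. Throughout I equip $R^{2n}$ with the decomposition $R^{2n} = R^{2} \oplus \dots \oplus R^{2}$ into $n$ copies of $R^{2}$, so that $\psi_{2n} = \psi_{2} \perp \dots \perp \psi_{2}$ is an orthogonal sum of the skew-symmetric isomorphisms $\psi_{2}$; this exhibits $\psi_{2n}$ in the form required by Lemma \ref{L3.1}.

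First I would treat the case $n \geq 2$. Since then $2n \geq 3$, the Steinberg relation $e_{ij}(\lambda) = [e_{ik}(\lambda), e_{kj}(1)]$ for pairwise distinct indices $i,j,k$ exhibits every generator of $E_{2n}(R)$ as a commutator of elements of $E_{2n}(R)$; hence $E_{2n}(R)$ is perfect and in particular $E_{2n}(R) = [E_{2n}(R), E_{2n}(R)] \subseteq [Aut(R^{2n}), Aut(R^{2n})]$. Applying Lemma \ref{L3.1} with $P = R^{2n}$, $f = \psi_{2n}$ and $\varphi = E$ (which lies in the commutator subgroup of $Aut(R^{2n})$) then gives $[R^{2n}, \psi_{2n}, E^{\nu}\psi_{2n}E] = 0$, and since $E^{\nu} = E^{t}$ this is the first assertion. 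The case $n = 1$ has to be handled separately, because $E_{2}(R)$ need not be perfect: for $E \in E_{2}(R)$ we have $\det(E) = 1$, so by the Pfaffian formulas $Pf(E^{t}\psi_{2}E) = \det(E)\,Pf(\psi_{2}) = 1$; as a $2 \times 2$ alternating matrix is determined by its Pfaffian (here $2 \in R^{\times}$), this forces $E^{t}\psi_{2}E = \psi_{2}$, whence $[R^{2}, \psi_{2}, E^{t}\psi_{2}E] = [R^{2}, \psi_{2}, \psi_{2}] = 0$. (One may also reduce $n = 1$ to $n = 2$ directly: the first relation defining $V(R)$ gives $[R^{2}, \psi_{2}, E^{t}\psi_{2}E] = [R^{4}, \psi_{4}, (E \perp I_{2})^{t}\psi_{4}(E \perp I_{2})]$, and $E \perp I_{2} \in E_{4}(R)$.)

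For the "in particular" statement, let $N \in GL_{2n}(R)$ be alternating, so $N$ --- and hence also $E^{t}NE$ --- is a skew-symmetric isomorphism on $R^{2n}$. The second relation defining $V(R)$ gives
\[
[R^{2n}, \psi_{2n}, E^{t}NE] = [R^{2n}, \psi_{2n}, E^{t}\psi_{2n}E] + [R^{2n}, E^{t}\psi_{2n}E, E^{t}NE].
\]
The first summand on the right vanishes by the assertion just proved. For the second, $E$ is an isometry of triples $(R^{2n}, E^{t}\psi_{2n}E, E^{t}NE) \xrightarrow{\cong} (R^{2n}, \psi_{2n}, N)$ in the sense of the definition of $V(R)$, so that summand equals $[R^{2n}, \psi_{2n}, N]$; combining, $[R^{2n}, \psi_{2n}, E^{t}NE] = [R^{2n}, \psi_{2n}, N]$, as claimed.

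There is no real obstacle here --- the whole argument is bookkeeping with the relations of $V(R)$ --- but the point that needs genuine care is the low-rank case $n = 1$: there the reduction "$E_{2n}(R)$ lies in the commutator subgroup of $GL_{2n}(R)$" fails, and one has to invoke the Pfaffian (or stabilize to $n = 2$) instead. One should also take care to check that every matrix appearing is a bona fide skew-symmetric isomorphism, so that the relations of $V(R)$ are applicable.
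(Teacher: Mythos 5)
Your proof is correct and follows the route the paper intends: the corollary is stated there without an explicit proof, being an immediate consequence of Lemma \ref{L3.1} (applied to the decomposition $R^{2n}=\bigoplus R^{2}$ with $\psi_{2n}=\psi_{2}\perp\dots\perp\psi_{2}$) together with the second defining relation of $V(R)$ and the isometry given by $E$ itself, exactly as you argue. Your separate treatment of the case $n=1$, where perfectness of $E_{2}(R)$ fails and one falls back on $SL_{2}=Sp_{2}$ (or on stabilization to $n=2$), is a worthwhile precaution that the paper glosses over.
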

 
Using the previous corollary, the group $V_{\mathit{free}}(R)$ can be identified with $W'_E (R)$ as follows: If $M \in A_{2m} (R)$ represents an element of $W'_E (R)$, then we assign to it the class in $V_{\mathit{free}}(R)$ represented by $[R^{2m}, \psi_{2m}, M]$. By Corollary \ref{C3.3}, this assignment descends to a well-defined homomorphism $\nu: W'_E (R) \rightarrow V_{\mathit{free}}(R)$.\\
Now let us describe the inverse $\xi: V_{\mathit{free}}(R) \rightarrow W'_{E} (R)$ to this homomorphism. Let $(L, g, f)$ be a triple with $L$ free and $g,f$ alternating isomorphisms on $L$. We can choose an isomorphism $\alpha: R^{2n} \xrightarrow{\cong} L$ and consider the alternating isomorphism
 
\begin{center}
$({\alpha}^{\vee} f \alpha) \perp \sigma_{2n} {({\alpha}^{\vee} g \alpha)}^{-1} \sigma_{2n}^{\vee}: R^{2n} \oplus {(R^{2n})}^{\vee} \rightarrow {(R^{2n})}^{\vee} \oplus R^{2n}$.
\end{center}
 
With respect to the standard basis of $R^{2n}$ and its dual basis of ${(R^{2n})}^{\vee}$, we may interpret this alternating isomorphism as an element of $A_{4n} (R)$ and then consider its class $\xi ([L, g, f])$ in $W'_E (R)$. In fact, this class is independent of the choice of the isomorphism $\alpha: R^{2n} \xrightarrow{\cong} L$. If $\beta: R^{2n} \xrightarrow{\cong} L$ is another isomorphism, then it suffices to show that the alternating matrix $M$ corresponding to $\alpha^{\vee} f \alpha \perp \beta^{\vee} g \beta$ is equivalent in $W'_E (R)$ to the alternating matrix corresponding to $\beta^{\vee} f \beta \perp \alpha^{\vee} g \alpha$. But there is an isometry $\gamma = ({\alpha}^{-1}\beta) \perp ({\beta}^{-1}\alpha)$ from $\alpha^{\vee} f \alpha \perp \beta^{\vee} g \beta$ to $\beta^{\vee} f \beta \perp \alpha^{\vee} g \alpha$, which is an elementary automorphism by Whitehead's lemma. One then also checks easily that the defining relations of $V_{\mathit{free}}(R)$ are also satisfied by the assignment above. Hence it follows that this assignment induces a well-defined homomorphism $\xi: V_{\mathit{free}}(R) \rightarrow W'_E (R)$. By construction, $\nu$ and $\xi$ are obviously inverse to each other and therefore identify $W'_E (R)$ with $V_{\mathit{free}}(R)$.

In order to conclude this section, let us now describe some group actions on $V (R)$: For any finitely generated projective $R$-module $P$, alternating isomorphism $\chi: P \rightarrow P^{\vee}$ and $u \in R^{\times}$, the morphism $u \cdot \chi: P \rightarrow P^{\vee}$ is again an alternating isomorphism on $P$. Note that $u \cdot \chi$ is canonically isometric to the alternating isomorphism $u \otimes \chi: R \otimes P \xrightarrow{u \otimes \chi} R \otimes P^{\vee} \cong {(R \otimes P)}^{\vee}$ and we therefore have an equality
 
\begin{center}
$[P, u \cdot \chi_{2}, u \cdot \chi_{1}] = [R \otimes P, u \otimes \chi_{2}, u \otimes \chi_{1}]$ in $V (R)$
\end{center}
 
for all $\chi_{1}, \chi_{2}$. One can check easily that the assignment
 
\begin{center}
$(u, (P, \chi_{2}, \chi_{1})) \mapsto (P, u \cdot \chi_{2}, u \cdot \chi_{1})$
\end{center}
 
descends to a well-defined action of $R^{\times}$ on $V (R)$.\\
Now let us assume that $2 \in R^{\times}$ and, furthermore, let $\varphi: Q \rightarrow Q^{\vee}$ be a symmetric isomorphism on a finitely generated projective $R$-module $Q$. Then, for any skew-symmetric isomorphism $\chi$ on a finitely generated projective $R$-module $P$ as above, the homomorphism $\varphi \otimes \chi:  Q \otimes P \rightarrow Q^{\vee} \otimes P^{\vee} \cong {(Q \otimes P)}^{\vee}$ is again a skew-symmetric isomorphism on $Q \otimes P$. One can check easily that the assignment
 
\begin{center}
$((Q, \varphi), (P, \chi_{2}, \chi_{1})) \mapsto (Q \otimes P, \varphi \otimes \chi_{2}, \varphi \otimes \chi_{1})$
\end{center}
 
induces a well-defined action of the Grothendieck-Witt group $GW (R) = GW_{0}^{0} (R)$ of $R$ on $V (R)$.

\subsection{Grothendieck-Witt groups}\label{Grothendieck-Witt groups}\label{3.3}

In this section, we recall some basics about the theory of higher Grothendieck-Witt groups, which are a modern version of Hermitian K-theory. The general references of the modern theory are \cite{MS1}, \cite{MS2} and \cite{MS3}.\\
We assume $R$ to be a ring such that $2 \in R^{\times}$. Then we consider the category $P (R)$ of finitely generated projective $R$-modules and the category $C^{b} (R)$ of bounded complexes of objects in $P (R)$. The category $C^{b} (R)$ inherits a natural structure of an exact category from $P (R)$ by declaring $C'_{\bullet} \rightarrow C_{\bullet} \rightarrow C''_{\bullet}$ exact if and only if $C'_{n} \rightarrow C_{n} \rightarrow C''_{n}$ is exact for all $n$. The duality $Hom_{R} (-, L)$ associated to any line bundle $L$ induces a duality $\#_{L}$ on $C^{b} (R)$ and the identification of a finitely generated projective $R$-module with its double dual induces a natural isomorphism of functors $\varpi_{L}: id \xrightarrow{\sim} \#_{L}\#_{L}$ on $C^{b} (R)$. Moreover, the translation functor $T: C^{b} (R) \rightarrow C^{b} (R)$ yields new dualities $\#_{L}^{j} = T^{j} \#_{L}$ and natural isomorphisms $\varpi_{L}^{j} = {(-1)}^{j (j+1)/2} \varpi_{L}$. We say that a morphism in $C^{b} (R)$ is a weak equivalence if and only if it is a quasi-isomorphism and we denote by $qis$ the class of quasi-isomorphisms. For all $j$, the quadruple $(C^{b} (R), qis, \#_{L}^{j}, \varpi_{L}^{j})$ is an exact category with weak equivalences and strong duality (cf. \cite[\S 2.3]{MS2}).\\
Following \cite{MS2}, one can associate a Grothendieck-Witt space $\mathcal{GW}$ to any exact category with weak equivalences and strong duality. The (higher) Grothendieck-Witt groups are then defined to be its homotopy groups:
 
\begin{Def}
For any $i \geq 0$, let $\mathcal{GW} (C^{b} (R), qis, \#_{L}^{j}, \varpi_{L}^{j})$ be the Grothendieck-Witt space associated to the quadruple $(C^{b} (R), qis, \#_{L}^{j}, \varpi_{L}^{j})$ as above. Then we define ${GW}_{i}^{j} (R, L) = \pi_{i} \mathcal{GW} (C^{b} (R), qis, \#_{L}^{j}, \varpi_{L}^{j})$. If $L = R$, then we set ${GW}_{i}^{j} (R) = {GW}_{i}^{j} (R, L)$.
\end{Def}
 
The groups $GW_{i}^{j} (R, L)$ are $4$-periodic in $j$ and coincide with Hermitian K-theory and U-theory as defined by Karoubi (cf. \cite{MK1} and \cite{MK2}), at least if $2 \in R^{\times}$ (see \cite[Remark 4.13]{MS1} and \cite[Theorems 6.1-2]{MS3}). In particular, we have isomorphisms $K_{i}O (R) = GW_{i}^{0} (R)$, $_{-1}U_{i} (R) = GW_{i}^{1} (R)$, $K_{i}Sp (R) = GW_{i}^{2} (R)$ and $U_{i} (R) = GW_{i}^{3} (R)$.\\
The group of our particular interest is $GW_{1}^{3} (R) = U_{1} (R)$. Indeed, it is argued in \cite{FRS} that there is a natural isomorphism $GW_{1}^{3} (R) \cong V_{\mathit{free}}(R) \cong V (R)$.\\
The Grothendieck-Witt groups defined as above carry a multiplicative structure. Indeed, the tensor product of complexes induces product maps
 
\begin{center}
${GW}_{i}^{j} (R, L_{1}) \times GW_{r}^{s} (R, L_{2}) \rightarrow GW_{i+r}^{j+s} (R, L_{1} \otimes L_{2})$
\end{center}
 
for all $i,j,r,s$ and line bundles $L_{1},L_{2}$ (cf. \cite[\S 9.2]{MS3}). In general, it is (probably) difficult to give explicit descriptions of this multiplicative structure; nevertheless, if we restrict ourselves to smooth algebras over a perfect field $k$ (with $char (k) \neq 2$), then it is known (see \cite[Theorem 3.1]{JH}) that Grothendieck-Witt groups are representable in the (pointed) $\mathbb{A}^{1}$-homotopy category $\mathcal{H}_{\bullet} (k)$ as defined by Morel and Voevodsky. As a matter of fact, if we let $R$ be a smooth affine $k$-algebra over a perfect field $k$ with $char (k) \neq 2$ and $X = Spec (R)$, it is shown that there are spaces $\mathcal{GW}^{j}$ such that
 
\begin{center}
$[S_{s}^{i} \wedge X_{+}, \mathcal{GW}^{j}]_{\mathbb{A}^{1}, \bullet} = GW_{i}^{j} (R)$,
\end{center}
 
i.e. the spaces $\mathcal{GW}^{j}$ represent the higher  Grothendieck-Witt groups. In order to make these spaces more explicit, we consider for all $n \in \mathbb{N}$ the closed embeddings $GL_{n} \rightarrow O_{2n}$ and $GL_{n} \rightarrow Sp_{2n}$ induced by

\begin{center}
$M \mapsto
\begin{pmatrix}
M & 0 \\
0 & {(M^{-1})}^{t}
\end{pmatrix}
$.
\end{center}

These embeddings are compatible with the standard stabilization embeddings $GL_{n} \rightarrow GL_{n+1}$, $O_{2n} \rightarrow O_{2n+2}$ and $Sp_{2n} \rightarrow Sp_{2n+2}$. Taking direct limits over all $n$ with respect to the induced maps $O_{2n}/GL_{n} \rightarrow O_{2n+2}/GL_{n+1}$ and $Sp_{2n}/GL_{n} \rightarrow Sp_{2n+2}/GL_{n+1}$, we obtain spaces $O/GL$ and $Sp/GL$. Similarly, the natural inclusions $Sp_{2n} \rightarrow GL_{2n}$ are compatible with the standard stabilization embeddings and we obtain a space $GL/Sp = colim_{n}\; GL_{2n}/Sp_{2n}$. As proven in \cite[Theorems 8.2 and 8.4]{ST}, there are canonical $\mathbb{A}^{1}$-weak equivalences
 
\begin{center}
\begin{equation*}
   \mathcal{GW}^{j} \cong
   \begin{cases}
     \mathbb{Z} \times OGr & \text{if } j \equiv 0 \text{ mod } 4 \\
     Sp/GL & \text{if } j \equiv 1 \text{ mod } 4 \\
     \mathbb{Z} \times HGr & \text{if } j \equiv 2 \text{ mod } 4 \\
     O/GL & \text{if } j \equiv 3 \text{ mod } 4
   \end{cases}
\end{equation*}
\end{center}
 
and
 
\begin{center}
$\mathcal{R}\Omega_{s}^{1} O/GL \cong GL/Sp$,
\end{center}
 
where $OGr$ is an "infinite orthogonal Grassmannian" and $HGr$ is an "infinite symplectic Grassmannian". As a consequence of all this, there is an isomorphism $[X, GL/Sp]_{\mathbb{A}^{1}} = GW_{1}^{3} (R)$. It is argued in \cite{AF2} that the morphisms of schemes $GL_{2n} \rightarrow A_{2n}$, $M \mapsto M^{t} \psi_{2n} M$ induce an isomorphism $GL/Sp \cong A$ of Nisnevich sheaves, where $A = colim_{n} A_{2n}$ (the transition maps are given by adding $\psi_{2}$). Altogether, we obtain an isomorphism $[X, A]_{\mathbb{A}^{1}} = GW_{1}^{3} (R)$ and $[X, A]_{\mathbb{A}^{1}}$ is precisely $A(R)/{\sim} = W'_E (R)$.\\
We describe an action of $\mathbb{G}_{m}$ on $GL/Sp$. For any ring $R$ and any unit $u \in R^{\times}$, we denote by $\gamma_{2n,u}$ the invertible $2n \times 2n$-matrix inductively defined by
 
\begin{center}
$\gamma_{2, u} =
\begin{pmatrix}
u & 0 \\
0 & 1
\end{pmatrix}
$
\end{center}
 
and $\gamma_{2n+2, u} = \gamma_{2n, u} \perp \gamma_{2, u}$. Conjugation by $\gamma_{2n, u}^{-1}$ induces an action of $\mathbb{G}_{m}$ on $GL_{2n}$ for all $n$. As $Sp_{2n}$ is preserved by this action, there is an induced action on $GL_{2n}/Sp_{2n}$. Since all the morphisms $GL_{2n}/Sp_{2n} \rightarrow GL_{2n+2}/Sp_{2n+2}$ are equivariant for this action, we obtain an action of $\mathbb{G}_{m}$ on $GL/Sp$.\\
Using the isomorphism $GL/Sp \xrightarrow{\sim} A$ described above, there is an induced action of $R^{\times} = \mathbb{G}_{m} (R)$ on $GW_{1}^{3} (R) \cong A(R)/{\sim} = W'_E (R)$ for any smooth $k$-algebra $R$ by taking $\mathbb{A}^{1}$-homotopy classes of morphisms. If $M \in GL_{2n} (R)$ represents a morphism $Spec (R) \rightarrow GL_{2n}$ and $u$ is a unit of $R$, note that the conjugation of $M$ by $\gamma_{2n, u}^{-1}$ is sent via the morphism $GL_{2n} \rightarrow GL_{2n}/Sp_{2n} \xrightarrow{\cong} A_{2n}$ to
 
\begin{center}
$\gamma_{2n, u}^{-1} M^{t} \gamma_{2n, u} \psi_{2n} \gamma_{2n, u} M \gamma_{2n, u}^{-1} = \gamma_{2n, u}^{-1} M^{t}(u \cdot \psi_{2n}) M \gamma_{2n, u}^{-1}$.
\end{center}
 
Note that the isometry induced by the matrix $\gamma_{2n, u}$ yields an equality
 
\begin{center}
$[R^{2n}, \psi_{2n}, \gamma_{2n, u}^{-1} M^{t}(u \cdot \psi_{2n}) M \gamma_{2n, u}^{-1}] = [R^{2n}, u \cdot \psi_{2n}, M^{t}(u \cdot \psi_{2n}) M]$
\end{center}
 
in $V (R)$. As a consequence, the action of $R^{\times}$ on $GW_{1}^{3} (R)$ can be described via the isomorphism $GW_{1}^{3} (R) \cong V (R)$ as follows: If $(P, g, f)$ is a triple as in the definition of the group $V (R)$ and $u \in R^{\times}$, then the action is given by
 
\begin{center}
$(u, (P, g, f)) \mapsto (P, u \cdot g, u \cdot f)$.
\end{center}
 
Following \cite{AF2}, we refer to this action as the conjugation action of $R^{\times}$ on $GW_{1}^{3} (R) \cong V (R)$. Recall the we have already defined an action of $R^{\times}$ on $V (R)$ for any commutative ring $R$ in Section \ref{3.2}. The conjugation action is thus a homotopy-theoretic interpretation of this action in case of a smooth algebra over a perfect field of characteristic $\neq 2$.\\
Now let us examine the product map
 
\begin{center}
$GW_{0}^{0} (R) \times GW_{1}^{3} (R) \rightarrow GW_{1}^{3} (R)$
\end{center}
 
for smooth $k$-algebras. As described above, there is a canonical isomorphism $GW_{0}^{0} = K_{0} O (R) = GW (R)$, where $GW (R)$ is the Grothendieck completion of the abelian monoid of non-degenerate symmetric bilinear forms over $R$. Furthermore, there is a canonical map

\begin{center}
$R^{\times} \rightarrow GW (R)$, $u \mapsto (R \times R \rightarrow R, (x,y) \mapsto uxy)$,
\end{center}

which induces an action of $R^{\times} = \mathbb{G}_{m} (R)$ on $GW_{1}^{3} (R)$ via the product map mentioned above. Again following \cite{AF2}, we refer to this action as the multiplicative action of $R^{\times} = \mathbb{G}_{m} (R)$ on $GW_{1}^{3} (R) \cong V (R)$. It follows from the proof of \cite[Proposition 3.5.1]{AF1} that the multiplicative action coincides with the conjugation action. Therefore we obtain another interpretation of the $R^{\times}$-action on $V (R)$ given in Section \ref{3.2} via the multiplicative structure of higher Grothendieck-Witt groups.

\section{Main results}\label{Main results}\label{4}

We finally give the definition of the generalized Vaserstein symbol in this section. As a first step, we recall the definition of the usual Vaserstein symbol introduced in \cite{SV} and reinterpret it by means of the isomorphism $W'_{E} (R) \cong V (R)$ discussed in the previous section. Then we define the generalized symbol and study its basic properties. In particular, we find criteria for the generalized Vaserstein symbol to be injective and surjective (onto the subgroup $\tilde{V} (R)$ of $V (R)$ corresponding to $W_{E} (R)$), which are the natural generalizations of the criteria found in \cite[Theorem 5.2]{SV}. These criteria will enable us to prove that the generalized Vaserstein symbol is a bijection e.g. for $2$-dimensional regular Noetherian rings and for $3$-dimensional regular affine algebras over algebraically closed fields such that $6 \in k^{\times}$.

\subsection{The Vaserstein symbol for unimodular rows}\label{The Vaserstein symbol for unimodular rows}\label{4.1}

For the rest of the section, let $R$ be a commutative ring. Let $\mathit{Um}_{3}(R)$ be its set of unimodular rows of length $3$, i.e. triples $a=(a_1,a_2,a_3)$ of elements in $R$ such that there are elements $b_1,b_2,b_3 \in R$ with $\sum_{i=1}^{3} a_i b_i = 1$. This data determines an exact sequence of the form

\begin{center}
$0 \rightarrow P(a) \rightarrow R^3 \xrightarrow{a} R \rightarrow 0$,
\end{center}

\noindent where $P(a) = \ker (a)$. The triple $b=(b_1,b_2,b_3)$ gives a section to the epimorphism $a:R^3 \rightarrow R$ and induces a retraction $r: R^3 \rightarrow P(a), e_{i} \mapsto e_{i} - a_{i} b$, where $e_{1} = (1,0,0)$, $e_{2} = (0,1,0)$ and $e_{3} = (0,0,1)$. One then obtains an isomorphism $i=r+a: R^3 \rightarrow P(a) \oplus R$, which induces an isomorphism $\det (R^{3}) \rightarrow \det (P(a) \oplus R)$. Finally, by composing with the canonical isomorphisms $\det (P(a) \oplus R) \cong \det (P(a))$ and $R \rightarrow \det (R^{3}), 1 \mapsto e_{1} \wedge e_{2} \wedge e_{3}$, one obtains an isomorphism $\theta: R \rightarrow \det (P(a))$.\\
The matrix

\begin{center}
$V (a,b) = \begin{pmatrix}
0 & - a_1 & - a_2 & - a_3 \\
a_1 & 0 & - b_3 & b_2 \\
a_2 & b_3 & 0 & - b_1 \\
a_3 & - b_2 & b_1 & 0
\end{pmatrix}$
\end{center}

\noindent has Pfaffian $1$ and its image in $W_E (R)$ does not depend on the choice of the section $b$. We therefore obtain a well-defined map $V: \mathit{Um}_{3}(R) \rightarrow W_E (R)$ called the Vaserstein symbol.\\
Now let us reinterpret the Vaserstein symbol map in light of the isomorphism $W'_E (R) \cong V(R)$ discussed in Section \ref{3.2}. The symbol $V(a)$ is sent to the element of $V (R)$ represented by the isometry class $[R^4, \psi_4, V (a,b)]$. If we denote by $\chi_a$ the alternating form $P (a) \times P (a) \rightarrow R, (p,q) \mapsto \theta^{-1} (p \wedge q)$, we obtain an alternating form on $R^4$ given by ${(i \oplus 1)}^{t} (\chi_a \perp \psi_2) {(i \oplus 1)}$. If we set
 
\begin{center}
$\sigma = \begin{pmatrix}
0 & 0 & 0 & - 1 \\
1 & 0 & 0 & 0 \\
0 & 1 & 0 & 0 \\
0 & 0 & 1 & 0
\end{pmatrix} \in E_4 (R)$,
\end{center}
 
then one can check that the form ${(i \oplus 1)}^{t} (\chi_a \perp \psi_2) {(i \oplus 1)}$ is given by the matrix $\sigma^{t} {V (a, b)}^{t} \sigma$. In particular, if we let $M: \mathit{Um}_{3}(R) \rightarrow \mathit{Um}_{3}(R)$ be the map which sends a unimodular row $a = (a_1 , a_2, a_3)$ to $M (a) = (-a_1, -a_2, -a_3)$, then the map $\nu \circ {V} \circ M$ is given by $a \mapsto [R^4, \psi_4, {(i \oplus 1)}^{t} (\chi_a \perp \psi_2) {(i \oplus 1)}]$. Since both $M$ and $\nu$ are bijections, these considerations lead to a generalization of the Vaserstein symbol.

\subsection{The generalized Vaserstein symbol}\label{The generalized Vaserstein symbol}

Now let $P_0$ be a projective $R$-module of rank $2$. We will use the notation of Section \ref{2.2}. For all $n \geq 3$, let $P_n = P_0 \oplus R e_3 \oplus ... \oplus R e_n$ be the direct sum of $P_0$ and free $R$-modules $R e_i$, $3 \leq i \leq n$, of rank $1$ with explicit generators $e_i$. We will sometimes omit these explicit generators in the notation. We denote by $\pi_{k, n}: P_n \rightarrow R$ the projections onto the free direct summands of rank $1$ with index $k = 3, ...,n$.\\
We assume that $P_0$ admits a trivialization $\theta_{0}: R \rightarrow \det(P_0)$ of its determinant. Then we denote by $\chi_0$ the non-degenerate alternating form on $P_0$ given by $P_0 \times P_0 \rightarrow R, (p,q) \mapsto \theta_{0}^{-1} (p \wedge q)$.\\
Now let $Um (P_0 \oplus R)$ be the set of epimorphisms $P_0 \oplus R \rightarrow R$. Any element $a$ of $Um (P_0 \oplus R)$ gives rise to an exact sequence of the form

\begin{center}
$0 \rightarrow P(a) \rightarrow P_0 \oplus R \xrightarrow{a} R \rightarrow 0$,
\end{center}

\noindent where $P(a) = \ker (a)$. Any section $s: R \rightarrow P_0 \oplus R$ of $a$ determines a canonical retraction $r: P_0 \oplus R \rightarrow P(a)$ given by $r(p)= p - s a(p)$ and an isomorphism $i: P_0 \oplus R \rightarrow P(a) \oplus R$ given by $i(p)  = a(p) + r(p)$.\\
The exact sequence above yields an isomorphism $\det(P_0) \cong \det (P(a))$ and therefore an isomorphism $\theta : R \rightarrow \det (P(a))$ obtained by composing with $\theta_0$. We denote by $\chi_a$ the non-degenerate alternating form on $P(a)$ given by $P(a) \times P(a) \rightarrow R, (p,q) \mapsto \theta^{-1} (p \wedge q)$.\\
We now want to define the generalized Vaserstein symbol
 
\begin{center}
$V_{\theta_{0}}: Um (P_0 \oplus R) \rightarrow V (R)$
\end{center}
 
associated to $P_{0}$ and the fixed trivialization $\theta_{0}$ of $\det (P_{0})$ by

\begin{center}
$V_{\theta_{0}} (a) = [P_0 \oplus R^2, \chi_0 \perp \psi_2, {(i \oplus 1)}^{t} (\chi_a \perp \psi_2) {(i \oplus 1)}]$.
\end{center}

If there is no ambiguity, we will usually suppress the fixed trivialization $\theta_{0}$ and denote $V_{\theta_{0}}$ simply by $V$ in order to simplify our notation. In order to prove that this generalized symbol is well-defined, one has to show that our definition is independent of a section of $a$:

\begin{Thm}\label{T4.1}
The generalized Vaserstein symbol is well-defined, i.e. the element $V (a)$ defined as above is independent of the choice of a section of $a$.
\end{Thm}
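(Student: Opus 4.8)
The plan is to pin down exactly which part of the construction depends on the section and to show that changing the section alters the representing triple only by an isometry followed by an elementary automorphism, which is invisible in $V(R)$ by Corollary \ref{C3.2}. As a preliminary remark, the form $\chi_a$ itself does not depend on the chosen section: the isomorphism $\det(P_0) \cong \det(P(a))$ used to define $\theta$ is the canonical one attached to the exact sequence $0 \to P(a) \to P_0 \oplus R \to R \to 0$ (coming from $\det(P_0 \oplus R) \cong \det(P(a)) \otimes \det(R)$ together with $\det(P_0 \oplus R) \cong \det(P_0) \otimes \det(R)$), so it depends only on $a$. Hence the only section-dependent datum in the formula for $V(a)$ is the isomorphism $i \colon P_0 \oplus R \xrightarrow{\cong} P(a) \oplus R$.

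So I would take two sections $s, s'$ of $a$, with associated retractions $r, r'$ and isomorphisms $i, i'$, and put $t := s - s' \colon R \to P_0 \oplus R$. Since $as = as' = \mathrm{id}_R$ we have $at = 0$, so $t$ factors through $P(a) = \ker(a)$; write $t \colon R \to P(a)$ for the induced map as well. Using the explicit formula $i^{-1}(x,y) = x + s(y)$, a direct computation gives that $\tau := i' i^{-1} \colon P(a) \oplus R \to P(a) \oplus R$ is $\tau(x,y) = (x + t(y), y)$, that is, $\tau$ is the elementary automorphism $\varphi_t$ of $P(a) \oplus R$. Consequently $i' = \tau i$, and therefore $i' \oplus 1 = (\tau \oplus 1)(i \oplus 1)$, where $\tau \oplus 1$ is an elementary automorphism of $P(a) \oplus R^2$.

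Next I would apply Corollary \ref{C3.2} with $P = P(a)$ and $\chi = \chi_a$ to the elementary automorphism $\tau \oplus 1$, obtaining
\[
[P(a) \oplus R^2,\ \chi_a \perp \psi_2,\ (\tau \oplus 1)^t(\chi_a \perp \psi_2)(\tau \oplus 1)] = 0
\]
in $V(R)$. Since an isomorphism of modules induces an isometry between the corresponding pulled-back triples, transporting this equality along $i \oplus 1$ and using $(\tau \oplus 1)(i \oplus 1) = i' \oplus 1$ yields
\[
[P_0 \oplus R^2,\ (i \oplus 1)^t(\chi_a \perp \psi_2)(i \oplus 1),\ (i' \oplus 1)^t(\chi_a \perp \psi_2)(i' \oplus 1)] = 0 .
\]
Finally, the relation $[P, f_0, f_1] + [P, f_1, f_2] = [P, f_0, f_2]$ in $V(R)$, applied with $f_0 = \chi_0 \perp \psi_2$, $f_1 = (i \oplus 1)^t(\chi_a \perp \psi_2)(i \oplus 1)$ and $f_2 = (i' \oplus 1)^t(\chi_a \perp \psi_2)(i' \oplus 1)$, shows that the value of $V(a)$ computed from the section $s'$ equals its value computed from $s$, which is the assertion.

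I expect the only delicate point to be the explicit identification of $i' i^{-1}$ with the elementary automorphism $\varphi_{s - s'}$ — a short but bookkeeping-heavy computation with the formulas for $r$, $r'$, $i$ and $i^{-1}$. Everything else is a formal consequence of the defining relations of $V(R)$ together with Corollary \ref{C3.2}.
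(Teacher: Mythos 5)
Your proposal is correct, and it takes a genuinely different (and in fact shorter) route than the paper. The paper also reduces the statement to Corollary \ref{C3.2}, but it does so by constructing an explicit elementary automorphism $\varphi = id_{P_0\oplus R^2} - d\,e_4$ of the \emph{source} $P_0\oplus R^2$, where $d$ is built from the assignment $p\mapsto s(1)\wedge t(1)\wedge p$ composed with $\theta_0^{-1}$, and then verifying the conjugation identity $\varphi^{t}\,(i_s\oplus 1)^{t}(\chi_a\perp\psi_2)(i_s\oplus 1)\,\varphi = (i_t\oplus 1)^{t}(\chi_a\perp\psi_2)(i_t\oplus 1)$ by a local matrix computation that reduces to the proof of Lemma 5.1 of Suslin--Vaserstein. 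You instead observe that the two isomorphisms differ on the \emph{target} side by $i'i^{-1} = id_{P(a)\oplus R} + t\,\pi_R$ with $t = s - s'\colon R\to P(a)$, which is literally an elementary automorphism of $P(a)\oplus R$; Corollary \ref{C3.2} applied on $P(a)\oplus R^2$, transported along the isometry $i\oplus 1$ and combined with the additivity relation in $V(R)$, then finishes the proof with no local computation at all. The reason the paper cannot argue quite this way on the source is that the corresponding automorphism $i^{-1}i' = id + t\,a$ of $P_0\oplus R$ is a transvection but not an elementary automorphism for the decomposition $P_0\oplus R$, which forces the detour through the $e_4$-summand and the local check; your change of vantage point to $P(a)\oplus R$ makes the key identity tautological. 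Both arguments also need the section-independence of $\chi_a$ itself, which you and the paper justify by essentially the same observation (the difference of two sections lands in $P(a)$, so the induced map on determinants is unchanged). In short: same key lemma, cleaner mechanism; your version trades the paper's explicit wedge-product automorphism and localization argument for a purely formal computation with $i$ and $i'$.
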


\begin{proof}
Let $a \in Um (P_0 \oplus R)$ with two sections $s, t: R \rightarrow P_0 \oplus R$. We denote by $i_s$ and $i_t$ the isomorphisms $P_0 \oplus R \cong P (a) \oplus R$ induced by the sections $s$ and $t$ respectively. Since the isomorphism $\det (P(a)) \cong \det (P_0)$ does not depend on the choice of a section (because the difference of two sections maps $R$ into $P(a)$), the form $\chi_{a}$ is independent of the choice of a section as well. Thus we have to show that the elements $V(a, s) = [P_0 \oplus R^2, (\chi_0 \perp \psi_2), {(i_{s} \oplus 1)}^{t} (\chi_{a} \perp \psi_2) {(i_{s} \oplus 1)}]$ and $V(a, t) = [P_0 \oplus R^2, (\chi_0 \perp \psi_2), {(i_{t} \oplus 1)}^{t} (\chi_{a} \perp \psi_2) {(i_{t} \oplus 1)}]$ are equal in $V (R)$.\\
We do this in the following three steps:\\
 
\begin{itemize}
\item We define a map $d : P_0 \oplus R \rightarrow R$. We get a corresponding automorphism $\varphi \in E (P_{0} \oplus R^2)$ defined by $\varphi = id_{P_0 \oplus R^2} - d e_4$;
\item We show that ${\varphi}^{t} {(i_{s} \oplus 1)}^{t} (\chi_{a} \perp \psi_2) {(i_{s} \oplus 1)} {\varphi} = {(i_{t} \oplus 1)}^{t} (\chi_{a} \perp \psi_2) {(i_{t} \oplus 1)}$;
\item Using Corollary \ref{C3.2}, we conclude that $V (a, s) = V (a, t)$.
\end{itemize}
 
Let us carry out the first step: We first define a map $d': P_0 \oplus R \rightarrow \det (P_0 \oplus R)$ by $p \mapsto s(1) \wedge t(1) \wedge p \in \det (P_0 \oplus R)$. Then $d: P_0 \oplus R \rightarrow R$ is the map obtained from $d'$ by composing with the isomorphisms $\det (P_0 \oplus R) \cong \det (P_0) \cong R$. Let $d_0$ and $d_{R}$ be its restrictions to $P_0$ and $R$ respectively. Furthermore, let $\varphi_0 = id_{P_0 \oplus R^2} - d_0 e_4$ and $\varphi_{R} = id_{P_0 \oplus R^2} - d_{R} e_4$ be the elementary automorphisms of $P_0 \oplus R^2$ defined by $- d_0$ and $- d_{R}$ respectively. Moreover, let $\varphi = id_{P_0 \oplus R^2} - d e_4$. Note that $\varphi = \varphi_0 \varphi_{R} = \varphi_{R} \varphi_0 \in E (P_{0} \oplus R^2)$.\\
Now let us conduct the second step. By Lemma \ref{L2.2}, we can check the desired equality locally. So let $\mathfrak{p}$ be a prime ideal of $R$ and $(e_{1}^{\mathfrak{p}}, e_{2}^{\mathfrak{p}})$ be a basis of the free $R_{\mathfrak{p}}$-module ${(P_0)}_{\mathfrak{p}}$ of rank $2$. We may further assume that ${({\theta}_{0}^{-1})}_{\mathfrak{p}} (e_{1}^{\mathfrak{p}} \wedge e_{2}^{\mathfrak{p}}) = 1$. With respect to the basis $(e_{1}^{\mathfrak{p}}, e_{2}^{\mathfrak{p}}, e_{3})$ of ${(P_0)}_{\mathfrak{p}} \oplus R_{\mathfrak{p}}$, the epimorphism $a_{\mathfrak{p}}$ can be represented by the unimodular row $(a_{1}^{\mathfrak{p}}, a_{2}^{\mathfrak{p}}, a_{3}^{\mathfrak{p}})$ and both sections $s_{\mathfrak{p}}$ and $t_{\mathfrak{p}}$ can be represented by the columns ${(s_{1}^{\mathfrak{p}}, s_{2}^{\mathfrak{p}}, s_{3}^{\mathfrak{p}})}^{t}$ and ${(t_{1}^{\mathfrak{p}}, t_{2}^{\mathfrak{p}}, t_{3}^{\mathfrak{p}})}^{t}$. Using the basis $(e_{1}^{\mathfrak{p}}, e_{2}^{\mathfrak{p}}, e_{3}, e_{4})$ of ${(P_0)}_{\mathfrak{p}} \oplus R_{\mathfrak{p}}^2$, we can check the desired equality locally: If we let $d_{1}^{\mathfrak{p}} = t_{3}^{\mathfrak{p}} s_{2}^{\mathfrak{p}} - t_{2}^{\mathfrak{p}} s_{3}^{\mathfrak{p}}$, $d_{2}^{\mathfrak{p}} = t_{1}^{\mathfrak{p}} s_{3}^{\mathfrak{p}} - t_{3}^{\mathfrak{p}} s_{1}^{\mathfrak{p}}$ and $d_{3}^{\mathfrak{p}} = t_{2}^{\mathfrak{p}} s_{1}^{\mathfrak{p}} - t_{1}^{\mathfrak{p}} s_{2}^{\mathfrak{p}}$ and

\begin{center}
$ M_{\mathfrak{p}} = \begin{pmatrix}
1 & 0 & 0 & 0 \\
0 & 1 & 0 & 0 \\
0 & 0 & 1 & 0 \\
- d_{1}^{\mathfrak{p}} & - d_{2}^{\mathfrak{p}} & -d_{3}^{\mathfrak{p}} & 1
\end{pmatrix}$
,
\end{center}

this amounts to verifying the equality

\begin{center}
$M_{\mathfrak{p}}^{t}
\begin{pmatrix}
0 & s_{3}^{\mathfrak{p}} & - s_{2}^{\mathfrak{p}} & a_{1}^{\mathfrak{p}} \\
- s_{3}^{\mathfrak{p}} & 0 & s_{1}^{\mathfrak{p}} & a_{2}^{\mathfrak{p}} \\
s_{2}^{\mathfrak{p}} & - s_{1}^{\mathfrak{p}} & 0 & a_{3}^{\mathfrak{p}} \\
- a_{1}^{\mathfrak{p}} & - a_{2}^{\mathfrak{p}} & - a_{3}^{\mathfrak{p}} & 0
\end{pmatrix}
M_{\mathfrak{p}}
$
=
$
\begin{pmatrix}
0 & t_{3}^{\mathfrak{p}} & - t_{2}^{\mathfrak{p}} & a_{1}^{\mathfrak{p}} \\
- t_{3}^{\mathfrak{p}} & 0 & t_{1}^{\mathfrak{p}} & a_{2}^{\mathfrak{p}} \\
t_{2}^{\mathfrak{p}} & - t_{1}^{\mathfrak{p}} & 0 & a_{3}^{\mathfrak{p}} \\
- a_{1}^{\mathfrak{p}} & - a_{2}^{\mathfrak{p}} & - a_{3}^{\mathfrak{p}} & 0
\end{pmatrix}
$
\end{center}

But this follows from the proof of \cite[Lemma 5.1]{SV}.\\
Finally, we conclude by Corollary \ref{C3.2}: Since $\varphi_{0}$ and $\varphi_{R}$ are elementary automorphisms of $P_{0} \oplus R^{2}$, the automorphism $\varphi = \varphi_{0} \varphi_{R}$ is an element of $E (P_{0} \oplus R^{2})$. By Corollary \ref{C3.2}, we deduce that
 
\begin{center}
$V (a,s) = [P_{0} \oplus R^{2}, \chi_{0} \perp \psi_{2}, {(i_{s} \oplus 1)}^{t} (\chi_{a} \perp \psi_{2}) {(i_{s} \oplus 1)}] \linebreak = [P_{0} \oplus R^{2}, \chi_{0} \perp \psi_{2}, {\varphi}^{t} {(i_{s} \oplus 1)}^{t} (\chi_{a} \perp \psi_{2}) {(i_{s} \oplus 1)} \varphi]$.
\end{center}
 
But by the second step, we also know that
 
\begin{center}
$[P_{0} \oplus R^{2}, \chi_{0} \perp \psi_{2}, {\varphi}^{t} {(i_{s} \oplus 1)}^{t} (\chi_{a} \perp \psi_{2}) {(i_{s} \oplus 1)} \varphi] \linebreak = [P_{0} \oplus R^{2}, \chi_{0} \perp \psi_{2}, {(i_{t} \oplus 1)}^{t} (\chi_{a} \perp \psi_{2}) {(i_{t} \oplus 1)}] = V (a,t)$.
\end{center}
 
This finishes the proof.
\end{proof}
 
We note that there is a homomorphism $\overline{\mathit{Pf}}: V (R) \rightarrow R^{\times}$ obtained as the composite $V(R) \xrightarrow{\cong} V_{\mathit{free}} (R) \xrightarrow{\xi} W'_E (R) \xrightarrow{\mathit{Pf}} R^{\times}$. We denote its kernel by $\tilde{V} (R)$. Of course, the isomorphism $V (R) \cong W'_E (R)$ induces an isomorphism $\tilde{V} (R) \cong W_E (R)$.\\
As stated in the previous section, the usual Vaserstein symbol of a unimodular row is an element of $W_{E} (R)$ and is invariant under elementary transformations. We will now prove that the analogous statements also hold for the generalized Vaserstein symbol:
 
\begin{Lem}\label{L4.2}
The generalized Vaserstein symbol $V: Um (P_0 \oplus R) \rightarrow V (R)$ maps $Um (P_0 \oplus R)$ into $\tilde{V} (R)$.
\end{Lem}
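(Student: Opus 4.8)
The statement to prove is that $\overline{Pf}(V(a)) = 1$ in $R^{\times}$ for every $a \in Um(P_0 \oplus R)$, which is exactly the assertion that $V(a)$ lies in $V_1(R) = \ker(\overline{Pf})$. The plan is to reduce this to the classical fact that the Vaserstein matrix $V(a,b)$ of a unimodular row of length $3$ has Pfaffian $1$. The point is that the equality $\overline{Pf}(V(a)) = 1$ may be checked locally: a unit $u$ of $R$ with $u_{\mathfrak{p}} = 1$ in every localization $R_{\mathfrak{p}}$ equals $1$, since $u - 1$ is then zero locally everywhere. Moreover, both the homomorphism $\overline{Pf}$ (given by the explicit formulas for $\xi$ and $Pf$) and the formation of $V(a)$ (the module $P(a)$, the form $\chi_a$, the trivialization $\theta$ and the isomorphism $i$ all base change along $R \to R_{\mathfrak{p}}$) are compatible with localization. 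Hence it suffices to show $\overline{Pf}(V(a)_{\mathfrak{p}}) = 1$ for every prime $\mathfrak{p}$ of $R$.

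So fix $\mathfrak{p}$. Then $(P_0)_{\mathfrak{p}}$ is free of rank $2$, and one chooses a basis $(e_1, e_2)$ of $(P_0)_{\mathfrak{p}}$ adapted to the localized trivialization, i.e. with $\theta_0^{-1}(e_1 \wedge e_2) = 1$; such a basis is obtained from an arbitrary one by rescaling a single basis vector by a suitable unit. With respect to such a basis $\chi_0$ is given by the matrix $\psi_2$, so $\chi_0 \perp \psi_2 = \psi_4$, and $V(a)_{\mathfrak{p}}$ becomes precisely the element attached to a unimodular row in Section \ref{The Vaserstein symbol for unimodular rows}: choosing a section $b = (b_1, b_2, b_3)$ of the row $a_{\mathfrak{p}}$, the computation recalled there identifies ${(i \oplus 1)}^{t}(\chi_{a_{\mathfrak{p}}} \perp \psi_2)(i \oplus 1)$ with the matrix $\sigma^{t} {V(a_{\mathfrak{p}}, b)}^{t} \sigma$, so that $V(a)_{\mathfrak{p}} = [R_{\mathfrak{p}}^4, \psi_4, \sigma^{t}{V(a_{\mathfrak{p}}, b)}^{t}\sigma]$. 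Since $\nu$ sends the class of a matrix $N \in A_4(R_{\mathfrak{p}})$ to $[R_{\mathfrak{p}}^4, \psi_4, N]$ and $\xi = \nu^{-1}$, we get $\overline{Pf}([R_{\mathfrak{p}}^4, \psi_4, N]) = Pf(N)$. As $\sigma \in E_4(R_{\mathfrak{p}})$ has determinant $1$ and $V(a_{\mathfrak{p}},b)$ is a $4 \times 4$ skew-symmetric matrix, the Pfaffian identities of Section \ref{3} give $Pf(\sigma^{t}{V(a_{\mathfrak{p}},b)}^{t}\sigma) = \det(\sigma)\,Pf({V(a_{\mathfrak{p}},b)}^{t}) = Pf(V(a_{\mathfrak{p}},b)) = 1$, the last equality being the classical computation of the Pfaffian of the Vaserstein matrix. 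Thus $\overline{Pf}(V(a)_{\mathfrak{p}}) = 1$ for all $\mathfrak{p}$, whence $\overline{Pf}(V(a)) = 1$, i.e. $V(a) \in V_1(R)$.

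The one genuinely delicate step is the local reduction: one must check that after localizing and passing to an adapted basis, the defining triple of $V(a)$ really does coincide with the triple $[R_{\mathfrak{p}}^4, \psi_4, \sigma^{t}{V(a_{\mathfrak{p}},b)}^{t}\sigma]$ of the unimodular-row case — in particular that $\chi_0$ becomes $\psi_2$ on the nose, which is precisely why the local basis must be chosen compatibly with $\theta_0$ rather than arbitrarily. Everything else (compatibility of $\overline{Pf}$ with localization, the Pfaffian formulas, and the local triviality of elements of $R$) is routine. As an alternative to the local argument one could instead proceed globally by identifying $\overline{Pf}([P,g,f])$ with the ratio of the determinant-relative Pfaffians of $f$ and $g$ — with respect to any trivialization of $\det P$ — and observing that both $\chi_0 \perp \psi_2$ and ${(i \oplus 1)}^{t}(\chi_a \perp \psi_2)(i \oplus 1)$ have relative Pfaffian $1$ with respect to the trivialization of $\det(P_0 \oplus R^2)$ induced by $\theta_0$, the latter because $i$, hence $i \oplus 1$, respects the chosen trivializations of determinants by the very definition of $\theta$ and $\chi_a$; in that approach the work shifts to setting up the relative Pfaffian and verifying that it computes $\overline{Pf}$.
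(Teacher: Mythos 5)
Your proof is correct and follows essentially the same route as the paper's: reduce the computation of $\overline{Pf}(V(a))$ to the localizations $V(R)\to V(R_{\mathfrak p})$, pick a local basis of $(P_0)_{\mathfrak p}$ adapted to $\theta_0$ so that $\chi_0\perp\psi_2$ becomes $\psi_4$, and evaluate the Pfaffian of the resulting alternating $4\times 4$ matrix, which is $\sum a_i s_i = 1$. The extra detour through the matrix $\sigma^{t}V(a_{\mathfrak p},b)^{t}\sigma$ and the Pfaffian identities is just a repackaging of that same local computation.
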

 
\begin{proof}
For this, we note that the Pfaffian of an element of $V (R)$ is completely determined by the Pfaffians of all its images under the maps $V (R) \rightarrow V (R_{\mathfrak{p}})$ induced by localization at any prime ideal $\mathfrak{p}$. But the localization ${(P_0)}_\mathfrak{p}$ at any prime $\mathfrak{p}$ is a free $R_\mathfrak{p}$-module of rank $2$; choosing a basis $({e}_{1}^{\mathfrak{p}}, {e}_{2}^{\mathfrak{p}})$ of ${(P_0)}_\mathfrak{p}$ such that ${({\theta}_{0}^{-1})}_{\mathfrak{p}} ({e}_{1}^{\mathfrak{p}} \wedge {e}_{2}^{\mathfrak{p}}) = 1$ as in the proof of Theorem \ref{T4.1}, we may calculate the Pfaffian of any Vaserstein symbol by the usual formula for the Pfaffian of an alternating $4\times4$-matrix. The lemma then follows immediately.
\end{proof}
 
\begin{Thm}\label{T4.3}
Let $\varphi$ be an elementary automorphism of $P_0 \oplus R$. Then we have $V (a) = V (a \varphi)$ for any $a \in Um (P_0 \oplus R)$. In particular, we obtain a well-defined map $V: Um (P_0 \oplus R)/E (P_0 \oplus R) \rightarrow \tilde{V} (R)$.
\end{Thm}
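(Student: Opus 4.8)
\section*{Proof proposal for Theorem \ref{T4.3}}

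The plan is to pick sections on the two sides compatibly, rewrite $V(a\varphi)$ in terms of the data attached to $a$, and then finish with Corollary~\ref{C3.2}. First I would exploit that $\varphi$ restricts to an isomorphism $\bar\varphi\colon P(a\varphi)=\ker(a\varphi)=\varphi^{-1}(\ker a)\xrightarrow{\cong}P(a)$. If $s$ is a section of $a$, then $\varphi^{-1}s$ is a section of $a\varphi$, and by Theorem~\ref{T4.1} I may use it to compute $V(a\varphi)$. A direct computation shows that the retraction $r'$ of $a\varphi$ attached to $\varphi^{-1}s$ equals $\bar\varphi^{-1}\circ r\circ\varphi$, where $r$ is the retraction of $a$ attached to $s$; hence the induced isomorphism $i'\colon P_0\oplus R\xrightarrow{\cong}P(a\varphi)\oplus R$ factors as $i'=(\bar\varphi^{-1}\oplus\mathrm{id}_R)\circ i\circ\varphi$, so that
\[
i'\oplus 1=(\bar\varphi^{-1}\oplus\mathrm{id}_{R^2})\,(i\oplus 1)\,(\varphi\oplus 1).
\]

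Next I would compare the skew-symmetric forms. By Lemma~\ref{L2.11}, $\varphi\in SL(P_0\oplus R)$ acts as the identity on $\det(P_0\oplus R)$. Feeding this into the morphism of short exact sequences
\[
\begin{array}{ccccccccc}
0 & \to & P(a\varphi) & \to & P_0\oplus R & \xrightarrow{a\varphi} & R & \to & 0 \\
  &     & \downarrow\bar\varphi & & \downarrow\varphi & & \| & & \\
0 & \to & P(a) & \to & P_0\oplus R & \xrightarrow{a} & R & \to & 0
\end{array}
\]
and using that $\det$ of a short exact sequence is natural, I obtain that the trivializations $\theta$ of $\det P(a)$ and $\theta'$ of $\det P(a\varphi)$ built from $\theta_0$ satisfy $\theta'=\det(\bar\varphi)^{-1}\circ\theta$. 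Unwinding the definitions of $\chi_a$ and $\chi_{a\varphi}$, this is precisely the statement that $\bar\varphi\colon(P(a\varphi),\chi_{a\varphi})\to(P(a),\chi_a)$ is an isometry, i.e.\ $\bar\varphi^{\,t}\chi_a\bar\varphi=\chi_{a\varphi}$; consequently $(\bar\varphi^{-1}\oplus\mathrm{id}_{R^2})^{t}(\chi_{a\varphi}\perp\psi_2)(\bar\varphi^{-1}\oplus\mathrm{id}_{R^2})=\chi_a\perp\psi_2$. If the functorial bookkeeping of the canonical identifications $\det(P_0\oplus R)\cong\det P(a)\otimes\det R\cong\det P(a)$ turns out to be cumbersome, I would instead verify this isometry after localizing at each prime via Lemma~\ref{L2.2}, which reduces it to the same $4\times 4$ computation already invoked in the proof of Theorem~\ref{T4.1}.

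Putting the two steps together and writing $f=(i\oplus 1)^{t}(\chi_a\perp\psi_2)(i\oplus 1)$, the displayed factorization of $i'\oplus 1$ gives
\[
(i'\oplus 1)^{t}(\chi_{a\varphi}\perp\psi_2)(i'\oplus 1)=(\varphi\oplus 1)^{t}\,f\,(\varphi\oplus 1).
\]
Since $\varphi$ is an elementary automorphism of $P_0\oplus R$, the automorphism $\varphi\oplus 1$ is an elementary automorphism of $P_0\oplus R^2$, so Corollary~\ref{C3.2} yields
\[
V(a\varphi)=[P_0\oplus R^2,\chi_0\perp\psi_2,(\varphi\oplus 1)^{t}f(\varphi\oplus 1)]=[P_0\oplus R^2,\chi_0\perp\psi_2,f]=V(a).
\]
Finally, because $E(P_0\oplus R)$ is generated by elementary automorphisms, $V$ is then constant on every $E(P_0\oplus R)$-orbit of $Um(P_0\oplus R)$; combined with Lemma~\ref{L4.2}, this shows that $V$ descends to a well-defined map $V\colon Um(P_0\oplus R)/E(P_0\oplus R)\to V_1(R)$.

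I expect the only genuine obstacle to be the determinant bookkeeping in the second step, namely pinning down cleanly that $\bar\varphi$ intertwines $\chi_{a\varphi}$ and $\chi_a$; the local verification along the lines of the proof of Theorem~\ref{T4.1} is the safe route there. The section swap, the factorization of $i'$, and the concluding appeal to Corollary~\ref{C3.2} are all formal.
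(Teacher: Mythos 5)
Your proposal is correct and follows essentially the same route as the paper: use $\varphi^{-1}s$ as the section of $a\varphi$, factor the induced isomorphism through $\bar\varphi$, reduce to the isometry $\bar\varphi^{t}\chi_a\bar\varphi=\chi_{a\varphi}$ via Lemma \ref{L2.11}, and conclude with Corollary \ref{C3.2}. The only cosmetic difference is that the paper verifies the isometry by describing $\chi_{a\varphi}(p,q)$ as the image of $p\wedge q\wedge\varphi^{-1}s(1)$ in $\det(P_0\oplus R)\cong R$ rather than by chasing naturality of determinants of short exact sequences, but both arguments rest on the same fact that $\varphi$ acts trivially on $\det(P_0\oplus R)$.
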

 
\begin{proof}
Let $\varphi$ be an elementary automorphism of $P_0 \oplus R$, $a \in Um (P_0 \oplus R)$ and $s: R \rightarrow P_0 \oplus R$ a section of $a$. Then ${\varphi}^{-1} s$ is a section of $a \varphi$. We let $i: P_0 \oplus R \rightarrow P(a) \oplus R$ and $j: P_0 \oplus R \rightarrow P(a \varphi) \oplus R$ be the isomorphisms induced by the sections $s$ and ${\varphi}^{-1} s$. We will show that

\begin{center}
${(\varphi \oplus 1)}^{t} {(i \oplus 1)}^{t} (\chi_a \perp \psi_2) {(i \oplus 1)} {(\varphi \oplus 1)} = {(j \oplus 1)}^{t} (\chi_{(a \varphi)} \perp \psi_2) {(j \oplus 1)}$.
\end{center}

The theorem then follows from Corollary \ref{C3.2}.\\
So let us show the equality above. Directly from the definitions, one checks that $(i \oplus 1) (\varphi \oplus 1) = ((\varphi \oplus 1) \oplus 1) (j \oplus 1)$, where by abuse of notation we understand $\varphi$ as the induced isomorphism $P (a \varphi) \rightarrow P(a)$. Altogether, it only remains to show that ${\varphi}^{t} \chi_a \varphi = \chi_{a \varphi}$.\\
For this, let $(p,q)$ a pair of elements in $P (a \varphi)$; by definition, $\chi_{a \varphi}$ sends these elements to the image of $p \wedge q$ under the isomorphism $\det (P (a \varphi)) \cong R$. This element can also be described as the image of $p \wedge q \wedge {\varphi}^{-1} s(1)$ under the isomorphism $\det (P_0 \oplus R) \cong R$.\\
Analogously, the alternating form ${\varphi}^{t} \chi_a \varphi$ sends $(p,q)$ to the image of the element ${\varphi} (p) \wedge {\varphi} (q) \wedge s(1)$ under the isomorphism $\det (P_0 \oplus R) \cong R$. Therefore Lemma \ref{L2.11} allows us to conclude as desired, which finishes the proof of the theorem.
\end{proof}
 
Note that if we equip the set $Um (P_0 \oplus R)$ with the projection $\pi_{R}: P_0 \oplus R \rightarrow R$ onto $R$ as a basepoint, then the generalized Vaserstein symbol is a map of pointed sets, because $V (\pi_{R}) = [P_0 \oplus R^2, \chi_0 \perp \psi_2, \chi_0 \perp \psi_2] = 0$.\\ \\
Let us briefly discuss how the generalized Vaserstein symbol depends on the choice of the trivialization $\theta_{0}$ of the determinant of $P_{0}$. For this, recall that we have defined an action of $R^{\times}$ on $V (R)$ in Section \ref{3.2}. In case of a smooth algebra over a perfect field of characteristic $\neq 2$, we saw in Section \ref{3.3} that this action can be identified with the multiplicative action induced by a product map in the theory of higher Grothendieck-Witt groups.\\
Now let $P_{0}$ be a projective $R$-module of rank $2$ which admits a trivialization $\theta_{0}$ of its determinant. Furthermore, let $a \in Um (P_{0} \oplus R)$ with section $s$ and let $i, \chi_{0}, \chi_{a}$ as in the definition of the generalized Vaserstein symbol. We consider another trivialization $\theta'_{0}$ of $\det (P_{0})$ and we let $\chi'_{0}$ and $\chi'_{a}$ be the corresponding alternating forms on $P_{0}$ and $P(a)$. Obviously, there is a unit $u \in R^{\times}$ such that $\theta_{0} = u \cdot \theta'_{0}$; in particular, we have $u \cdot \chi_{0} = \chi'_{0}$ and $u \cdot \chi_{a} = \chi'_{a}$. Thus, if we denote the Vaserstein symbol associated to $\theta'_{0}$ by $V_{\theta'_{0}}$, then
 
\begin{center}
$V_{\theta'_{0}} = [P_{0} \oplus R^{2}, (u \cdot \chi_{0}) \perp \psi_{2}, {(i \oplus 1)}^{t} ((u \cdot \chi_{a}) \perp \psi_{2}) (i \oplus 1)]$.
\end{center}
 
Finally, the isometry given by $P_{0} \oplus R^{2} \xrightarrow{id_{P_{0}} \oplus 1 \oplus u} P_{0} \oplus R^{2}$ yields an equality
 
\begin{center}
$[P_{0} \oplus R^{2}, (u \cdot \chi_{0}) \perp \psi_{2}, {(i \oplus 1)}^{t} ((u \cdot \chi_{a}) \perp \psi_{2}) (i \oplus 1)] \linebreak = [P_{0} \oplus R^{2}, u \cdot (\chi_{0} \perp \psi_{2}), u \cdot {(i \oplus 1)}^{t} (\chi_{a} \perp \psi_{2}) (i \oplus 1)]$.
\end{center}
 
Thus, if we denote the Vaserstein symbol associated to $\theta_{0}$ by $V_{\theta_{0}}$, then
 
\begin{center}
$V_{\theta'_{0}} = u \cdot V_{\theta_{0}}$.
\end{center}
 
In particular, the property of the generalized Vaserstein symbol to be injective, surjective or bijective onto $\tilde{V} (R)$ does not depend on the choice of $\theta_{0}$.\\
There is another immediate consequence of this: If we let $P_{0} = R^{2}$ be the free $R$-module of rank $2$ and let $e_{1} = (1,0), e_{2} = (0,1) \in R^{2}$ be the obvious elements, then there is a canonical isomorphism $\theta_{0}: R \xrightarrow{\cong} \det(R^{2})$ given by $1 \mapsto e_{1} \wedge e_{2}$. Then recall that the usual Vaserstein symbol can be described as $V_{\theta_{0}} \circ M$ (up to the identification $W_{E} (R) \cong \tilde{V} (R)$). Now let $a$ be a unimodular row of length $3$ over $R$ with section $b$ and let $V (a,b)$ the associated matrix mentioned in Section \ref{4.1}. By the formula above, it follows that $V_{-\theta_{0}} (a)$ is given by $[R^{4}, -\psi_{4},V(a,b)]$. But the matrix

\begin{center}
$\begin{pmatrix}
1 & 0 & 0 & 0 \\
0 & -1 & 0 & 0 \\
0 & 0 & 1 & 0 \\
0 & 0 & 0 & -1
\end{pmatrix}$
\end{center}

lies in $E_{4} (R)$ and gives an isometry between $\psi_{4}$ and $-\psi_{4}$. Hence the generalized Vaserstein symbol $V_{-\theta_{0}}$ associated to the trivialization $-\theta_{0}$ coincides with the usual Vaserstein symbol via the identification $\tilde{V} (R) \cong W_{E} (R)$ mentioned above.

\subsection{Criteria for the surjectivity and injectivity of the generalized Vaserstein symbol}\label{Criteria for the surjectivity and injectivity of the generalized Vaserstein symbol}

The main purpose of this section is to find some criteria for the generalized Vaserstein symbol to be surjective onto $\tilde{V} (R)$ or injective. We have already seen that these properties are independent of the choice of a trivialization of $\det (P_{0})$. So let us fix such a trivialization $\theta_{0}: R \xrightarrow{\cong} \det (P_{0})$.\\
Recall that a unimodular row of length $n$ is an $n$-tuple $a=(a_1,...,a_n)$ of elements in $R$ such that there are elements $b_1,...,b_n \in R$ with $\sum_{i=1}^{n} a_i b_i = 1$. We denote by $\mathit{Um}_{n}(R)$ the set of unimodular rows of length $n$. For any $n \geq 3$, there are obvious maps $U_{n}: \mathit{Um}_{n-2}(R) \rightarrow Um (P_n)$.\\
As a first step towards our criterion for the surjectivity of the generalized Vaserstein symbol (cf. Theorem \ref{T4.5} below), we prove the following statement:
 
\begin{Lem}\label{L4.4}
Any element of the form $[P_{4}, \chi_{0} \perp \psi_{2}, \chi] \in \tilde{V} (R)$ for a non-degenerate alternating form $\chi$ on $P_{4}$ is in the image of the generalized Vaserstein symbol.
\end{Lem}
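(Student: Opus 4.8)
The plan is to exhibit the class $[P_{4}, \chi_{0} \perp \psi_{2}, \chi]$ as $V(a)$ for an explicit $a \in Um(P_{0} \oplus R)$. The natural candidate is $a := \chi(-, e_{4})|_{P_{3}} \colon P_{3} = P_{0} \oplus Re_{3} \to R$. First I would check that $a$ is an epimorphism: this can be verified locally, and at a prime $\mathfrak{p}$ the form $\chi_{\mathfrak{p}}$ is an invertible alternating matrix, so its $e_{4}$-column is part of a basis of $(P_{4})_{\mathfrak{p}}$ and hence, read in the first three coordinates (the fourth entry being $\chi_{\mathfrak{p}}(e_{4},e_{4}) = 0$), is a unimodular row. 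Thus $a \in Um(P_{0} \oplus R)$. Fixing a section $s$ of $a$, one obtains the retraction $r$, the isomorphism $i \colon P_{3} \xrightarrow{\cong} P(a) \oplus R$, the trivialization $\theta$ of $\det P(a)$ and the form $\chi_{a}$ on $P(a)$, so that by definition $V(a) = [P_{4}, \chi_{0} \perp \psi_{2}, \widetilde{\chi}]$ with $\widetilde{\chi} := (i \oplus 1)^{t} (\chi_{a} \perp \psi_{2}) (i \oplus 1)$.

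The core of the argument is the comparison of $\widetilde{\chi}$ and $\chi$ with respect to the decomposition $P_{4} = P(a) \oplus Rs(1) \oplus Re_{4}$ induced by $i \oplus 1$; here $P(a) = \ker a \subseteq P_{3}$, and $i^{-1}$ carries the free generator of $P(a) \oplus R$ to $s(1)$. Unwinding the definitions one finds that $\widetilde{\chi}$ is the orthogonal sum $\chi_{a} \perp \psi_{2}$ for this decomposition, i.e. $\widetilde{\chi}(q,q') = \chi_{a}(q,q')$ and $\widetilde{\chi}(q,s(1)) = \widetilde{\chi}(q,e_{4}) = 0$ for $q,q' \in P(a)$, while $\widetilde{\chi}(s(1),e_{4}) = 1$ and $\widetilde{\chi}(s(1),s(1)) = \widetilde{\chi}(e_{4},e_{4}) = 0$. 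On the $\chi$ side, $\chi(q,e_{4}) = a(q) = 0$ for $q \in P(a)$, $\chi(s(1),e_{4}) = a(s(1)) = 1$ and $\chi(s(1),s(1)) = \chi(e_{4},e_{4}) = 0$ hold by construction of $a$ and skew-symmetry, and $\chi|_{P(a)}$ is a non-degenerate skew-symmetric form on the rank $2$ module $P(a)$, whose determinant is trivial; hence $\chi|_{P(a)} = u \cdot \chi_{a}$ for a unique $u \in R^{\times}$. A local Pfaffian computation in the spirit of Lemma~\ref{L4.2} shows that $[P_{4}, \chi_{0} \perp \psi_{2}, \chi]$ lies in $V_{1}(R)$ exactly when $u = 1$; since the image of the generalized Vaserstein symbol is contained in $V_{1}(R)$ by Lemma~\ref{L4.2}, we may assume $u = 1$, i.e. $\chi|_{P(a)} = \chi_{a}$. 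Thus $\widetilde{\chi}$ and $\chi$ can differ only on $P(a) \times Rs(1)$: setting $\nu := \chi(-,s(1))|_{P(a)} \in P(a)^{\nu}$, let $q_{0} \in P(a)$ be the unique element with $\chi_{a}(-,q_{0}) = \nu$ and put $\varphi := \mathrm{id}_{P_{4}} + q_{0}\,\pi_{s(1)}$, where $\pi_{s(1)} \colon P_{4} \to R$ is the coordinate functional of $s(1)$. Then $\varphi$ is an elementary automorphism of $P_{4}$ with respect to $P_{4} = P(a) \oplus (Rs(1) \oplus Re_{4})$, and a direct check on the blocks of this decomposition gives $\varphi^{t} \widetilde{\chi} \varphi = \chi$.

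It then remains to apply Lemma~\ref{L3.1} with the decomposition $P_{4} = P(a) \oplus (Rs(1) \oplus Re_{4})$, the skew-symmetric isomorphisms $\chi_{a}$ and $\psi_{2}$ on the two summands and their orthogonal sum $\widetilde{\chi} = \chi_{a} \perp \psi_{2}$: since $\varphi$ belongs to the associated elementary group, $[P_{4}, \widetilde{\chi}, \varphi^{t} \widetilde{\chi} \varphi] = 0$ in $V(R)$, that is $[P_{4}, \widetilde{\chi}, \chi] = 0$. Combining this with the relation $[P_{4}, \chi_{0} \perp \psi_{2}, \widetilde{\chi}] + [P_{4}, \widetilde{\chi}, \chi] = [P_{4}, \chi_{0} \perp \psi_{2}, \chi]$ defining $V(R)$ yields $[P_{4}, \chi_{0} \perp \psi_{2}, \chi] = [P_{4}, \chi_{0} \perp \psi_{2}, \widetilde{\chi}] = V(a)$, which lies in the image of the generalized Vaserstein symbol.

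I expect the two explicit verifications to be the only real work: that $\widetilde{\chi}$ takes the asserted shape in the $P(a)$-adapted decomposition (which rests on the identity $i^{-1}(0,1) = s(1)$) and that $\varphi^{t}\widetilde{\chi}\varphi = \chi$; and, on the side, the local bookkeeping identifying the unit $u$ relating $\chi|_{P(a)}$ and $\chi_{a}$ with the Pfaffian of $\chi$, which licenses the harmless reduction to the case $\chi|_{P(a)} = \chi_{a}$. None of this is conceptually deep, but the signs and the free-generator identification need care.
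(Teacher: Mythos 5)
Your proof is correct, but it takes a genuinely different route from the paper's. The paper also starts from $a=\chi(-,e_4)$, but instead of taking an arbitrary section it \emph{manufactures} one: locally, on a trivializing cover $(f_k)$ with $\theta_0$-adapted bases, the entries of the $3\times 3$ block of $\chi_{f_k}$ are read off as a triple $(s_1^{f_k},s_2^{f_k},s_3^{f_k})$ which, by the Pfaffian identity $a_1s_1+a_2s_2+a_3s_3=\mathrm{Pf}(\chi_{f_k})=1$, is a section of $a_{f_k}$; these local sections glue to a global section $s$ for which $(i\oplus 1)^{t}(\chi_a\perp\psi_2)(i\oplus 1)$ equals $\chi$ on the nose (checked locally via Lemma \ref{L2.2}), so no correcting automorphism is needed. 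You instead take any section, exhibit both forms in the decomposition $P_4=P(a)\oplus Rs(1)\oplus Re_4$, and absorb the only discrepancy --- the block $\chi(-,s(1))|_{P(a)}$ --- into an elementary transvection $\varphi$, concluding by Lemma \ref{L3.1} and the defining relations of $V(R)$; your block computations and the verification $\varphi^{t}\widetilde{\chi}\varphi=\chi$ are correct. What your route buys is transparency about where the Pfaffian-one condition enters: the paper's proof silently asserts that the localized matrix "has Pfaffian $1$" (equivalently, that the given class lies in $V_1(R)$, which is how the lemma is always applied, e.g.\ in Theorem \ref{T4.5}), whereas you isolate this as the unit $u$ with $\chi|_{P(a)}=u\cdot\chi_a$ and reduce to $u=1$ explicitly. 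What the paper's route buys is brevity --- no correction step --- and, as a byproduct used later in Lemma \ref{L4.13}, the fact that \emph{every} form $\chi$ of this shape is literally of the form $V(a,s)$ for a suitable pair $(a,s)$, not merely equal to some $V(a)$ in $V(R)$.
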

 
\begin{proof}
First of all, we set $a = \chi (-, e_{4}): P_{0} \oplus R e_{3} \rightarrow R$. Since $\chi$ is non-degenerate, there is an element $p \in P_{4}$ such that $\chi (-,p): P_{4} \rightarrow R$ is just $-\pi_{4,4}$. In fact, since $\chi (p,p) = 0$, it immediately follows that $p \in P_{3}$. But then $a(p) = \chi (p, e_{4}) = - \chi (e_{4}, p) = 1$. Hence $p$ defines a section $s: R \rightarrow P_{3}$, $1 \mapsto p$, of $a: P_{0} \oplus Re_{3} \rightarrow R$.\\
The generalized Vaserstein symbol of $a$ may thus be computed by means of this section: As in the definition of the generalized Vaserstein symbol, we obtain an isomorphism $i: P_{0} \oplus R \rightarrow P (a) \oplus R$ and an alternating form $\chi_{a}$ on $P (a) = \ker (a)$ induced by $a$ and its section $s$. The generalized Vaserstein symbol of $a$ is then given by $[P_{0} \oplus R^2, \chi_{0} \perp \psi_{2}, {(i \oplus 1)}^{t} (\chi_{a} \perp \psi_{2}) {(i \oplus 1)}]$. But one can check easily that the form ${(i \oplus 1)}^{t} (\chi_{a} \perp \psi_{2}) {(i \oplus 1)}$ locally coincides with $\chi$ by construction. By Lemma \ref{L2.2}, it thus also coincides with $\chi$ globally. Therefore we obtain the desired equality $V (a) = [P_{0} \oplus R^2, \chi_{0} \perp \psi_{2}, \chi]$.
\end{proof}

Using Lemma \ref{L4.4} and the technical lemmas proven in previous sections, we may now prove the following criteria for the surjectivity of the generalized Vaserstein symbol:

\begin{Thm}\label{T4.5}
Let $N \in \mathbb{N}$. Assume that an element $\beta$ of $\tilde{V} (R)$ is of the form $[P_{2N+2}, \chi_0 \perp \psi_{2N}, \chi]$ for some non-degenerate alternating form on $P_{2N+2}$. Moreover, assume that $\pi_{2n+1, 2n+1} (E_{\infty} (P_{0}) \cap Aut (P_{2n+1})) = Um (P_{2n+1})$ for any $n \in \mathbb{N}$ with $1 < n \leq N$. Then $\beta$ lies in the image of the generalized Vaserstein symbol. Thus, the generalized Vaserstein symbol $V: Um (P_0 \oplus R) \rightarrow \tilde{V} (R)$ is surjective if $\pi_{2n+1, 2n+1} (E_{\infty} (P_{0}) \cap Aut (P_{2n+1})) = Um (P_{2n+1})$ for all $n \geq 2$.
\end{Thm}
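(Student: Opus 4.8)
The plan is to prove the conditional statement by induction on $N$ and then to derive the surjectivity assertion by producing, for an arbitrary element of $V_1(R)$, a representative of the required shape. The base case $N = 1$ is immediate, since $\beta = [P_4, \chi_0 \perp \psi_2, \chi]$ is exactly the sort of element treated in Lemma \ref{L4.4} (for $N = 0$ one first replaces $\chi$ by $\chi \perp \psi_2$ using the relations defining $V(R)$).

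For the inductive step, suppose $N > 1$, that $\beta = [P_{2N+2}, \chi_0 \perp \psi_{2N}, \chi]$ lies in $V_1(R)$, and that the orbit condition holds for $1 < n \le N$. First I would apply Lemma \ref{L2.10} with $n = N$ (the case $n = N$ of the hypothesis is available because $1 < N \le N$) to obtain $\varphi \in E_\infty(P_0) \cap Aut(P_{2N+2})$ with $\varphi^t \chi \varphi = \psi \perp \psi_2$ for some non-degenerate skew-symmetric form $\psi$ on $P_{2N}$. The key technical point is then to check that
\[
[P_{2N+2}, \chi_0 \perp \psi_{2N}, \chi] = [P_{2N+2}, \chi_0 \perp \psi_{2N}, \varphi^t \chi \varphi];
\]
because $\varphi$ is only \emph{stably} elementary, Corollary \ref{C3.2} does not apply to $\varphi$ itself. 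The remedy is to choose $k$ with $\varphi \oplus id_{R^{2k}} \in E(P_0 \oplus R^{2N+2k}) = E(P_{2N+2+2k})$, to add the null class $[R^{2k}, \psi_{2k}, \psi_{2k}] = 0$ via the first relation of $V(R)$, thereby rewriting both sides over $P_0 \oplus R^{2N+2k}$ with reference form $\chi_0 \perp \psi_{2(N+k)}$ and target forms $(\varphi \oplus id_{R^{2k}})^t(\chi \perp \psi_{2k})(\varphi \oplus id_{R^{2k}})$ and $\chi \perp \psi_{2k}$, and then to invoke Corollary \ref{C3.2} (with $P = P_0$) to identify these two. Once this equality is in hand, using $\chi_0 \perp \psi_{2N} = (\chi_0 \perp \psi_{2N-2}) \perp \psi_2$ and the first relation of $V(R)$ on $P_{2N+2} = P_{2N} \oplus R^2$ (together with $[R^2, \psi_2, \psi_2] = 0$) gives
\[
\beta = [P_{2N+2}, (\chi_0 \perp \psi_{2N-2}) \perp \psi_2, \psi \perp \psi_2] = [P_{2N}, \chi_0 \perp \psi_{2N-2}, \psi].
\]
Since $2N = 2(N-1) + 2$, the right-hand side has exactly the form required to apply the inductive hypothesis for $N - 1$, whose orbit condition (for $1 < n \le N - 1$) is contained in the one we assumed; hence $\beta$ lies in the image of the generalized Vaserstein symbol.

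For the surjectivity statement, assume the orbit condition for all $n \ge 2$ and let $\beta \in V_1(R)$ be arbitrary. Using the isomorphism $V(R) \cong V_{free}(R)$ I would write $\beta = [R^{2m}, \psi_{2m}, M]$ for some $M \in A_{2m}(R)$. The first relation of $V(R)$, applied to $P_0 \oplus R^{2m}$ and combined with $[P_0, \chi_0, \chi_0] = 0$, then yields
\[
\beta = [P_0, \chi_0, \chi_0] + [R^{2m}, \psi_{2m}, M] = [P_{2m+2}, \chi_0 \perp \psi_{2m}, \chi_0 \perp M],
\]
and since $\chi_0 \perp M$ is a non-degenerate skew-symmetric form on $P_{2m+2}$, this exhibits $\beta$ in the form $[P_{2N+2}, \chi_0 \perp \psi_{2N}, \chi]$ with $N = m$. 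Applying the conditional statement already proved (its hypothesis for $1 < n \le m$ being available) shows $\beta$ is in the image, so the generalized Vaserstein symbol is surjective onto $V_1(R)$.

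I expect the main obstacle to be the absorption of the stably elementary automorphism $\varphi$ in the inductive step: one has to pass to a large enough free stabilization to make $\varphi \oplus id$ genuinely elementary and then keep careful track of which standard form $\chi_0 \perp \psi_{2\bullet}$ is playing the role of the reference form when Corollary \ref{C3.2} — and, underneath it, Lemma \ref{L3.1} — is invoked. The remaining work is routine manipulation with the two defining relations of $V(R)$ and the identity $[P, f, f] = 0$.
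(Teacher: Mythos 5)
Your proof is correct and follows essentially the same route as the paper: iterate Lemma \ref{L2.10} to reduce to a form supported on $P_4$, absorb the resulting automorphism via Corollary \ref{C3.2}, conclude with Lemma \ref{L4.4}, and for surjectivity add the trivial summand $[P_0,\chi_0,\chi_0]$ to a free representative coming from $V_1(R)\cong W_E(R)$. Your explicit stabilization step, handling the fact that Lemma \ref{L2.10} only yields a \emph{stably} elementary automorphism before Corollary \ref{C3.2} can be invoked, is a welcome precision that the paper's own proof glosses over.
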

 
\begin{proof}
By assumption, $\beta \in \tilde{V} (R)$ has the form $\beta = [P_{2N+2}, \chi_0 \perp \psi_{2N}, \chi]$ for some non-degenerate alternating form on $P_{2N+2}$. Furthermore, we may inductively apply Lemma \ref{L2.10} (because of the second assumption) in order to deduce that there is an elementary automorphism $\varphi$ on $P_{2N+2}$ such that ${\varphi}^{t} \chi \varphi = \psi \perp \psi_{2N-2}$ for some non-degenerate alternating form $\psi$ on $P_{4}$. In particular, $\beta = [P_{4}, \chi_{0} \perp \psi_{2}, \psi]$ by Corollary \ref{C3.2}. Finally, any element of this form is in the image of the generalized Vaserstein symbol by Lemma \ref{L4.4}. So $\beta$ is in the image of the generalized Vaserstein symbol.\\
For the last statement, note that any element of $\tilde{V} (R)$ is of the form $[R^{2n}, \psi_{2n}, \chi]$ for some non-degenerate alternating form on $R^{2n}$ (because of the isomorphism $\tilde{V} (R) \cong W_E (R)$). We may then artificially add a trivial summand $[P_0, \chi_{0}, \chi_{0}]$; hence any element of $\tilde{V} (R)$ is of the form $[P_{2n+2}, \chi_0 \perp \psi_{2n}, \chi_0 \perp \chi]$ for some non-degenerate alternating form on $R^{2n}$. We can then conclude by the previous paragraph.
\end{proof}
 
\begin{Thm}\label{T4.6}
Let $N \in \mathbb{N}$. Assume that the following conditions are satisfied:
 
\begin{itemize}
\item Every element of $\tilde{V} (R)$ is of the form $[R^{2N}, \psi_{2N}, \chi]$ for some non-degenerate alternating form on $R^{2N}$
\item One has $\pi_{2n+1, 2n+1} (E_{\infty} (P_{0}) \cap Aut (P_{2n+1})) = Um (P_{2n+1})$ for any $n \in \mathbb{N}$ with $1 < n < N$ and $U_{2N+1} (\mathit{Um}_{2N-1}(R)) \subset \pi_{2N+1, 2N+1} E (P_{2N+1})$
\end{itemize}
 
Then the generalized Vaserstein symbol $V: Um (P_0 \oplus R) \rightarrow \tilde{V} (R)$ is surjective.
\end{Thm}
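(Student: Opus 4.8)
The plan is to imitate the proof of Theorem \ref{T4.5}: starting from $\beta \in V_1 (R)$, represent it by a non-degenerate skew-symmetric form on $P_4$ and invoke Lemma \ref{L4.4}. The only difference is the very first reduction step, which in Theorem \ref{T4.5} relies on $\pi_{2N+1, 2N+1} (E_{\infty} (P_0) \cap Aut (P_{2N+1})) = Um (P_{2N+1})$; here it will be carried out using only the weaker hypothesis $U_{2N+1} (Um_{2N-1} (R)) \subset \pi_{2N+1, 2N+1} E (P_{2N+1})$. Concretely, let $\beta \in V_1 (R)$. By the first assumption we may write $\beta = [R^{2N}, \psi_{2N}, \chi]$ for some non-degenerate skew-symmetric form $\chi$ on $R^{2N}$; identifying $R^{2N}$ with the free summand $R e_3 \oplus \cdots \oplus R e_{2N+2}$ of $P_{2N+2}$ and adding the trivial summand $[P_0, \chi_0, \chi_0] = 0$, we obtain $\beta = [P_{2N+2}, \chi_0 \perp \psi_{2N}, \chi_0 \perp \chi]$.

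First I would perform the top reduction step. Set $d = (\chi_0 \perp \chi) (-, e_{2N+2}) : P_{2N+1} \rightarrow R$. Since $e_{2N+2}$ lies in the free part of $P_{2N+2}$, the restriction of $d$ to $P_0$ vanishes; and since $\chi$ is non-degenerate while $\chi (e_{2N+2}, e_{2N+2}) = 0$, the restriction of $d$ to $R e_3 \oplus \cdots \oplus R e_{2N+1} \cong R^{2N-1}$ is a unimodular row of length $2N-1$. Hence $d$ lies in $U_{2N+1} (Um_{2N-1} (R))$, so by the second assumption there is $\varphi' \in E (P_{2N+1})$ with $d \varphi' = \pi_{2N+1, 2N+1}$. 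Arguing exactly as in the proof of Lemma \ref{L2.10}, the form $\chi' = ({\varphi'}^{t} \oplus 1)(\chi_0 \perp \chi)(\varphi' \oplus 1)$ satisfies ${\chi'} (-, e_{2N+2}) = \pi_{2N+1, 2N+1}$, and conjugating $\chi'$ by the elementary automorphism $\varphi_c = id_{P_{2N+2}} + c\, e_{2N+2}$ with $c = {\chi'} (-, e_{2N+1})$ yields $\varphi_c^{t} {\chi'} \varphi_c = \psi' \perp \psi_2$ for some non-degenerate skew-symmetric form $\psi'$ on $P_{2N}$. Since $\varphi' \oplus 1$ and $\varphi_c$ are elementary automorphisms of $P_{2N+2}$, Corollary \ref{C3.2} together with the orthogonal-sum relation in $V (R)$ gives $\beta = [P_{2N+2}, \chi_0 \perp \psi_{2N}, \psi' \perp \psi_2] = [P_{2N}, \chi_0 \perp \psi_{2N-2}, \psi']$.

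It then remains to iterate and conclude. If $N \geq 2$, I would apply Lemma \ref{L2.10} successively for $n = N-1, N-2, \ldots, 2$ — which is legitimate because $\pi_{2n+1, 2n+1} (E_{\infty} (P_0) \cap Aut (P_{2n+1})) = Um (P_{2n+1})$ holds for all such $n$ by the first part of the second assumption — and, using Corollary \ref{C3.2} after each step, reduce $\beta$ to the form $[P_4, \chi_0 \perp \psi_2, \psi'']$ for some non-degenerate skew-symmetric form $\psi''$ on $P_4$; if $N = 1$, then $\beta = [P_0, \chi_0, \psi']$ already, and adding the trivial summand $[R^2, \psi_2, \psi_2] = 0$ brings it into the same shape. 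By Lemma \ref{L4.4}, every element of the form $[P_4, \chi_0 \perp \psi_2, \psi'']$ lies in the image of the generalized Vaserstein symbol, so $\beta$ does as well, and therefore $V : Um (P_0 \oplus R) \rightarrow V_1 (R)$ is surjective.

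The only genuine obstacle is the observation driving the second paragraph: one must notice that the concrete representative $\chi_0 \perp \chi$ of $\beta$ has its $e_{2N+2}$-column equal to $U_{2N+1}$ of a unimodular row of length $2N-1$ — that is, it annihilates $P_0$ and restricts to an honest unimodular row on the free part — so that the hypothesis $U_{2N+1} (Um_{2N-1} (R)) \subset \pi_{2N+1, 2N+1} E (P_{2N+1})$ can replace full transitivity of $SL$ on $Um (P_{2N+1})$ in the top reduction. Once this is in place, every subsequent reduction is just Lemma \ref{L2.10} applied verbatim, and the endgame is identical to that of Theorem \ref{T4.5}.
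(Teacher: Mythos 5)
Your proposal is correct and follows the paper's own route: represent $\beta$ as $[P_{2N+2},\chi_0\perp\psi_{2N},\chi_0\perp\chi]$, peel off hyperbolic planes via Lemma \ref{L2.10}, and finish with Lemma \ref{L4.4}. The one point the paper leaves implicit --- that the weaker hypothesis $U_{2N+1}(Um_{2N-1}(R))\subset\pi_{2N+1,2N+1}E(P_{2N+1})$ suffices for the top reduction because $(\chi_0\perp\chi)(-,e_{2N+2})$ kills $P_0$ and restricts to a unimodular row of length $2N-1$ on the free part --- is exactly the observation you supply, so your write-up matches the intended argument.
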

 
\begin{proof}
We proceed as in the proof of Theorem \ref{T4.5}: By the first assumption, any element of $\tilde{V} (R)$ is of the form $[R^{2N}, \psi_{2N}, \chi]$ for some non-degenerate alternating form on $R^{2N}$. Again adding a trivial summand $[P_0, \chi_{0}, \chi_{0}]$, we see that any element of $\tilde{V} (R)$ is of the form $[P_{2N+2}, \chi_0 \perp \psi_{2N}, \chi_0 \perp \chi]$ for some non-degenerate alternating form on $R^{2N}$. As in the proof of Theorem \ref{T4.5}, it then follows inductively from Lemma \ref{L2.10} that any element of $\tilde{V} (R)$ is of the form $[P_0 \oplus R^2, \chi_{0} \perp \psi_{2}, \chi]$ for some non-degenerate alternating form $\chi$ on $P_0 \oplus R^2$. The generalized Vaserstein symbol is then surjective by Lemma \ref{L4.4}. Note that the condition $\pi_{2N+1, 2N+1} E (P_{2N+1}) = Um (P_{2N+1})$ can be replaced by the weaker condition $U_{2N+1} (\mathit{Um}_{2N-1}(R)) \subset \pi_{2N+1, 2N+1} E (P_{2N+1})$ in our situation.
\end{proof}
 
\begin{Kor}\label{C4.7}
Assume that the following conditions are satisfied:
 
\begin{itemize}
\item The usual Vaserstein symbol $V: \mathit{Um}_{3}(R) \rightarrow W_{E} (R)$ is surjective
\item $U_{5} (\mathit{Um}_{3}(R)) \subset \pi_{5, 5} (E_{\infty} (P_{0}) \cap Aut (P_{5}))$
\end{itemize}
 
Then the generalized Vaserstein symbol $V: Um (P_0 \oplus R) \rightarrow \tilde{V} (R)$ is surjective.
\end{Kor}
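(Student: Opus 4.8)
The plan is to specialize the argument of Theorem \ref{T4.6} (equivalently, of Theorem \ref{T4.5}) to $N = 2$, after matching the two hypotheses of the corollary with those of Theorem \ref{T4.6}. First I would record the dictionary. The first hypothesis of Theorem \ref{T4.6} for $N = 2$ — that every element of $V_1(R)$ is of the form $[R^{4}, \psi_{4}, \chi]$ for a non-degenerate skew-symmetric form $\chi$ on $R^{4}$ — is equivalent to the surjectivity of the classical Vaserstein symbol: under the isomorphism $W'_E(R) \cong V(R)$ of Section \ref{3.2}, the homomorphism $\nu$ sends $V(a) \in W_E(R)$ to $[R^{4}, \psi_{4}, V(a,b)] \in V_1(R)$ and restricts to an isomorphism $W_E(R) \cong V_1(R)$, so $V \colon Um_3(R) \to W_E(R)$ being onto says exactly that every class in $V_1(R)$ is represented by some $[R^{4}, \psi_{4}, V(a,b)]$. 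Secondly, for $N = 2$ there is no integer $n$ with $1 < n < N$, so the only surviving part of the second hypothesis of Theorem \ref{T4.6} is the inclusion $U_{5}(Um_3(R)) \subset \pi_{5,5}E(P_{5})$; and since the proof of Theorem \ref{T4.6} uses this condition only through Lemma \ref{L2.10}, whose hypothesis is already phrased with $E_\infty(P_0)\cap Aut(P_{2n+1})$, the group $E(P_5)$ here may be enlarged to $E_\infty(P_0)\cap Aut(P_5)$. That is precisely the second hypothesis of the corollary.

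Concretely, I would run the proof as follows. Given $\beta \in V_1(R)$, surjectivity of the classical symbol lets me write $\beta = [R^{4}, \psi_{4}, V(a,b)]$ for some $a \in Um_3(R)$ and some section $b$; adding the trivial class $[P_0, \chi_0, \chi_0] = 0$ rewrites this as $\beta = [P_{6}, \chi_0 \perp \psi_{4},\, \chi_0 \perp V(a,b)]$ with $P_6 = P_0 \oplus R^{4}$. Put $\chi = \chi_0 \perp V(a,b)$ and $d = \chi(-, e_{6}) \colon P_{5} \to R$. Since $\chi$ is non-degenerate and $e_6$ is a basis vector, $d$ is an epimorphism; since $\chi_0$ does not involve $e_6$, the map $d$ vanishes on $P_0$ and on the free summand $R e_3 \oplus R e_4 \oplus R e_5$ is given by the first three entries of the last column of $V(a,b)$, which therefore form a unimodular row of length $3$, so that $d = U_5$ of that row. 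Hence the second hypothesis gives $d \in \pi_{5,5}(E_\infty(P_0)\cap Aut(P_5))$, and carrying out the proof of Lemma \ref{L2.10} with this particular $d$ produces $\varphi \in E_\infty(P_0)\cap Aut(P_6)$ with $\varphi^{t}\chi\varphi = \psi \perp \psi_2$ for some non-degenerate skew-symmetric form $\psi$ on $P_4$. After stabilizing to a large $P_m$ where $\varphi$ becomes genuinely elementary, Corollary \ref{C3.2} (resting on Lemma \ref{L3.1}) gives $\beta = [P_{6}, \chi_0 \perp \psi_4, \varphi^{t}\chi\varphi] = [P_4, \chi_0 \perp \psi_2, \psi]$, which lies in the image of the generalized Vaserstein symbol by Lemma \ref{L4.4}. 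Thus the generalized Vaserstein symbol is surjective onto $V_1(R)$.

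The main obstacle is the single reduction step: the second hypothesis controls the orbit of the basepoint $\pi_{5,5}$ only along the subset $U_5(Um_3(R)) \subset Um(P_5)$, not along all of $Um(P_5)$, so one genuinely has to recognize the map $d$ coming out of $\chi_0 \perp V(a,b)$ as $U_5$ of a length-$3$ unimodular row — this is the one place where the explicit shape of the Vaserstein matrix $V(a,b)$ is used, and it is what prevents a direct appeal to Theorem \ref{T4.5} as stated. The remaining subtlety — that Lemma \ref{L2.10} delivers an automorphism in $E_\infty(P_0)\cap Aut(P_6)$ rather than literally in $E(P_6)$ — is routine and is handled exactly as in the proof of Theorem \ref{T4.5}, by passing to a sufficiently large $P_m$ in which $\varphi$ is elementary before invoking Corollary \ref{C3.2}.
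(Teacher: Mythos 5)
Your proof is correct and takes essentially the same route as the paper, which simply invokes Theorem \ref{T4.6} with $N=2$ after noting that surjectivity of the classical symbol gives the first hypothesis. Your extra unwinding — identifying $d=\chi(-,e_6)$ arising from $\chi_0\perp V(a,b)$ as $U_5$ of a length-$3$ unimodular row so that the weaker hypothesis on $\pi_{5,5}$-orbits suffices — is precisely the content of the remark at the end of the paper's proof of Theorem \ref{T4.6}.
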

 
\begin{proof}
The surjectivity of the usual Vaserstein symbol means that any element of $\tilde{V} (R)$ is of the form $[R^4, \psi_{4}, \chi]$ for some non-degenerate alternating form on $R^4$. Now the corollary follows from Theorem \ref{T4.6}.
\end{proof}
 
In order to prove our criterion for the injectivity of the generalized Vaserstein symbol, we introduce the following condition: We say that $P_0$ satisfies condition $(\ast)$ if $[P_0 \oplus R^2, \chi_0 \perp \psi_2, \chi_1] = [P_0 \oplus R^2, \chi_0 \perp \psi_2, \chi_2] \in \tilde{V}(R)$ for alternating forms $\chi_1 , \chi_2$ on $P_0 \oplus R^2$ implies ${\alpha}^{t} (\chi_1 \perp \psi_{2n}) \alpha = \chi_2 \perp \psi_{2n}$ for some automorphism $\alpha \in E_{\infty} (P_0) \cap Aut (P_{2n+4})$.\\
If $P_0$ is a free $R$-module, condition $(\ast)$ is satisfied, which basically follows from the isomorphism $V (R) \cong W'_E (R)$. Furthermore, using the isomorphisms $V (R) \cong V_{\mathit{free}}(R) \cong W'_E (R)$, we will see that it is possible to prove that condition $(\ast)$ is always satisfied (cf. Lemma \ref{L4.9}).\\
As a first step towards Lemma \ref{L4.9}, we observe:

\begin{Lem}\label{L4.8}
Let $\chi$ be a non-degenerate alternating form on a finitely generated projective $R$-module $P$. Then there exists a finitely generated projective $R$-module $P'$ with a non-degenerate alternating form $\chi'$ on $P'$ and an isomorphism $\tau: R^{2n} \xrightarrow{\cong} P \oplus P'$ such that ${\tau}^{t}(\chi \perp  \chi') \tau = \psi_{2n}$.
\end{Lem}

\begin{proof}
Let $Q$ be a finitely generated projective $R$-module such that $P \oplus Q$ is free. Then, for $Q_{1} = P^{\vee} \oplus Q \oplus Q^{\vee}$, one has $P \oplus Q_{1} \cong R^{2m}$ for some $m \geq 0$. Moreover, for $\phi_{1} = can {\chi}^{-1} \perp H_{Q}$, the form $\chi \perp \phi_{1}$ is hence isometric to a form $\phi_{2}$ on $R^{2m}$. Now let $\phi_{3}$ be a form on $R^{2s}$ for some $s \geq 0$ which represents the inverse of $\phi_{2}$ in $W'_E (R)$. Then $\phi_{2} \perp \phi_{3} \perp \psi_{2t}$ is isometric to $\psi_{2m+2s+2t}$ for some $t \geq 0$. We set $P' = Q_{1} \oplus R^{2s+2t}$ and $\chi' = \phi_{1} \perp \phi_{3} \perp \psi_{2t}$. Then there is an isometry $\tau: R^{2m+2s+2n} \rightarrow P'$ between from $\psi_{2m+2s+2t}$ and $\chi \perp \chi'$, as desired. 
\end{proof}

Using Lemma \ref{L4.8}, we may prove:

\begin{Lem}\label{L4.9}
Any $P_{0}$ satisfies condition $(\ast)$.
\end{Lem}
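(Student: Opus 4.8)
The plan is to move the problem, via the isomorphisms $V(R)\cong V_{free}(R)\cong W'_E(R)$ recalled in Section \ref{3}, down to the level of skew-symmetric matrices — where equality of classes is realised by an honest elementary matrix after stabilisation by copies of $\psi_2$ — and then to carry the resulting elementary isometry back up to an automorphism of a module of the form $P_0\oplus R^N$ that lies in $E_\infty(P_0)$. (The converse implication in condition $(\ast)$ is the easy one: it is immediate from Corollary \ref{C3.2}.)

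So suppose $[P_0\oplus R^2,\chi_0\perp\psi_2,\chi_1]=[P_0\oplus R^2,\chi_0\perp\psi_2,\chi_2]$ in $V(R)$, and pick a finitely generated projective module $Q$ with $(P_0\oplus R^2)\oplus Q\cong R^m$; enlarging $Q$ by a free summand, we may assume $m\equiv 0\pmod 4$, which will be needed below. Applying the explicit isomorphism $V(R)\xrightarrow{\cong}V_{free}(R)$ (the hyperbolic stabilisation, with $g=\chi_0\perp\psi_2$) rewrites the equality as $[L,\Gamma,F_1]=[L,\Gamma,F_2]$ in $V_{free}(R)$, where $L=(P_0\oplus R^2)\oplus(P_0\oplus R^2)^{\nu}\oplus Q\oplus Q^{\nu}$ is free of rank $2m$, $\Gamma=(\chi_0\perp\psi_2)\perp C$, $F_i=\chi_i\perp C$, and $C=can\,(\chi_0\perp\psi_2)^{-1}\perp H_Q$. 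Next I would invoke Lemma \ref{L4.8} for the skew-symmetric isomorphism $\chi_0\perp\psi_2$ on $P_0\oplus R^2$: it provides $\alpha_0\in E(L)$, corresponding to an element of $E_{2m}(R)$ under the canonical identification $L\cong R^{2m}$, with $\alpha_0^{\nu}\Gamma\alpha_0=H_{P_0\oplus R^2}\perp H_Q$. Conjugating by $\alpha_0$, identifying $L\cong R^{2m}$ (so that $H_{P_0\oplus R^2}\perp H_Q$ becomes $h_{2m}$), and using that $h_{2m}=\tau^t\psi_{2m}\tau$ for a permutation matrix $\tau\in E_{2m}(R)$ when $m\equiv 0\pmod 4$, the equality becomes $[R^{2m},\psi_{2m},N_1]=[R^{2m},\psi_{2m},N_2]$ for suitable $N_i\in A_{2m}(R)$. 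By Corollary \ref{C3.3} and the identification $V_{free}(R)\cong W'_E(R)$ this says precisely $N_1\sim N_2$, i.e. there are $s\in\mathbb{N}$ and $E\in E_{4m+2s}(R)$ with $N_1\perp\psi_{2s}=E^t(N_2\perp\psi_{2s})E$.

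The remaining task is to transport this equivalence back. By construction $N_i$ is the matrix of the form $\chi_i\perp C$ on $(P_0\oplus R^2)\oplus M$, with $M=(P_0\oplus R^2)^{\nu}\oplus Q\oplus Q^{\nu}$, transported through $\alpha_0$, the basis identification and $\tau$ — all of which are elementary with respect to the relevant direct-sum decompositions. Unwinding these, the relation $N_1\perp\psi_{2s}=E^t(N_2\perp\psi_{2s})E$ yields an isometry between $\chi_1\perp C\perp\psi_{2s}$ and $\chi_2\perp C\perp\psi_{2s}$ on $(P_0\oplus R^2)\oplus M\oplus R^{2s}$ realised by a product of elementary automorphisms; using the explicit hyperbolic computations preceding Lemma \ref{L4.8} together with the formulas of Lemma \ref{L2.3} and Corollary \ref{C2.4}, one checks that this product lies in $E_\infty(P_0)$. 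It then remains to absorb the fixed auxiliary summand $C\perp\psi_{2s}$: since $C$ is an orthogonal sum of the dual form $can\,(\chi_0\perp\psi_2)^{-1}$ and the hyperbolic form $H_Q$, iterating the normalisation of Lemma \ref{L4.8} and stabilising by further copies of $\psi_2$ shows that, after one more hyperbolic stabilisation, $C\perp\psi_{2s}$ is elementarily isometric to a standard form $\psi_{2t}$ via automorphisms that keep the single summand $P_0$ visible. Feeding this in and invoking Corollary \ref{C3.2}, we obtain for some $k$ an automorphism $\alpha\in E_\infty(P_0)\cap Aut(P_{2k+4})$ with $\alpha^t(\chi_1\perp\psi_{2k})\alpha=\chi_2\perp\psi_{2k}$, which is condition $(\ast)$.

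The step I expect to be the main obstacle is this last transport: making sure the elementary isometry produced over the free module $R^{2m+2s}$ genuinely descends to an element of $E_\infty(P_0)$ and is compatible with splitting off the standard symplectic form. This is exactly what the refined conclusion of Lemma \ref{L4.8} (that the normalising automorphism is \emph{elementary}, not merely an isometry) is designed for, together with the direct-sum bookkeeping of Section \ref{2.2} and a constructive Witt-type cancellation of the auxiliary form $C$; the parity restriction $m\equiv 0\pmod 4$, guaranteeing that $h_{2m}$ and $\psi_{2m}$ differ by an honest elementary matrix, is a minor but essential point.
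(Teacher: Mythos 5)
Your proposal is correct and follows essentially the same route as the paper's own proof (Lemma \ref{L4.10}): pass to $V_{free}(R)\cong W'_E(R)$ via hyperbolic stabilization to produce an elementary matrix realizing the isometry, add one further copy of $(P_0\oplus R^2,\chi_0\perp\psi_2)$ so that Lemma \ref{L4.8} converts the auxiliary summand into a hyperbolic form on a free module, use the divisibility-by-$4$ condition to replace $h_{2m}$ by $\psi_{2m}$ via an even permutation matrix, and check that all the automorphisms involved remain elementary with respect to the decomposition $P_0\oplus R^N$. The only cosmetic difference is that you normalize the reference form to $\psi_{2m}$ before invoking the equivalence in $W'_E(R)$, whereas the paper does so afterwards; the key ingredients and the bookkeeping are identical.
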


\begin{proof}
We prove Lemma \ref{L4.10} below, which obviously implies Lemma \ref{L4.9} for $P = P_{0} \oplus R^{2}$ and $\chi = \chi_{0} \perp \psi_{2}$:
\end{proof}

\begin{Lem}\label{L4.10}
If $[P, \chi, \chi_{1}] = [P, \chi, \chi_{2}] \in V (R)$ for non-degenerate alternating forms $\chi$, $\chi_{1}$ and $\chi_{2}$ on a finitely generated projective $R$-module $P$, then we have an equality ${\alpha}^{t} (\chi_{1} \perp \psi_{2n}) \alpha = \chi_{2} \perp \psi_{2n}$ for some $n \in \mathbb{N}$ and some automorphism $\alpha \in E (P \oplus R^{2n})$.
\end{Lem}

\begin{proof}
The equality $[P, \chi, \chi_{1}] = [P, \chi, \chi_{2}]$ means that $[P, \chi_{1}, \chi_{2}] = 0$. By Lemma \ref{L4.8}, it follows that there is a finitely generated projective $R$-module $P_{1}$ with a non-degenerate alternating form $\chi'$ on $P_{1}$ and, moreover, with an isomorphism $\tau: R^{2m} \xrightarrow{\cong} P \oplus P_{1}$ such that ${\tau}^{t} (\chi_{1} \perp \chi') \tau = \psi_{2m}$. In particular, one has $0 = [P, \chi_{1}, \chi_{2}] = [R^{2m}, \psi_{2m}, {\tau}^{t}(\chi_{2} \perp \chi')\tau] \in V (R)$. Therefore the class of ${\tau}^{t}(\chi_{2} \perp \chi')\tau$ in $W'_E (R)$ is trivial and there exist $u \geq 1$ and $\zeta \in E (R^{2m+2u})$ such that ${\zeta}^{t} (({\tau}^{t} (\chi_{2} \perp \chi') \tau) \perp \psi_{2u}) \zeta = \psi_{2m+2u}$. Note that $\zeta$ lies in the commutator subgroup of $Aut (R^{2m+2u})$.\\
Again by Lemma \ref{L4.8}, there exists a finitely generated projective $R$-module $P_{2}$ with a non-degenerate alternating form $\chi''$ on $P_{2}$ and with an isomorphism $\beta: R^{2v} \xrightarrow{\cong} P_{1} \oplus R^{2u} \oplus P_{2}$ such that ${\beta}^{t} (\chi' \perp \psi_{2u} \perp \chi'') \beta = \psi_{2v}$.\\
But then the composite

\begin{center}
$\xi = (id_{P} \oplus {\beta}^{-1})(\tau \oplus id_{R^{2u}} \oplus id_{P_{2}})({\zeta}^{-1} \oplus id_{P_{2}})({\tau}^{-1} \oplus id_{R^{2u}} \oplus id_{P_{2}})(id_{P} \oplus \beta)$
\end{center}

is an isometry from $\chi_{1} \perp \psi_{2v}$ to $\chi_{2} \perp \psi_{2v}$ and lies in the commutator subgroup of $Aut (P \oplus R^{2v})$ because it is a conjugate of ${\zeta}^{-1} \perp id_{P_{2}}$. In particular, it follows that $\xi \perp id_{R^{2w}} \in E (P \oplus R^{2v+2w})$ for some $w \geq 0$. Finally, if we then set $\alpha = \xi \perp id_{R^{2w}}$ and $n = v+w$, the lemma is proven.
\end{proof}

Now that we have proven that condition $(\ast)$ is always satisfied, we can find conditions which imply that two elements $a,b \in Um (P_{0} \oplus R)$ with the same Vaserstein symbol are equal up to a stably elementary automorphism of $P_{0} \oplus R$. More precisely:

\begin{Thm}\label{T4.11}
Assume that $E (P_{2n}) e_{2n} = (E_{\infty} (P_0) \cap Aut (P_{2n})) e_{2n}$ for $n \geq 2$. Then the equality $V (a) = V (b)$ for $a,b \in Um (P_0 \oplus R)$ implies that $b = a \varphi$ for some $\varphi \in E_{\infty} (P_0) \cap Aut (P_{3})$.
\end{Thm}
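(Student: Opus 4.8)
The approach is to turn the equality $V(a)=V(b)$ into an isometry of non-degenerate skew-symmetric forms on $P_4$, to destabilize that isometry down to rank $4$ by iterating Lemma~\ref{L2.9}, and then to descend one last step to an equality of epimorphisms on $P_3=P_0\oplus R$. Concretely, I would first fix sections $s$ of $a$ and $t$ of $b$, with the induced isomorphisms $i\colon P_0\oplus R\xrightarrow{\cong}P(a)\oplus R$, $j\colon P_0\oplus R\xrightarrow{\cong}P(b)\oplus R$ and forms $\chi_a$, $\chi_b$, so that $V(a)=[P_4,\chi_0\perp\psi_2,\chi_1]$ and $V(b)=[P_4,\chi_0\perp\psi_2,\chi_2]$ with $\chi_1={(i\oplus1)}^{t}(\chi_a\perp\psi_2)(i\oplus1)$ and $\chi_2={(j\oplus1)}^{t}(\chi_b\perp\psi_2)(j\oplus1)$, two non-degenerate skew-symmetric forms on $P_4=P_0\oplus R^2$. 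By Lemma~\ref{L4.9} the module $P_0$ satisfies condition $(\ast)$, so from $V(a)=V(b)$ we obtain an integer $N$ and an automorphism $\alpha\in E_\infty(P_0)\cap Aut(P_{2N+4})$ with ${\alpha}^{t}(\chi_1\perp\psi_{2N})\alpha=\chi_2\perp\psi_{2N}$.

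Next I would destabilize this isometry by applying Lemma~\ref{L2.9} repeatedly, peeling off one copy of $\psi_2$ at each step. The point is to verify its hypothesis: for $2\leq k\leq N+2$ and any non-degenerate skew-symmetric form $\chi$ on $P_{2k}$, the hypothesis of the theorem gives $E(P_{2k})e_{2k}=(E_\infty(P_0)\cap Aut(P_{2k}))e_{2k}$, while Lemma~\ref{L2.8} gives $E(P_{2k})e_{2k}=(E(P_{2k})\cap{Sp}(\chi))e_{2k}$; combining these with the evident inclusions $(E(P_{2k})\cap{Sp}(\chi))e_{2k}\subseteq(E_\infty(P_0)\cap{Sp}(\chi))e_{2k}\subseteq(E_\infty(P_0)\cap Aut(P_{2k}))e_{2k}$ forces $(E_\infty(P_0)\cap Aut(P_{2k}))e_{2k}=(E_\infty(P_0)\cap{Sp}(\chi))e_{2k}$, which is precisely what Lemma~\ref{L2.9} requires. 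After $N$ applications I arrive at $\beta_0\in E_\infty(P_0)\cap Aut(P_4)$ with ${\beta_0}^{t}\chi_1\beta_0=\chi_2$ (replacing $\beta_0$ by its inverse if the parity of $N$ produces the reverse relation).

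It remains to descend from $P_4$ to $P_3$. First I would modify $\beta_0$ so that it fixes $e_4$: by the $k=2$ case of the hypothesis and Lemma~\ref{L2.8} applied to $\chi_1$, one has $\beta_0(e_4)\in(E_\infty(P_0)\cap Aut(P_4))e_4=(E(P_4)\cap{Sp}(\chi_1))e_4$, so there is $\eta\in E(P_4)\cap{Sp}(\chi_1)$ with $\eta(\beta_0(e_4))=e_4$; then $\beta:=\eta\beta_0$ lies in $E_\infty(P_0)\cap Aut(P_4)$, still satisfies ${\beta}^{t}\chi_1\beta=\chi_2$, and fixes $e_4$. Now $\delta:=(i\oplus1)\beta{(j\oplus1)}^{-1}\colon P(b)\oplus R^2\to P(a)\oplus R^2$ is an isometry from $\chi_b\perp\psi_2$ to $\chi_a\perp\psi_2$ that fixes $e_4$. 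Writing $\hat a\colon P(a)\oplus R^2\to R$ for the projection onto the copy of $R$ inside $P(a)\oplus R$, extended by $0$ on the new summand, one checks $\hat a=\pm(\chi_a\perp\psi_2)(-,e_4)$, and similarly for $\hat b$; the isometry property together with $\delta(e_4)=e_4$ then yields $\hat a\circ\delta=\hat b$. Composing this on the right with $j\oplus1$ and using $\hat a\circ(i\oplus1)=a\oplus0$ and $\hat b\circ(j\oplus1)=b\oplus0$ gives $(a\oplus0)\circ\beta=b\oplus0$ as maps $P_4\to R$. Finally, since $\beta$ fixes $e_4$ it has block form $\beta=\varphi_\mu\,(\beta_3\oplus1)$ for an elementary automorphism $\varphi_\mu$ of $P_4=P_3\oplus Re_4$ and some $\beta_3\in Aut(P_3)$; hence $\beta_3\oplus1={\varphi_\mu}^{-1}\beta\in E_\infty(P_0)\cap Aut(P_4)$, so $\beta_3\in E_\infty(P_0)\cap Aut(P_3)$, and restricting $(a\oplus0)\circ\beta=b\oplus0$ to $P_3$ gives $a\circ\beta_3=b$. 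Thus $b=a\varphi$ with $\varphi=\beta_3$.

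I expect the destabilization to be essentially bookkeeping once Lemmas~\ref{L2.8} and~\ref{L2.9} are available, and the genuine work to sit in the last step: extracting the epimorphism-level statement from the purely symplectic one. It is there that the $k=2$ instance of the hypothesis is used — first to arrange $\beta(e_4)=e_4$ (via Lemma~\ref{L2.8}), and then, via the block-triangular normal form, to push the conclusion down from $P_4$ to $P_3$.
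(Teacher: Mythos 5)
Your proposal is correct and follows essentially the same route as the paper: apply condition $(\ast)$ (Lemma \ref{L4.9}), destabilize via Lemma \ref{L2.9} (with the hypothesis of that lemma supplied by combining the theorem's assumption with Lemma \ref{L2.8}, exactly as you do), then use Lemma \ref{L2.8} once more to correct the resulting automorphism of $P_4$ by an element of $E(P_4)\cap Sp(\chi_1)$ so that it fixes $e_4$, and finally descend to $P_3$. Your last step — recovering $a\circ\beta_3=b$ from the isometry by pairing against $e_4$ and reading off the block-triangular form — is just a more explicit rendering of the paper's definition of $\delta$ as the corner of $\gamma^{-1}\beta$.
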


\begin{proof}
Let $a$ and $b$ elements of $Um (P_{0} \oplus R)$ with sections $s$ and $t$ respectively and let $i: P_{0} \oplus R \rightarrow P (a) \oplus R$ and $j: P_{0} \oplus R \rightarrow P (a) \oplus R$ be the isomorphisms induced by these sections. Furthermore, we let $V (a, s) = {(i \oplus 1)}^{t} (\chi_{a} \perp \psi_{2}) {(i \oplus 1)}$ and $V (b, t) = {(j \oplus 1)}^{t} (\chi_{b} \perp \psi_{2}) {(j \oplus 1)}$ be the non-degenerate alternating forms on $P_{0} \oplus R^2$ appearing in the definition of the generalized Vaserstein symbols of $a$ and $b$ respectively. Now assume that $V (a) = V (b)$. Since $P_{0}$ satisfies condition $(\ast)$, there exist $n \in \mathbb{N}$ and an automorphism $\alpha \in E_{\infty} (P_{0}) \cap Aut (P_{2n+4})$ such that ${\alpha}^{t} (V (a, s) \perp \psi_{2n}) \alpha = V (b, t) \perp \psi_{2n}$. Using Lemma \ref{L2.9}, we inductively deduce that ${\beta}^{t} V (a, s) \beta = V (b, t)$ for some $\beta \in E_{\infty} (P_{0}) \cap Aut (P_{0} \oplus R^2)$. Now by Lemma \ref{L2.8} and the second assumption in the theorem, there exists an automorphism $\gamma \in E (P_{0} \oplus R^2) \cap Sp (V (a, s))$ such that $\beta e_{4} = \gamma e_{4}$.\\
We now define $\delta: P_{0} \oplus R \rightarrow P_{0} \oplus R$ as the composite
 
\begin{center}
$P_{0} \oplus R e_{3} \rightarrow P_{0} \oplus R e_{3} \oplus R e_{4} \xrightarrow{{\gamma}^{-1} \beta} P_{0} \oplus R e_{3} \oplus R e_{4} \rightarrow P_{0} \oplus R e_{3}$.
\end{center}
 
One can then check that $\delta$ is an element of $E_{\infty} (P_{0}) \cap Aut (P_{0} \oplus R)$. Moreover, we have
 
\begin{center}
${\beta}^{t} {({\gamma}^{-1})}^{t} V (a, s) {\gamma}^{-1} \beta = V (b, t)$
\end{center}
 
and in particular $a \delta = b$, as desired.
\end{proof}

\begin{Kor}\label{C4.12}
Under the hypotheses of Theorem \ref{T4.11}, furthermore assume that $a (E_{\infty} (P_0) \cap Aut (P_0 \oplus R)) = a E (P_0 \oplus R)$ for all $a \in Um (P_0 \oplus R)$. Then the generalized Vaserstein symbol $V: Um (P_0 \oplus R)/E (P_0 \oplus R) \rightarrow \tilde{V} (R)$ is injective.
\end{Kor}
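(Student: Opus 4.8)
The plan is to deduce this purely formally from Theorem \ref{T4.11} together with the extra orbit-equality hypothesis, so the proof will be short. Recall that $V$ already descends to a well-defined map on $Um (P_0 \oplus R)/E (P_0 \oplus R)$ by Theorem \ref{T4.3}; hence injectivity amounts to showing that whenever $a, b \in Um (P_0 \oplus R)$ satisfy $V (a) = V (b)$, the elements $a$ and $b$ lie in the same $E (P_0 \oplus R)$-orbit.

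First I would note that the standing hypothesis of the corollary is exactly what is required to invoke Theorem \ref{T4.11}, namely the equality $E (P_{2n}) e_{2n} = (E_{\infty} (P_0) \cap Aut (P_{2n})) e_{2n}$ for all $n \geq 2$. Applying Theorem \ref{T4.11}, the equality $V (a) = V (b)$ yields $b = a \varphi$ for some $\varphi \in E_{\infty} (P_0) \cap Aut (P_3)$. Since by definition $P_3 = P_0 \oplus R e_3$, which is canonically $P_0 \oplus R$, we may read $\varphi$ as an element of $E_{\infty} (P_0) \cap Aut (P_0 \oplus R)$, so that $b \in a\,(E_{\infty} (P_0) \cap Aut (P_0 \oplus R))$.

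Next I would apply the additional hypothesis $a\,(E_{\infty} (P_0) \cap Aut (P_0 \oplus R)) = a\,E (P_0 \oplus R)$ for this particular $a$. This immediately gives $b \in a\,E (P_0 \oplus R)$, i.e. $b = a \psi$ for some $\psi \in E (P_0 \oplus R)$; hence $[a] = [b]$ in $Um (P_0 \oplus R)/E (P_0 \oplus R)$, which is precisely the asserted injectivity.

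The only points needing a little care — and none of them is a serious obstacle — are matching the hypothesis of Theorem \ref{T4.11} to the standing hypothesis here, keeping track that $Aut (P_0 \oplus R)$ acts on the right on $Um (P_0 \oplus R)$ by precomposition so that the formula $b = a \varphi$ really does express "same orbit", and the harmless identification $P_3 \cong P_0 \oplus R$. All the genuine content (use of condition $(\ast)$ via Lemma \ref{L4.9}, and of Lemmas \ref{L2.8} and \ref{L2.9}) has already been absorbed into Theorem \ref{T4.11}, so this corollary contributes nothing beyond this orbit bookkeeping.
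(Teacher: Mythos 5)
Your proposal is correct and coincides with the paper's own argument: invoke Theorem \ref{T4.11} to obtain $b = a\varphi'$ with $\varphi' \in E_{\infty}(P_0) \cap Aut(P_0 \oplus R)$, then use the extra orbit-equality hypothesis to replace $\varphi'$ by an element of $E(P_0 \oplus R)$. The identification $P_3 \cong P_0 \oplus R$ and the right-action bookkeeping you mention are exactly the (harmless) points the paper also passes over implicitly.
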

 
\begin{proof}
By Theorem \ref{T4.11}, we have that $V (a) = V (b)$ implies $b = a {\varphi'}$ for some ${\varphi'} \in E_{\infty} (P_0) \cap Aut (P_0 \oplus R)$. Now by the additional assumption, there also exists an elementary automorphism $\varphi$ of $P_0 \oplus R$ such that $b = a \varphi$. So the generalized Vaserstein symbol is injective.
\end{proof}

Regarding the additional assumption in Corollary \ref{C4.12}, it is possible to adapt the arguments in the proof of \cite[Corollary 7.4]{SV} to show that the desired equality $a (E_{\infty} (P_0) \cap Aut (P_0 \oplus R)) = a E (P_0 \oplus R)$ holds for all $a \in Um (P_0 \oplus R)$ if $E_{\infty} (P_{0}) \cap Aut (P_{0} \oplus R^{2}) = E (P_{4})$:

\begin{Lem}\label{L4.13}
If $E_{\infty} (P_{0}) \cap Aut (P_{0} \oplus R^{2}) = E (P_{4})$, then we have an equality $a (E_{\infty} (P_0) \cap Aut (P_0 \oplus R)) = a E (P_0 \oplus R)$ for all $a \in Um (P_0 \oplus R)$.
\end{Lem}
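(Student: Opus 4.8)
The plan is to adapt the argument in the proof of \cite[Corollary 7.4]{SV}.

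As a first step I would cut the problem down to one about $E(P_4)$. If $\varphi' \in E_{\infty}(P_0) \cap Aut(P_0 \oplus R)$, then $\varphi'$ and $\varphi' \oplus id_R$ have the same image in the direct limit $E_{\infty}(P_0)$, and $\varphi' \oplus id_R$ is an automorphism of $P_4 = P_0 \oplus R^2$; hence $\varphi' \oplus id_R \in E_{\infty}(P_0) \cap Aut(P_4)$, which equals $E(P_4)$ by hypothesis. So it suffices to prove: whenever $\varphi' \in Aut(P_0 \oplus R)$ satisfies $\varphi' \oplus id_R \in E(P_4)$, one has $a\varphi' \in a\, E(P_0 \oplus R)$ for every $a \in Um(P_0 \oplus R)$.

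Next I would bring in a section. Fix $a$ and a section $s$ of $a$, and let $i \colon P_0 \oplus R \xrightarrow{\ \cong\ } P(a) \oplus R$ be the induced isomorphism, so that $a = \pi_{R} \circ i$. Transporting $\varphi'$ along $i \oplus id_R$, the assertion becomes the corresponding statement at the basepoint: the stably elementary automorphism $\psi := i\varphi' i^{-1}$ of $P(a) \oplus R$ satisfies $\pi_{R} \circ \psi \in \pi_{R}\, E(P(a) \oplus R)$. The heart of the proof is then an induction on the length of a factorisation of $\varphi' \oplus id_R$ into elementary automorphisms of $P_4$ (available by Corollary \ref{C2.4}), or, equivalently, into elementary transvections of $P_4 = (P_0 \oplus R) \oplus R$ via Theorem \ref{T2.12}. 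Following the scheme of Lemma \ref{L2.8}, at each step one peels off the last factor, corrects it by an automorphism of $P_0 \oplus R$ manufactured from the section $s$ — a construction in the spirit of Lemma \ref{L2.7}, using unimodularity of the extra free coordinate — then conjugates the remaining factors by this correction, which preserves their elementarity by Lemma \ref{L2.3}, and recurses. The corrections accumulated along the way assemble into the required $\varphi \in E(P_0 \oplus R)$ with $a\varphi = a\varphi'$; applying this to all $a$ then yields $a(E_{\infty}(P_0) \cap Aut(P_0 \oplus R)) = a\, E(P_0 \oplus R)$.

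The step I expect to be the main obstacle is controlling the dependence of the elementary subgroup on the chosen direct-sum decomposition. Since $P_0 \oplus R$ carries only a single free summand, $E(P_0 \oplus R)$ is genuinely smaller than $E_{\infty}(P_0) \cap Aut(P_0 \oplus R)$ in general, so one cannot merely restrict elementary automorphisms of $P_4$ to $P_0 \oplus R$; the section of $a$ has to be used precisely to recover the missing free coordinate. Correspondingly, one must verify that transport along $i \oplus id_R$ matches the stable elementary subgroups attached to the two-free-summand decompositions of $P(a) \oplus R^2$ and of $P_0 \oplus R^2$, so that no information about the $E(P_0 \oplus R)$-orbit of $a$ is lost when passing to the basepoint. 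This bookkeeping — rather than any isolated computation — is where the adaptation of \cite[Corollary 7.4]{SV} requires the most care.
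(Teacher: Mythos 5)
Your reduction to $\varphi' \oplus id_R \in E(P_4)$ and the plan to induct over a factorization into the generators of Corollary \ref{C2.4} is exactly how the paper starts, and you correctly locate the difficulty in the generators that mix the extra free coordinate $Re_4$ with $P_0 \oplus Re_3$. But there is a genuine gap at the heart of the induction: you never specify the object the induction actually runs on. An elementary generator $\varphi_g = id_{P_4} + g$ with $g\colon Re_4 \rightarrow Re_3$ does not preserve $P_0 \oplus Re_3$ and does not act on $a \in Um(P_0 \oplus R)$ in any direct sense (extending $a$ by zero to $P_4$ and precomposing with $\varphi_g$ leaves the set of such extensions), so "peeling off the last factor and correcting it" has no well-defined meaning as stated. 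The paper's mechanism is to run the induction on the skew-symmetric form $V(a,s) = (i\oplus 1)^t(\chi_a \perp \psi_2)(i\oplus 1)$ on $P_4$: by the proof of Lemma \ref{L4.4} this form determines $a$ (as $V(a,s)(-,e_4)$ restricted to $P_0\oplus Re_3$) together with a section, $E(P_4)$ acts on forms by $\chi \mapsto \varphi^t\chi\varphi$, and the total transform under $\varphi'\oplus 1$ is $V(a\varphi', s')$. This carrier is the missing idea; without it (or an equivalent device) the inductive scheme you borrow from Lemma \ref{L2.8} does not get off the ground.

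The second missing piece is the actual correction in the one non-trivial case. For $g\colon Re_4 \rightarrow Re_3$ one computes $\varphi_g^t V(a,s)\varphi_g = V(a',s')$ with $a' = (a_0 - g(1)\cdot\chi_0(p),\, a_R)$, where $p = \pi_{P_0}(s(1))$, and one must exhibit $\psi \in E(P_0\oplus R)$ with $a\psi = a'$. The paper does this explicitly: $\psi$ is built from $\psi_0 = id_{P_0} - g(1)\cdot(\pi_{P_0}\circ s)\circ\chi_0(p)$ and $\psi_R = -g(1)\cdot(\pi_R\circ s)\circ\chi_0(p)$, and the point that makes $\psi_0 \oplus id_R$ elementary is $\chi_0(p)\circ\pi_{P_0}\circ s = \chi_0(p,p) = 0$ combined with Whitehead's lemma (Lemma \ref{L2.6}); the identity $a\circ s = id_R$ is what makes $a\psi = a'$. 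Your appeal to "a construction in the spirit of Lemma \ref{L2.7}" gestures in the right direction but this computation is the substance of the lemma and cannot be left implicit. Finally, the detour through $P(a)\oplus R$ via $i$ is unnecessary and is the source of the bookkeeping problem you flag at the end: after conjugating by $i\oplus 1$ the factors are no longer elementary with respect to any decomposition of $P(a)\oplus R^2$ into $P(a)$ plus two free summands, so Corollary \ref{C2.4} no longer applies to them and the word you want to induct on is lost. The paper avoids this entirely by staying with $a$ and the form $V(a,s)$ on $P_0\oplus R^2$.
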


\begin{proof}
Let $a \in Um (P_0 \oplus R)$ with section $s$ and let $\varphi \in E_{\infty} (P_0) \cap Aut (P_0 \oplus R)$. If we let $V (a,s)$ be the alternating form from the definition of the generalized Vaserstein symbol, then it follows from the proof of Lemma \ref{L4.4} that

\begin{center}
${(\varphi \oplus 1)}^{t} V (a,s) {(\varphi \oplus 1)} = V (a', s')$
\end{center}

for some $a' \in Um (P_{0} \oplus R)$ with section $s'$. By assumption, the automorphism $\varphi \oplus 1$ of $P_{4}$ is an elementary automorphism. Moreover, by Corollary \ref{C2.4}, the group $E (P_{4})$ is generated by elementary automorphisms $\varphi_{g} = id_{P_{4}} + g$, where $g$ is a homomorphism

\begin{itemize}
\item[1)] $g: R e_{3} \rightarrow P_{0}$,
\item[2)] $g: P_{0} \rightarrow R e_{3}$,
\item[3)] $g: R e_{3} \rightarrow R e_{4}$ or
\item[4)] $g: R e_{4} \rightarrow R e_{3}$.
\end{itemize}

It therefore suffices to show the following: If $\varphi_{g}^{t} V (a,s) \varphi_{g} = V (a',s')$ for some $g$ as above, then $a' = a \psi$ for some $\psi \in E (P_{0} \oplus R)$. The only non-trivial case is the last one, i.e. if $g$ is a homomorphism $R e_{4} \rightarrow R e_{3}$.\\
So let $g: R e_{4} \rightarrow R e_{3}$ and let $\varphi_{g}$ be the induced elementary automorphism of $P_{4}$. As explained above, we assume that

\begin{center}
$\varphi_{g}^{t} V (a,s) \varphi_{g} = V (a', s')$
\end{center}

for some epimorphism $a': P_{0} \oplus R e_{3} \rightarrow R$ with section $s'$. Write $a = (a_{0}, a_{R})$, where $a_{0}$ and $a_{R}$ are the restrictions of $a$ to $P_{0}$ and $R e_{3}$ respectively. Furthermore, let $p = \pi_{P_{0}} (s (1))$. From now on, we interpret the alternating form $\chi_{0}$ in the definition of the generalized Vaserstein symbol as an alternating isomorphism $\chi_{0}: P \rightarrow P^{\vee}$. Then one can check locally that

\begin{center}
$a' = (a_{0} - g(1) \cdot \chi_{0} (p), a_{R})$.
\end{center}

Let us define an elementary automorphism $\psi$ as follows: We first define an endomorphism of $P_{0}$ by 

\begin{center}
$\psi_{0} = id_{P_{0}} - g(1) \cdot \pi_{P_{0}} \circ s \circ \chi_{0} (p) : P_{0} \rightarrow P_{0}$
\end{center}
 
and we also define a morphism $P_{0} \rightarrow R e_{3}$ by
 
\begin{center}
$\psi_{R} = - g(1) \cdot \pi_{R} \circ s \circ \chi_{0} (p): P_{0} \rightarrow R$.
\end{center}

Then we consider the endomorphism of $P_{0} \oplus R$ given by
 
\begin{center}
$\psi =
\begin{pmatrix}
\psi_{0} & 0 \\
\psi_{R} & id_R
\end{pmatrix}
$.
\end{center}

First of all, this endomorphism coincides up to an elementary automorphism with

\begin{center}
$
\begin{pmatrix}
\psi_{0} & 0 \\
0 & id_R
\end{pmatrix}
$.
\end{center}

Since $\chi_{0} (p) \circ \pi_{P_{0}} \circ s = 0$, this endomorphism is an element of $E (P_{0} \oplus R)$ by Lemma \ref{L2.6}. Hence the same holds for $\psi$. Finally, one can check easily that $a \psi = a'$ by construction.
\end{proof}

As an immediate consequence, we can finally deduce our criterion for the injectivity of the generalized Vaserstein symbol:

\begin{Thm}\label{T4.14}
Assume that $E (P_{2n}) e_{2n} = (E_{\infty} (P_{0}) \cap Aut (P_{2n})) e_{2n}$ for all $n \geq 3$ and $E_{\infty} (P_{0}) \cap Aut (P_{4}) = E (P_{4})$. Then the generalized Vaserstein symbol $V: Um (P_{0} \oplus R)/E (P_{0} \oplus R) \rightarrow \tilde{V} (R)$ is injective.
\end{Thm}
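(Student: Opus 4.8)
The plan is to deduce the theorem directly from Theorem \ref{T4.11}, Corollary \ref{C4.12} and Lemma \ref{L4.13}; all of the substantive work has already been carried out in these results, so what remains is to check that their hypotheses are met.

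First I would observe that the hypotheses of Theorem \ref{T4.11} hold under the assumptions of the present theorem. Indeed, the equality $E (P_{2n}) e_{2n} = (E_{\infty} (P_0) \cap Aut (P_{2n})) e_{2n}$ is assumed outright for all $n \geq 3$, and for $n = 2$ it follows at once from the second assumption $E_{\infty} (P_0) \cap Aut (P_4) = E (P_4)$: once these two groups coincide, a fortiori their orbits of $e_4$ coincide. Hence Theorem \ref{T4.11} is applicable, and $V (a) = V (b)$ for $a, b \in Um (P_0 \oplus R)$ forces $b = a \varphi$ for some $\varphi \in E_{\infty} (P_0) \cap Aut (P_3) = E_{\infty} (P_0) \cap Aut (P_0 \oplus R)$.

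Next, I would verify the additional hypothesis needed for Corollary \ref{C4.12}, namely that $a (E_{\infty} (P_0) \cap Aut (P_0 \oplus R)) = a E (P_0 \oplus R)$ for every $a \in Um (P_0 \oplus R)$. Since $P_4 = P_0 \oplus R^2$, the second assumption of the theorem is exactly the hypothesis $E_{\infty} (P_0) \cap Aut (P_0 \oplus R^2) = E (P_4)$ of Lemma \ref{L4.13}; that lemma therefore applies and yields precisely this equality. Combining this with the previous paragraph, one may replace the $\varphi \in E_{\infty} (P_0) \cap Aut (P_0 \oplus R)$ produced above by an elementary automorphism of $P_0 \oplus R$, so that $[a] = [b]$ in the orbit space; equivalently, Corollary \ref{C4.12} applies verbatim and gives that $V: Um (P_0 \oplus R)/E (P_0 \oplus R) \rightarrow V_1 (R)$ is injective, which is the assertion.

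The only point requiring attention is the bookkeeping in the index $n$: Theorem \ref{T4.11} needs the orbit equality starting from $n = 2$, while the present theorem only postulates it from $n = 3$ on, and this gap is filled precisely by the $P_4$-hypothesis. So there is essentially no obstacle at this stage; the real difficulty in this circle of ideas is concentrated in the earlier results — Theorem \ref{T4.11} and Lemma \ref{L4.13} — which themselves rest on condition $(\ast)$, on Lemmas \ref{L2.8} and \ref{L2.9}, and on the structure theory of $V (R)$.
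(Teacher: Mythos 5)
Your proposal is correct and follows exactly the paper's route: the paper's proof is literally "Combine Corollary \ref{C4.12} and Lemma \ref{L4.13}." Your additional observation that the $n=2$ orbit equality needed for Theorem \ref{T4.11} is supplied by the hypothesis $E_{\infty}(P_{0}) \cap Aut(P_{4}) = E(P_{4})$ is a useful piece of bookkeeping that the paper leaves implicit.
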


\begin{proof}
Combine Corollary \ref{C4.12} and Lemma \ref{L4.13}.
\end{proof}

\subsection{The bijectivity of the generalized Vaserstein symbol in dimension 2 and 3}

Let us now study the criteria for the surjectivity and injectivity of the generalized Vaserstein symbol found in this section. In \cite{HB} the conditions of Theorem \ref{T4.5} and Theorem \ref{T4.14} are studied in a very general framework. If $R$ is a Noetherian ring of Krull dimension $d$, it follows from \cite[Chapter IV, Theorem 3.4]{HB} that actually $Unim.El. (P_{n}) = E (P_{n}) e_{n}$ for all $n \geq d+2$ (or $Um (P_{n}) = \pi_{n,n} E (P_{n})$ for all $n \geq d+2$). In particular, if $\dim (R) \leq 4$, then the generalized Vaserstein symbol is injective as soon as $E_{\infty} (P_{0}) \cap Aut (P_{4}) = E (P_{4})$; if $\dim (R) \leq 3$, it is surjective. Hence the following results are immediate consequences of our stability results in Section \ref{2.3}:

\begin{Thm}\label{T4.15}
Assume that $R$ is either a regular Noetherian ring of dimension $2$ or a regular affine algebra of dimension $3$ over a perfect field $k$ such that $c.d.(k) \leq 1$ and moreover $6 \in k^{\times}$. Then the generalized Vaserstein symbol $V: Um (P_{0} \oplus R)/E (P_{0} \oplus R) \rightarrow \tilde{V} (R)$ is a bijection.
\end{Thm}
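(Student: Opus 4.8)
The plan is to reduce the statement, via the criteria of Theorems \ref{T4.5} and \ref{T4.14} together with classical stability, to the single equality $E_{\infty} (P_{0}) \cap Aut (P_{4}) = E (P_{4})$, and then to read that equality off from the stability results of Section \ref{2.3}. First I would record that in both cases $R$ is Noetherian of Krull dimension $d \leq 3$, so by \cite[Chapter IV, Theorem 3.4]{HB} one has $Um (P_{n}) = \pi_{n,n} E (P_{n})$ and $Unim.El. (P_{n}) = E (P_{n}) e_{n}$ for all $n \geq d+2$, hence a fortiori for all $n \geq 5$. Since $E (P_{m}) \subseteq E_{\infty} (P_{0}) \cap Aut (P_{m})$, each of these equalities is squeezed between the corresponding orbit of $E(P_m)$ and the full set $Um(P_m)$ (resp. $Unim.El.(P_m)$), so it forces $\pi_{2n+1,2n+1} (E_{\infty} (P_{0}) \cap Aut (P_{2n+1})) = Um (P_{2n+1})$ for all $n \geq 2$ and $E (P_{2n}) e_{2n} = (E_{\infty} (P_{0}) \cap Aut (P_{2n})) e_{2n}$ for all $n \geq 3$. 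Theorem \ref{T4.5} then already gives surjectivity of the generalized Vaserstein symbol, and Theorem \ref{T4.14} reduces its injectivity precisely to $E_{\infty} (P_{0}) \cap Aut (P_{4}) = E (P_{4})$.

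It remains to establish this last equality in each case. Since $P_{0}$ has rank $2$, if $R$ is a $2$-dimensional regular Noetherian ring the equality is exactly Theorem \ref{T2.15}. If instead $R$ is a $3$-dimensional regular affine $k$-algebra with $c.d.(k) \leq 1$ and $6 \in k^{\times}$, then $3! = 6 \in k^{\times}$, so by the consequence of \cite{B} recalled in Section \ref{2.3} the field $k$ satisfies property $\mathcal{P} (4,3)$; hence Theorem \ref{T2.16} applies and yields $E_{\infty} (P_{0}) \cap Aut (P_{4}) = E (P_{4})$. Combining surjectivity and injectivity, the generalized Vaserstein symbol is a bijection in both cases.

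The argument has no genuine obstacle of its own: all the substance sits in the inputs, namely the construction of the generalized symbol and its surjectivity/injectivity criteria (Section \ref{4}), Bass's stability theorem, and above all the stability result $E_{\infty} (P_{0}) \cap Aut (P_{4}) = E (P_{4})$, which is the part actually powered by the local-global principle for transvection groups (Theorems \ref{T2.14}, \ref{T2.15}, \ref{T2.16}). The only place where a little care is needed is the index bookkeeping: one must check that the dimension bound $d \leq 3$ puts Bass's stability range $n \geq d+2$ below the ranges $2n+1 \geq 5$ (for surjectivity via Theorem \ref{T4.5}) and $2n \geq 6$ (for injectivity via Theorem \ref{T4.14}), and that the rank-$2$ hypothesis on $P_{0}$ is genuinely in force so that Theorems \ref{T2.15} and \ref{T2.16} are applicable.
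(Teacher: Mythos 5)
Your proposal is correct and follows essentially the same route as the paper: reduce everything via Bass's stability theorem (\cite[Chapter IV, Theorem 3.4]{HB}) and the sandwich $E(P_m) \subseteq E_{\infty}(P_0) \cap Aut(P_m)$ to the criteria of Theorems \ref{T4.5} and \ref{T4.14}, and then supply the remaining equality $E_{\infty}(P_0) \cap Aut(P_4) = E(P_4)$ from Theorems \ref{T2.15} and \ref{T2.16}, using that $6 = 3! \in k^{\times}$ and $c.d.(k) \leq 1$ give property $\mathcal{P}(4,3)$. The index bookkeeping you flag is exactly the point the paper also checks, and it works out as you describe.
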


\begin{Thm}\label{T4.16}
Let $R$ be a $4$-dimensional regular affine algebra over a perfect field $k$ satisfying the property $\mathcal{P} (5,3)$ (cf. Section \ref{2.3}). Then the generalized Vaserstein symbol $V: Um (P_{0} \oplus R)/E (P_{0} \oplus R) \rightarrow \tilde{V} (R)$ is injective.
\end{Thm}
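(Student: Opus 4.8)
The plan is to deduce the statement directly from the injectivity criterion of Theorem \ref{T4.14}, whose two hypotheses I will verify in turn for a $4$-dimensional regular affine $k$-algebra $R$ satisfying $\mathcal{P}(5,3)$.

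First I would check the stability hypothesis $E(P_{2n}) e_{2n} = (E_{\infty}(P_0) \cap Aut(P_{2n})) e_{2n}$ for all $n \geq 3$. Since $\dim(R) = 4$, the result cited from \cite[Chapter IV, Theorem 3.4]{HB} yields $Unim.El.(P_m) = E(P_m) e_m$ for all $m \geq \dim(R) + 2 = 6$; in particular this applies to every $m = 2n$ with $n \geq 3$. One then squeezes: $E(P_{2n}) e_{2n} \subseteq (E_{\infty}(P_0) \cap Aut(P_{2n})) e_{2n} \subseteq Unim.El.(P_{2n}) = E(P_{2n}) e_{2n}$, the first inclusion because $E(P_{2n}) \subseteq E_{\infty}(P_0) \cap Aut(P_{2n})$ and the second because automorphisms of $P_{2n}$ carry the unimodular element $e_{2n}$ to unimodular elements. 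Hence equality holds throughout, so the first hypothesis of Theorem \ref{T4.14} is met for every $n \geq 3$.

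Next I would verify the second hypothesis, $E_{\infty}(P_0) \cap Aut(P_4) = E(P_4)$. This is exactly the content of Theorem \ref{T2.17}: it requires $P_0$ of rank $2$ (assumed throughout Section \ref{4}), $R$ a regular affine $k$-algebra of dimension $4$, and the property $\mathcal{P}(5,3)$ for $k$ — all of which are among the hypotheses here. Thus both hypotheses of Theorem \ref{T4.14} are satisfied, and the generalized Vaserstein symbol $V : Um(P_0 \oplus R)/E(P_0 \oplus R) \rightarrow V_1(R)$ is injective.

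The only genuine work is hidden in Theorem \ref{T2.17}, i.e. in Theorem \ref{T2.14}, which rests on the local-global principle for transvection groups (Theorem \ref{T2.12}, \cite{BBR}) together with Vorst's theorem that $SL_N(R_{\mathfrak{p}}[X]) = E_N(R_{\mathfrak{p}}[X])$ for regular $R$; the bookkeeping in the present proof — matching the exact stability index $n \geq 3$ against the bound $m \geq \dim(R)+2$ — is routine once that input is available. The point to watch is that for $\dim(R) = 4$ the range $m \geq 6$ is precisely the edge needed for the hypothesis of Theorem \ref{T4.14} (which concerns the $P_{2n}$, $n \geq 3$), so there is no slack: this argument does not extend to give surjectivity in dimension $4$, only injectivity.
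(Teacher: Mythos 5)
Your proof is correct and follows exactly the route the paper intends: the first hypothesis of Theorem \ref{T4.14} is extracted from Bass's stability theorem \cite[Chapter IV, Theorem 3.4]{HB} via the same squeeze between $E(P_{2n})e_{2n}$ and $Unim.El.(P_{2n})$ for $2n \geq \dim(R)+2 = 6$, and the second hypothesis is precisely Theorem \ref{T2.17}. Your closing remark about the argument not yielding surjectivity in dimension $4$ also matches the paper's discussion preceding the theorem.
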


Because of the pointed surjection $Um (P_{0} \oplus R)/ E (P_{0} \oplus R) \rightarrow \phi_{2}^{-1} ([P_{0} \oplus R])$, the bijectivity of the generalized Vaserstein symbol always gives rise to a surjection $W_E (R) \rightarrow \phi_{2}^{-1} ([P_{0} \oplus R])$; in this case, it seems that the group structure of $W_E (R) \cong Um (P_{0} \oplus R)/ E (P_{0} \oplus R)$ essentially governs the structure of the fiber $\phi_{2}^{-1} ([P_{0} \oplus R])$.\\
The following application follows - to some degree - the pattern of the proof of \cite[Theorem 7.5]{FRS} and illustrates the previous paragraph:
 
\begin{Thm}\label{T4.17}
Let $R$ be a ring and $P_0$ be a projective $R$-module of rank $2$ which admits a trivialization $\theta_{0}$ of its determinant. Assume the following conditions are satisfied:
\begin{itemize}
\item[a)] The generalized Vaserstein symbol $V: Um (P_{0} \oplus R)/ E (P_{0} \oplus R) \rightarrow \tilde{V} (R)$ induced by $\theta_{0}$ is a bijection;
\item[b)] $2 V (a_{0}, a_{R}) = V (a_{0}, a_{R}^{2})$ for $(a_{0}, a_{R}) \in Um (P_{0} \oplus R)$;
\item[c)] The group $W_E (R)$ is $2$-divisible.
\end{itemize}
 
Then $\phi_{2}^{-1} ([P_{0} \oplus R])$ is trivial.
\end{Thm}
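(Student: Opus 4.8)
The plan is to exploit the pointed surjection $Um (P_{0} \oplus R)/E (P_{0} \oplus R) \twoheadrightarrow \phi_{2}^{-1} ([P_{0} \oplus R])$, $[a] \mapsto [ker (a)]$, constructed in Section \ref{2.4}: since it is surjective and sends the basepoint $[\pi_{R}]$ to the basepoint $[P_{0}]$, it suffices to prove that $ker (a) \cong P_{0}$ for every $a \in Um (P_{0} \oplus R)$.

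The first — and essentially only substantial — step is to show that every element $\beta$ of $V_{1} (R)$ can be written as $\beta = V (a_{0}, a_{R}^{2})$ for a suitable $(a_{0}, a_{R}) \in Um (P_{0} \oplus R)$; note that if $(a_{0}, a_{R})$ is unimodular then so is $(a_{0}, a_{R}^{2})$, since $x + \lambda a_{R} = 1$ with $x \in a_{0} (P_{0})$ forces $x (x + 2 \lambda a_{R}) + \lambda^{2} a_{R}^{2} = 1 \in a_{0} (P_{0}) + (a_{R}^{2})$. To obtain such a presentation I would first use hypothesis (c) together with the isomorphism $V_{1} (R) \cong W_{E} (R)$ to write $\beta = 2 \gamma$ with $\gamma \in V_{1} (R)$; then use the surjectivity contained in hypothesis (a) to write $\gamma = V (b_{0}, b_{R})$ with $(b_{0}, b_{R}) \in Um (P_{0} \oplus R)$; and finally apply hypothesis (b) to get $V (b_{0}, b_{R}^{2}) = 2 V (b_{0}, b_{R}) = 2 \gamma = \beta$, so that $(a_{0}, a_{R}) = (b_{0}, b_{R})$ does the job.

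Next I would invoke the generalized Krusemeyer completion (Proposition \ref{P4.18}): every epimorphism of the special shape $(a_{0}, a_{R}^{2}) : P_{0} \oplus R \to R$ lies in $\pi_{R} Aut (P_{0} \oplus R)$, hence $ker (a_{0}, a_{R}^{2}) \cong ker (\pi_{R}) = P_{0}$. Now, given an arbitrary $a \in Um (P_{0} \oplus R)$, I would apply the previous step to $\beta = V (a)$ to get $(a_{0}, a_{R})$ with $V (a_{0}, a_{R}^{2}) = V (a)$; the injectivity contained in hypothesis (a) then yields $a \varphi = (a_{0}, a_{R}^{2})$ for some $\varphi \in E (P_{0} \oplus R)$, whence $ker (a) \cong ker (a \varphi) = ker (a_{0}, a_{R}^{2}) \cong P_{0}$. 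By the reduction of the first paragraph, $\phi_{2}^{-1} ([P_{0} \oplus R])$ is then trivial.

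The hard part is not in this assembly at all — it is Proposition \ref{P4.18}, i.e.\ writing down an explicit automorphism of $P_{0} \oplus R$ completing $(a_{0}, a_{R}^{2})$; this is the twisted analogue of the classical completion of unimodular rows of the form $(a_{1}, a_{2}, a_{3}^{2})$ due to Suslin and made explicit by Krusemeyer. Everything else above is a formal manipulation of the bijection in (a), the $2$-divisibility in (c), the doubling identity in (b), and the dictionary of Section \ref{2.4}.
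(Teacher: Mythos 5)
Your proposal is correct and follows essentially the same route as the paper: use $2$-divisibility of $W_E(R)\cong V_1(R)$ and the surjectivity in (a) to write any symbol as $2V(b_0,b_R)=V(b_0,b_R^2)$, use injectivity to conclude every $a$ is elementarily equivalent to an epimorphism of the form $(b_0,b_R^2)$, and then invoke the Krusemeyer-type completability of such epimorphisms to trivialize the fiber via the dictionary of Section \ref{2.4}. The only cosmetic difference is that the paper starts from a module $P'$ with $P'\oplus R\cong P_0\oplus R$ and chases its class, whereas you phrase it as showing $\ker(a)\cong P_0$ for all $a$; these are interchangeable.
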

 
\begin{proof}
Assume $P' \oplus R \cong P_{0} \oplus R$. As we have seen in Section \ref{2.4}, $P'$ has an associated element of $Um (P_{0} \oplus R)/ Aut (P_{0} \oplus R)$. We lift this element to an element $[b]$ of $Um (P_{0} \oplus R)/ E (P_{0} \oplus R)$ ($[b]$ denotes the class of $b \in Um (P_{0} \oplus R)$). Since the generalized Vaserstein symbol is a bijection and $W_E (R)$ is a $2$-divisible group by assumption, we get that $[b] = 2 [a]$, where $[a]$ denotes the class of an element $a = (a_{0}, a_{R})$ of $Um (P_{0} \oplus R)$ in the orbit space $Um (P_{0} \oplus R)/ E (P_{0} \oplus R)$. But then the second assumption shows that $2 [a] = [(a_{0}, a_{R}^{2})]$. It follows from \cite[Proposition 2.7]{B} or \cite[Lemma 2]{S1} that any element of $Um (P_{0} \oplus R)$ of the form $(a_{0}, a_{R}^{2})$ is completable to an automorphism of $P_{0} \oplus R$, i.e. $\pi_{R} = a \varphi$ for some automorphism $\varphi$ of $P_{0} \oplus R$. Altogether, $\pi_{R}$ and $b$ therefore lie in the same orbit under the action of $Aut (P_{0} \oplus R)$ and hence $P' \cong P$. Thus, $\phi_{2}^{-1} ([P_{0} \oplus R])$ is trivial.
\end{proof}
 
As mentioned in the proof of Theorem \ref{T4.16}, any element $a \in Um (P_{0} \oplus R)$ of the form $a = (a_{0},a_{R}^{2})$ is completable to an automorphism of $P_{0} \oplus R$. This follows directly from \cite[Proposition 2.7]{B} or \cite[Lemma 2]{S1}, because $P_{0}$ has a trivial determinant. We now construct a more concrete completion of $a = (a_{0},a_{R}^{2})$. For this, let us first look at the case $P_{0} \cong R^{2}$: If $(b, c, a^2)$ is a unimodular row and $qb + rc + ap = 1$, then it follows from \cite{Kr} that the matrix
 
\begin{center}
$ 
\begin{pmatrix}
-p-qr & q^{2} & -c+2aq \\
-r^{2} & -p+qr & b+2ar \\
b & c & a^{2}
\end{pmatrix}
$
\end{center}
 
is a completion of $(b, c, a^2)$ with determinant $1$. We observe that
 
\begin{center}
$ 
\begin{pmatrix}
-qr & q^{2} \\
-r^{2} & qr
\end{pmatrix}
$
=
$ 
\begin{pmatrix}
q \\
r
\end{pmatrix}
$
$ 
\begin{pmatrix}
-r & q
\end{pmatrix}
$
\end{center}
 
and also
 
\begin{center}
$ 
\begin{pmatrix}
-c \\
b
\end{pmatrix}
$
=
$ 
\begin{pmatrix}
0 & -1\\
1 & 0
\end{pmatrix}
$
$ 
\begin{pmatrix}
b \\
c
\end{pmatrix}
$.
\end{center}

This shows how to generalize the construction of this explicit completion. We denote by $\chi_{0}: P_{0} \rightarrow P_{0}^{\vee}$ the alternating isomorphism from the definition of the generalized Vaserstein symbol (we now interpret it as an alternating isomorphism and not as a non-degenerate alternating form). If $a = (a_{0}, a_{R})$ is an element of $Um (P_{0} \oplus R)$ with a section $s$ uniquely given by the element $s (1) = (q, p) \in P_{0} \oplus R$, we consider the following morphisms: We define an endomorphism of $P_0$ by
 
\begin{center}
$\varphi_{0} = -(\pi_{P_{0}} s) \circ \chi_{0} (q) - p \cdot id_{P_{0}} : P_{0} \rightarrow P_{0}$
\end{center}
 
and we also define a morphism $R \rightarrow P_{0}$ by
 
\begin{center}
$\varphi_{R}: R \rightarrow P_{0}$, $1 \mapsto 2 a_{R}(1) \cdot q + \chi_{0}^{-1} (a_{0})$.
\end{center}
 
Then we consider the endomorphism of $\varphi: P_{0} \oplus R$ given by
 
\begin{center}
$ 
\begin{pmatrix}
\varphi_{0} & \varphi_{R} \\
a_{0} & a_{R}^{2}
\end{pmatrix}
$.
\end{center}

Essentially by construction, $\varphi$ is a completion of $(a_{0},a_{R}^{2})$:
 
\begin{Prop}\label{P4.18}
The endomorphism $\varphi$ of $P_{0} \oplus R$ defined above is an automorphism of $P_{0} \oplus R$ of determinant $1$ such that $\pi_{R} \varphi = (a_{0}, a_{R}^{2})$.
\end{Prop}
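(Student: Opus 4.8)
The plan is to treat the two claims about $\varphi$ separately. The relation $\pi_R\varphi=(a_0,a_R^2)$ is immediate from the very definition of $\varphi$: the second row of the $2\times 2$ block matrix describing $\varphi$ is exactly the pair $(a_0,a_R^2)$, so post-composing $\varphi$ with the projection $\pi_R\colon P_0\oplus R\to R$ returns it. The substantive claim is that $\varphi$ is an automorphism with $\det\varphi=1$, and here I would argue locally. Since $P_0\oplus R$ is finitely generated projective of constant rank $3$ and $\det(P_0)$ is trivialized, the determinant $\det\varphi$ is a well-defined element of $R$; and both the assertion $\det\varphi=1$ (a local property of elements of $R$, in the spirit of the proof of Lemma \ref{L2.2}) and the assertion that $\varphi$ is an isomorphism can be checked after localizing at every prime $\mathfrak p$ of $R$ — indeed, once $\det\varphi$ is seen locally to be a unit, $\varphi$ is automatically an automorphism, and Lemma \ref{L2.11} then records that such a $\varphi$ acts as the identity on $\det(P_0\oplus R)$.

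Next I would fix a prime $\mathfrak p$ and choose, exactly as in the proof of Theorem \ref{T4.1}, a basis $(e_1^{\mathfrak p},e_2^{\mathfrak p})$ of the free module $(P_0)_{\mathfrak p}$ with $(\theta_0^{-1})_{\mathfrak p}(e_1^{\mathfrak p}\wedge e_2^{\mathfrak p})=1$. With respect to the basis $(e_1^{\mathfrak p},e_2^{\mathfrak p},e_3)$ of $(P_0)_{\mathfrak p}\oplus R_{\mathfrak p}$, the morphism $\chi_0$ becomes the standard symplectic isomorphism, the epimorphism $a_{\mathfrak p}$ is represented by a unimodular row $(b,c,a_R)$, and the section $s_{\mathfrak p}$ by a column $(q_1,q_2,p)^{t}$ with $bq_1+cq_2+a_R p=1$. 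I would then unwind the coordinate-free definitions of $\varphi_0$ and $\varphi_R$ in this basis: the endomorphism $(\pi_{P_0}s)\circ\chi_0(q)$ of $(P_0)_{\mathfrak p}$ is the rank-one operator obtained by multiplying the column $(q_1,q_2)^{t}$ on the right by the row $(-q_2,\ q_1)$, and $\chi_0^{-1}(a_0)$ is the column obtained by applying the matrix of $\chi_0^{-1}$ to $(b,c)^{t}$ — these are precisely the two matrix identities recorded just before the proposition. Plugging these in shows that the matrix of $\varphi_{\mathfrak p}$ with respect to $(e_1^{\mathfrak p},e_2^{\mathfrak p},e_3)$ is exactly Krusemeyer's explicit completion of the unimodular row $(b,c,a_R^2)$.

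Finally I would invoke the fact, recalled from \cite{Kr}, that Krusemeyer's $3\times 3$ matrix has determinant $1$; hence $\det\varphi_{\mathfrak p}=1$ for every prime $\mathfrak p$, so $\det\varphi=1$, $\varphi$ is an automorphism, and $\pi_R\varphi=(a_0,a_R^2)$, as claimed. The step I expect to be the main obstacle is the middle one: the careful translation of the intrinsic recipe for $\varphi$ (built out of $\chi_0$, $\chi_0^{-1}$, the section $s$ and the components $a_0$, $a_R$) into a local basis and the verification, signs included, that it reproduces Krusemeyer's matrix — in particular one has to be consistent about the sign convention identifying the form $\chi_0$ with the isomorphism $\chi_0\colon P_0\to P_0^{\nu}$ so that the off-diagonal blocks match. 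Everything else reduces to this computation together with the cited property of Krusemeyer's completion and the standard local-to-global principles already used in the paper.
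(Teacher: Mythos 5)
Your proposal is correct and follows essentially the same route as the paper: the relation $\pi_R\varphi=(a_0,a_R^2)$ is read off from the definition, and the automorphism/determinant claim is checked locally by choosing a basis of $(P_0)_{\mathfrak p}$ adapted to $\theta_0$ and identifying the resulting matrix with Krusemeyer's explicit completion, which has determinant $1$. The paper's proof is just a terser version of exactly this argument.
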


\begin{proof}
Choosing locally a free basis $(e_{1}^{\mathfrak{p}}, e_{2}^{\mathfrak{p}})$ of ${(P_{0})}_{\mathfrak{p}}$ at any prime $\mathfrak{p}$ such that ${(\theta_{0}^{-1})}_{\mathfrak{p}} (e_{1}^{\mathfrak{p}} \wedge e_{2}^{\mathfrak{p}}) = 1$, we can check locally that this endomorphism is an automorphism of determinant $1$ (because locally it coincides with the completion given in \cite{Kr}); by definition, we also have $\pi_{R} \varphi = (a_{0}, a_{R}^{2})$. Thus, $\varphi$ has the desired properties and generalizes the explicit completion given in \cite{Kr}.
\end{proof}

\end{document}